\numberwithin{figure}{section}
\begin{document}

\theoremstyle{plain}
\newtheorem{theorem}{Theorem}[section]
\newtheorem{lemma}{Lemma}[section]
\newtheorem{proposition}[theorem]{Proposition}
\newtheorem{corollary}[theorem]{Corollary}
\newtheorem{observation}{Observation}
\newtheorem{claim}{Claim}
\newtheorem{claim4}{Claim}[claim]

\theoremstyle{definition}
\newtheorem{definition}{Definition}[section]
\newtheorem{mydef}{Definition}[section]
\newtheorem{example}[theorem]{Example}
\newtheorem{conjecture}[theorem]{Conjecture}
\newtheorem{question}[theorem]{Question}

\newcommand{\remark}{\medskip\par\noindent {\bf Remark.~~}}
\newcommand{\pp}{{\it p.}}
\newcommand{\de}{\em}

\renewcommand{\theenumi}{Case \arabic{enumi}}
\renewcommand{\theenumii}{Subcase \arabic{enumi}.\arabic{enumii}}
\renewcommand{\labelenumii}{\theenumii.}

\title{Exact Tur\'{a}n numbers of two vertex-disjoint paths\footnote{Supported in part by Science and Technology Commission of Shanghai Municipality (No. 22DZ2229014) and National Natural Science Foundation of China (Nos. 12271169 and 12331014).}}
\author{Miao Dong\footnote{School of Mathematical Sciences, East China Normal University, Shanghai, 200241, China. Email:mdong@stu.ecnu.edu.cn},~Bo Ning\footnote{College of Computer Science, Nankai University, Tianjin, 300350, P.R. China.
Email: bo.ning@nankai.edu.cn (B. Ning). Partially supported by the NSFC (No. 12371350) and the Fundamental Research Funds for the Central
Universities (63243151).},~Long-Tu Yuan\footnote{School of Mathematical Sciences, Key Laboratory of MEA(Ministry of Education) \& Shanghai
Key Laboratory of PMMP, East China Normal University, Shanghai, 200241, China. Email: ltyuan@math.ecnu.edu.cn.}~ and~Xiao-Dong Zhang\footnote{School of Mathematical Sciences, MOE-LSC, SHL-MAC,
Shanghai Jiao Tong University, 800 Dongchuan Road, 200240, Shanghai, P.R.China. Email: xiaodong@sjtu.edu.cn (X.-D. Zhang). Partially supported by the National Natural Science Foundation of China (No.12371354)  and the Montenegrin-Chinese Science and Technology (No.4-3).} }
\date{}
\maketitle
\begin{abstract}
The Tur\'{a}n number of a graph $H$ is the maximum number of edges in any graph of order $n$ that does not contain $H$ as a subgraph.
In 1959, Erd\H os and Gallai obtained a sharp upper bound of Tur\'{a}n numbers for a path of arbitrary length. In 1975, Faudree
and Schelp, and independently in 1977, Kopylov determined the exact values of Tur\'an numbers
of paths with arbitrary length.
In this paper, we determine the Tur\'{a}n number of two vertex-disjoint paths of odd order at least 4.
Together with previous works, we determine the exact Tur\'an numbers of two vertex-disjoint paths completely.
This confirms the first $k=2$ case of a conjecture proposed by Yuan and Zhang in 2021,
which generalizes the Tur\'an number formula of paths due to Faudree-Schelp, and Kopylov in a
broader setting. Our main tools include a refinement of P\'{o}sa's rotation lemma, a stability
result of Kopylov's theorem on cycles, and a recent inequality on circumference, minimum degree,
and clique number of a 2-connected graph.
\end{abstract}

{{\bf Key words:} Tur\'{a}n number; vertex-disjoint paths.}

{{\bf AMS Classifications:} 05C35.}
\vskip 0.5cm

\section{Introduction}

The \textit{Tur\'{a}n number} of a graph $H$, denote by $\mathrm{ex}(n,H)$,
is the maximum number of edges in a graph $G$ of order $n$ that does not contain $H$ as a subgraph.
Denote by $\mathrm{Ex}(n,H)$ the set of graphs on $n$ vertices with $\mathrm{ex}(n,H)$
edges that do not contain $H$ as a subgraph. We call a graph in $\mathrm{Ex}(n,H)$
an extremal graph for $H$.

As a major branch of graph theory, extremal graph theory aims to study the behaviors
of $\mathrm{ex}(n,H)$ and $\mathrm{Ex}(n,H)$. In 1941, Tur\'an \cite{Turan1941}
determined that $\mathrm{Ex}(n,K_{r+1})=T_{n,r}$, where $K_{r+1}$ denotes
the complete graph of order $r+1\geq 3$, and $T_{n,r}$ denotes the balanced
$r$-partite Tur\'an graph on $n$ vertices. The celebrated Erd\H{o}s-Stone-Simonovits
Theorem said that for any graph $H$ with chromatic number $\chi(H)=k+1\geq 2$,
we have $\mathrm{ex}(n,H)=(1-1/k+o(1))(n^2/{2})$,
which asymptotically determined Tur\'an numbers of all graphs with chromatic number at least 3.
When $\chi(H)=2$, the Erd\H{o}s-Stone-Simonovits Theorem implies that
$\mathrm{ex}(n,H)=o(n^2)$ and does not provide us much information.
Until now, the study of Tur\'an number of bipartite graphs is a central
topic in extremal graph theory, and our knowledge about
it is rather limited. For a wonderful survey on this topic, we refer
to \cite{Furedi2013}.

Among bipartite graphs, the complete picture of the paths is known to us.
Denote by $P_k$ the path of order $k$. Moreover, if $k$ is odd then $P_k$
is called an odd path; otherwise an even path.
Dating back to 1959, Erd\H{o}s and Gallai \cite{Erdos-Gallai-1959} proved a classic result
on Tur\'an number of $P_k$ as follows.

\begin{theorem}[\cite{Erdos-Gallai-1959}]\label{THM: path}
Let $n\geq k\geq 3$ and $t$ be integers. Then
\begin{equation*}
          \mathrm{ex}(n,P_k)\leq \frac{(k-2)n}{2},
\end{equation*}
where equality holds if and only if $n=t(k-1)$.
\end{theorem}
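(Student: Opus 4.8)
The plan is to prove the upper bound $e(G)\le (k-2)n/2$ for every $n$-vertex $P_k$-free graph $G$ by induction on $n$, and then to read off the equality characterization from the same induction. For the base case $n\le k-1$ the bound is immediate, since $e(G)\le \binom{n}{2}=\frac{n(n-1)}{2}\le \frac{(k-2)n}{2}$ using $n-1\le k-2$. For the inductive step I would assume $n\ge k$ and split according to the global structure of $G$.

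If $G$ is disconnected, I would apply the induction hypothesis to each component (each of order $<n$ and still $P_k$-free) and sum the resulting bounds; this is legitimate because the target bound is linear in the number of vertices. So the essential case is $G$ connected. Here I first search for a vertex $v$ with $\deg(v)\le (k-2)/2$: if one exists, then $G-v$ is $P_k$-free on $n-1$ vertices, and $e(G)=e(G-v)+\deg(v)\le \frac{(k-2)(n-1)}{2}+\frac{k-2}{2}=\frac{(k-2)n}{2}$ by induction. Thus the only remaining possibility is that $G$ is connected with $\delta(G)>(k-2)/2$, equivalently $\delta(G)\ge \lceil (k-1)/2\rceil$.

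The heart of the argument, and the step I expect to be the main obstacle, is to show that this last possibility cannot occur once $n\ge k$, because such a $G$ must already contain $P_k$. I would take a longest path $P=v_0v_1\cdots v_p$; maximality forces $N(v_0)$ and $N(v_p)$ to lie entirely on $P$. The key observation, an Ore/P\'osa-type rotation, is that if the index sets $\{i: v_0v_i\in E\}$ and $\{i: v_{i-1}v_p\in E\}$ meet, one obtains a cycle through all $p+1$ vertices of $P$; connectivity together with $n>p+1$ would then let me attach an outside vertex and lengthen $P$, contradicting maximality. Hence these sets are disjoint subsets of $\{1,\dots,p\}$, giving $\deg(v_0)+\deg(v_p)\le p$. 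But $\deg(v_0)+\deg(v_p)\ge 2\delta\ge 2\lceil (k-1)/2\rceil\ge k-1$, so $p\ge k-1$ and $P$ has at least $k$ vertices, i.e.\ a $P_k$. This contradiction closes the induction. The delicate point is the parity bookkeeping for odd $k$, where $2\delta$ only just reaches $k-1$, so every inequality must be tracked exactly.

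Finally, for the equality statement I would revisit the induction to determine which graphs attain $\frac{(k-2)n}{2}$. The disjoint union of $t$ copies of $K_{k-1}$ is $P_k$-free and has exactly $t\binom{k-1}{2}=\frac{(k-2)n}{2}$ edges, giving sufficiency with $n=t(k-1)$. For necessity, equality must hold in every component, and the connected high-minimum-degree case was just shown to be impossible for orders $\ge k$; moreover, a low-degree vertex $v$ deleted in the previous case, if adjacent to a component equal to $K_{k-1}$, would create a $P_k$ by prepending $v$ to a suitable Hamilton path of that clique. Pushing this through forces each component of an extremal graph to be a $K_{k-1}$, so $n$ must be a multiple of $k-1$, which is exactly the condition $n=t(k-1)$.
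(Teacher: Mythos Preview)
The paper does not supply a proof of this statement; Theorem~\ref{THM: path} is quoted from Erd\H{o}s and Gallai as a classical result and used only as background. Your argument is the standard one and is correct in outline. Two places deserve one extra sentence each. In the rotation step, when the two index sets meet you obtain a cycle on all of $V(P)$ and then invoke $n>p+1$ to extend, but you should also record that if $n=p+1$ the path $P$ already has $n\ge k$ vertices, so that subcase is trivial. For the equality characterization, it is cleanest to isolate and prove the strengthened claim that every \emph{connected} $P_k$-free graph on $m\ge k$ vertices satisfies $e(G)<(k-2)m/2$ strictly: for odd $k$ this is immediate since the deleted vertex has degree at most $(k-3)/2$, and for even $k$ your observation that a $K_{k-1}$ component of $G-v$ together with $v$ yields a $P_k$ closes the induction. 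With those two details made explicit your proof is complete.
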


Note that Erd\H{o}s-Gallai Theorem determines the exact Tur\'an numbers
of paths when $n=t(k-1)$. In 1975, Faudree and Schelp \cite{Faudree1975},
and independently, Kopylov \cite{Kopylov1977} in 1977 determined all the
values of $\mathrm{ex}(n,P_k)$.

\begin{theorem}[\cite{Faudree1975,Kopylov1977}]\label{extrem}
Let $n\geq k\geq 3$. Then $$\mathrm{ex}(n,P_k)=s{k-1\choose 2}+{r\choose 2},$$
where $n=s(k-1)+r$ and $0\leq r\leq k-2$.
\end{theorem}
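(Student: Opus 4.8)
The plan is to establish the matching lower and upper bounds separately. For the lower bound I would exhibit the graph $G_0=sK_{k-1}\cup K_r$ on $n=s(k-1)+r$ vertices: every component has at most $k-1$ vertices and so contains no path on $k$ vertices, whence $G_0$ is $P_k$-free, and a direct count gives $e(G_0)=s{k-1\choose 2}+{r\choose 2}$. Thus it remains to prove the reverse inequality $\mathrm{ex}(n,P_k)\le f(n)$, where I abbreviate $f(n):=s{k-1\choose 2}+{r\choose 2}$ for $n=s(k-1)+r$, $0\le r\le k-2$. Two arithmetic features of $f$ will drive the argument. First, $f$ is the maximum of $\sum_i{n_i\choose 2}$ over all partitions $n=\sum_i n_i$ into parts $n_i\le k-1$ (by convexity of ${x\choose 2}$ the optimum uses as many parts of size $k-1$ as possible), and hence $f$ is superadditive: $f(a)+f(b)\le f(a+b)$, since juxtaposing an optimal packing of $a$ with one of $b$ is a legal packing of $a+b$. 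Second, the one-step increment satisfies $f(n)-f(n-1)\le k-2$, with equality precisely when $(k-1)\mid n$.

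I would prove $\mathrm{ex}(n,P_k)\le f(n)$ by strong induction on $n$. The base cases $n\le k-1$ are immediate, since any graph on at most $k-1$ vertices has at most ${n\choose 2}=f(n)$ edges and $K_n$ is $P_k$-free. For the inductive step, let $G$ be a $P_k$-free graph on $n\ge k$ vertices. If $G$ is disconnected, split it into two nonempty $P_k$-free graphs on $a$ and $n-a$ vertices; applying the induction hypothesis to each and then superadditivity yields $e(G)\le f(a)+f(n-a)\le f(n)$. All of the difficulty therefore lies in the connected case.

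For connected $G$ I would use the block--cut-vertex decomposition and induct on an end-block. Choose a leaf block $B$ meeting the rest of $G$ in a single cut vertex $c$ (if $G$ is $2$-connected, take $B=G$), and write $G'=G-(V(B)\setminus\{c\})$. Since $B$ and $G'$ are edge-disjoint and share only $c$, we have $e(G)=e(B)+e(G')$, while $G'$ is $P_k$-free on fewer vertices, so $e(G')\le f(|V(G')|)$ by induction. The structural heart of the matter is that paths concatenate through $c$: if $G'$ contains a path on $a$ vertices ending at $c$ and $B$ contains one on $b$ vertices ending at $c$, then $G$ contains a path on $a+b-1$ vertices, so $P_k$-freeness forces $a+b-1\le k-1$. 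Exploiting this trade-off to bound $e(B)$ in terms of the longest path through $c$, and feeding it against the increment estimate for $f$, is what should close the induction.

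The main obstacle is the $2$-connected kernel: controlling $e(B)$ exactly when $B$ is $2$-connected and $P_k$-free. The Erd\H{o}s--Gallai bound of Theorem~\ref{THM: path} already gives $e(B)\le (k-2)|V(B)|/2$, which is comfortably within $f$ for large blocks, so the real work is at small blocks and at the boundary values where $(k-1)\mid n$ and the increment of $f$ attains its maximum $k-2$; there the extremal configuration $K_{k-1}$ must be pinned down precisely. To do so I would invoke a Kopylov-type circumference/longest-path estimate for $2$-connected graphs---exactly the kind of ingredient flagged in the abstract---or, to keep the argument self-contained, run a P\'osa rotation on a longest path $P=v_1\cdots v_\ell$ with $\ell\le k-1$: because the endpoints of longest paths on $V(P)$ send all their edges into $V(P)$, the rotation endpoint-set together with its neighborhood either extends $P$ (impossible) or exhibits a clique-like separator along which $G$ splits into strictly smaller $P_k$-free pieces amenable to the induction. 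Verifying that none of the resulting bounds exceed $f(n)$ is a finite but delicate case check, which I expect to be where most of the effort is spent.
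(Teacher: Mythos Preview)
The paper does not give its own proof of this statement: Theorem~\ref{extrem} is quoted from Faudree--Schelp~\cite{Faudree1975} and Kopylov~\cite{Kopylov1977} as background, and is only \emph{used} later (e.g.\ in Propositions~\ref{eq-1} and~\ref{compa} and in the proof of Theorem~\ref{turan two odd path}). So there is no in-paper argument against which to compare your proposal.

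As for the proposal on its own merits: the lower bound, the superadditivity of $f$, the increment estimate, and the reduction to the connected case are all fine and are indeed how the classical arguments begin. The gap is exactly where you flag it: the $2$-connected block $B$. The Erd\H{o}s--Gallai bound $e(B)\le (k-2)|V(B)|/2$ by itself is \emph{not} enough to close the induction for small blocks (take $|V(B)|$ just above $k-1$), and your ``finite but delicate case check'' is doing all the real work. In the literature this is handled in one of two precise ways: Kopylov's route bounds $e(B)$ for a $2$-connected graph with no $P_k$ via his circumference theorem (Theorem~\ref{Kopylov} here), yielding $e(B)\le\max\{h(|V(B)|,k,2),h(|V(B)|,k,\lfloor (k-1)/2\rfloor)\}$, after which the comparison with $f$ is a clean calculation; Faudree--Schelp's route does a sharper endpoint/rotation analysis on a longest path and explicitly tracks the two competing extremal structures ($K_{k-1}$ versus $K_{\lfloor k/2\rfloor-1}\vee \overline{K}_{\,\cdot}$). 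Your sketch gestures at both without committing to either, so as written it is an outline rather than a proof; to make it complete you should pick one and carry out the bound on $e(B)$ explicitly.
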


It is very natural to consider the Tur\'an number of disjoint paths.
Let $G$ and $H$ be two graphs, we denote by $G\cup H$ the \emph{disjoint union} of $G$ and $H$.
For a positive integer $t\geq 2$, denote by $tG$ the disjoint union of $t$ copies
of $G$ and $M_t$ a matching of size $t$. Obviously, $M_t$ equals to $tP_2$.
The first famous result about matchings (i.e., disjoint $P_2$'s) is also due to
Erd\H{o}s and Gallai \cite{Erdos-Gallai-1959}.
\begin{theorem}[\cite{Erdos-Gallai-1959}]
Let $n,r$ be two positive integers with $n\geq 2r$ and $r\geq 1$. Then
$$
\mathrm{ex}(n,M_t)=\max\left\{\binom{2t-1}{2},\binom{t-1}{2}+(t-1)(n-t+1)\right\}.
$$
\end{theorem}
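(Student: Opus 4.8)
The plan is to establish the equality by proving the two inequalities separately, starting with the easy lower bound. For $\mathrm{ex}(n,M_t)\ge \max\{\binom{2t-1}{2},\binom{t-1}{2}+(t-1)(n-t+1)\}$ I would exhibit two $M_t$-free graphs realizing the two quantities. The first is $G_1=K_{2t-1}\cup (n-2t+1)K_1$: every matching lies inside the clique on $2t-1$ vertices and so has at most $t-1$ edges, while $e(G_1)=\binom{2t-1}{2}$. The second is $G_2=K_{t-1}\vee \overline{K_{n-t+1}}$, the join of a clique on $t-1$ vertices with an independent set on the remaining $n-t+1$ vertices; since every edge meets the clique, any matching has at most $t-1$ edges, and $e(G_2)=\binom{t-1}{2}+(t-1)(n-t+1)$. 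Both are $M_t$-free, giving the lower bound.

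For the upper bound I would take $G$ with $n\ge 2t$ vertices and matching number $\nu(G)\le t-1$ and aim to show $e(G)\le\max\{\binom{2t-1}{2},\binom{t-1}{2}+(t-1)(n-t+1)\}$. The main tool is the Berge--Tutte (deficiency) theorem: since $\nu(G)\le t-1$, there exists a set $S\subseteq V(G)$ with $o(G-S)-|S|\ge n-2t+2$, where $o(\cdot)$ denotes the number of odd components. Writing $s=|S|$, the $o(G-S)\ge n-2t+2+s$ odd components are vertex-disjoint and each nonempty, so $n-2t+2+s\le o(G-S)\le n-s$, which forces $s\le t-1$. I then split $e(G)$ into edges inside $S$, edges between $S$ and $G-S$, and edges inside the components $C_1,\dots,C_m$ of $G-S$, bounded respectively by $\binom{s}{2}$, $s(n-s)$, and $\sum_i\binom{|C_i|}{2}$.

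The heart of the argument, and the step I expect to be the main obstacle, is controlling $\sum_i\binom{|C_i|}{2}$. Because $\binom{\cdot}{2}$ is convex, this sum, subject to $\sum_i|C_i|=n-s$ and at least $n-2t+2+s$ parts, is maximized by concentrating all mass in one component and making the rest singletons, which yields $\sum_i\binom{|C_i|}{2}\le\binom{2(t-s)-1}{2}$. Combining the three bounds gives $e(G)\le f(s):=\binom{s}{2}+s(n-s)+\binom{2(t-s)-1}{2}$ for some integer $0\le s\le t-1$. The care here is twofold: verifying that the extremal partition is the "one big part plus singletons" configuration, and tracking the parity/feasibility so the stated component size $2(t-s)-1$ is correct.

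Finally I would finish by a discrete optimization of $f$ over $s\in\{0,1,\dots,t-1\}$. One checks that $f$ has positive second difference, hence is convex, so its maximum is attained at an endpoint of the interval. Since $f(0)=\binom{2t-1}{2}$ and $f(t-1)=\binom{t-1}{2}+(t-1)(n-t+1)$, this gives $e(G)\le\max\{f(0),f(t-1)\}$, exactly matching the lower bound and completing the proof. An alternative route is the original elementary induction on $t$ using a maximum matching and the absence of augmenting paths, but I find the Berge--Tutte decomposition cleaner, as it reduces the whole problem to the single convexity computation above.
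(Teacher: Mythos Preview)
The paper does not prove this theorem; it is merely quoted as a classical result of Erd\H{o}s and Gallai with a citation, so there is no ``paper's proof'' to compare against. Your proposal is a correct and self-contained proof of the Erd\H{o}s--Gallai matching theorem.

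A couple of small points worth tightening. First, in the component-counting step you write ``components $C_1,\dots,C_m$'' and then use $m\ge n-2t+2+s$, but Berge--Tutte controls only the number of \emph{odd} components; you should note explicitly that the extremal configuration (one large part of size $2(t-s)-1$ and the rest singletons) is feasible because $2(t-s)-1$ is odd, and that allowing additional even components cannot help since replacing the big odd part by a big even part of size $2(t-s)-2$ strictly decreases the bound. Second, the convexity computation is fine: the second difference of $f(s)=\binom{s}{2}+s(n-s)+\binom{2(t-s)-1}{2}$ is the constant $3>0$, so the maximum over $s\in\{0,\dots,t-1\}$ is indeed at an endpoint, yielding exactly $\binom{2t-1}{2}$ and $\binom{t-1}{2}+(t-1)(n-t+1)$.

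For context, the original 1959 Erd\H{o}s--Gallai argument proceeds by an elementary case analysis/induction around a maximum matching rather than via the deficiency formula; your Gallai--Edmonds/Berge--Tutte route is the modern streamlined approach and is arguably cleaner, reducing everything to a single convex optimization in one integer parameter.
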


In \cite{Liu2013}, Lidick\'{y}, Liu, and Palmer determined the Tur\'an number of disjoint paths when $n$ is sufficiently large.
The extremal graphs for a given graph when $n$ is small are different for those when $n$ is large in most cases (for example see \cite{erdHos1965,Furedi20152}).
As a considerable extension of Erd\H{o}s-Gallai Theorem and Lidick\'{y}-Liu-Palmer Theorem, the third and fourth
authors \cite{Yuan-Zhang2021} proposed the following conjecture on the exact Tur\'an number of disjoint paths (for all values of $n$).
To understand the conjecture, we first need to introduce the following symbols.

\begin{definition}
Let $n\geq m\geq \ell\ge 3$ be given three positive integers. Then $n$ can be written as $n=(m-1)+t(\ell-1)+r$, where $t\geq 0$ and $ 0\leq r < \ell-1.$  Denote by
 $$c(n,m,\ell)\equiv {m-1 \choose 2}+t{\ell-1 \choose 2}+{r \choose 2}$$
  and
  $$c(n,m)\equiv {\lfloor m/2\rfloor-1 \choose 2}+\lfloor (m-2)/2 \rfloor\left(n-\lfloor m/2 \rfloor+1\right).$$ Moreover, if  $n\leq m-1$ then denote by $$c(n,m,\ell)\equiv{n \choose 2}.$$
\end{definition}

\begin{conjecture}[\cite{Yuan-Zhang2021}]\label{Conj:Yuan-Zhang}
Let $k_1\geq k_2\geq\ldots\geq k_{m}\geq 3$ and $k_1>3$. If $F_m=
P_{k_1}\cup P_{k_2}\cup \ldots \cup P_{k_m}$, then
\begin{eqnarray*}
\mbox{ex}(n, F_m) = \max\left\{c(n,k_1,k_1),c(n,k_1+k_2,k_2),\ldots,c\left(n,\sum_{i=1}^{m}k_{i},k_{m}\right),c\left(n,\sum_{i=1}^{m}\lfloor k_{i}/2 \rfloor\right)+c\right\},
\end{eqnarray*}
where $c=1$ if all of $k_1,k_2,\ldots,k_m$ are odd; and $c=0$ otherwise.
\end{conjecture}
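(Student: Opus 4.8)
The plan is to prove the $m=2$ case of Conjecture~\ref{Conj:Yuan-Zhang}, namely to determine $\mathrm{ex}(n,P_{k_1}\cup P_{k_2})$ for $k_1\ge k_2\ge 3$ with $k_1>3$; by the previously known results it remains to handle the case where $k_1$ and $k_2$ are both odd, which is the one producing the bonus $c=1$ and is genuinely the hardest. I would first record the lower bound by exhibiting the three graphs behind the three terms of the maximum: (i) a disjoint union of cliques $K_{k_1-1}$ (plus one smaller clique), which contains no $P_{k_1}$ whatsoever; (ii) one clique $K_{k_1+k_2-1}$ together with disjoint copies of $K_{k_2-1}$, in which only the big clique is large enough to hold $P_{k_1}$ and it then has too few vertices left for a disjoint $P_{k_2}$; and (iii) the split graph $K_s\vee\overline{K_{n-s}}$ with $s=\lfloor k_1/2\rfloor+\lfloor k_2/2\rfloor-1$, which is $P_{k_1}\cup P_{k_2}$-free because each $P_{k_i}$ must spend at least $\lfloor k_i/2\rfloor$ of the dominating vertices, so two disjoint paths would need $s+1$ of them. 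When both orders are odd one extra edge may be inserted into the independent part without creating the forbidden pair, which explains $c=1$. Comparing the edge counts of (i)--(iii) gives the claimed lower bound, and all the work is in the matching upper bound.

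For the upper bound let $G$ be an extremal graph. The first observation is that $G$ is $P_{k_1+k_2}$-free, because a path on $k_1+k_2$ vertices contains a $P_{k_1}$ and a disjoint $P_{k_2}$; consequently $G$ also has no cycle on $k_1+k_2$ or more vertices. I would then run an induction on $n$ driven by the minimum degree. If $G$ contains a vertex $v$ with $\deg(v)\le\lfloor k_1/2\rfloor+\lfloor k_2/2\rfloor-1$, then $G-v$ is still $P_{k_1}\cup P_{k_2}$-free and, since this number is exactly the per-vertex increment of the split graph (iii), deleting $v$ and applying the inductive hypothesis closes the estimate (the competing clique terms being handled by monotonicity of the optimum in $n$). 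Iterating, I reduce either to a small base graph or to the dense case $\delta(G)\ge\lfloor k_1/2\rfloor+\lfloor k_2/2\rfloor$.

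The core is the dense case. Here at most one component of $G$ can contain $P_{k_1}$: if some component $G_1$ does, then every other component must be $P_{k_2}$-free (else we combine to get $P_{k_1}\cup P_{k_2}$), so $G-V(G_1)$ is $P_{k_2}$-free and its edges are bounded by Theorem~\ref{extrem}; and if no component contains $P_{k_1}$ then $G$ is $P_{k_1}$-free and Theorem~\ref{extrem} finishes. This reduces everything to bounding $e(G_1)$ for a connected, $P_{k_1}\cup P_{k_2}$-free graph with no cycle on $k_1+k_2$ or more vertices. Passing to its $2$-connected structure, I would feed the bounded circumference into the recent inequality relating circumference, minimum degree and clique number: it forces either a clique close to $K_{k_1+k_2-1}$, steering $G_1$ toward construction (ii), or a small dominating core, and the stability form of Kopylov's theorem on cycles then pins the near-extremal structure down to $K_s\vee(\mbox{cliques})$, i.e. construction (iii). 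The refinement of P\'osa's rotation lemma is the device that upgrades ``close to'' into an exact description: rotating a longest path of $G_1$ to reposition its endpoints either exhibits a genuine $P_{k_1}\cup P_{k_2}$, a contradiction, or certifies that the only surviving extra edges are exactly those of (ii) and (iii).

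I expect the main obstacle to be the odd-odd parity bonus. The stability machinery only localizes $G_1$ near the split graph; deciding precisely whether a single additional edge can persist inside the independent part without completing two disjoint odd paths requires controlling the endpoints of both prospective paths at the same time, which is exactly what the refined rotation lemma is built to do. Turning its output into an \emph{exact} count, valid uniformly in $n$---where constructions (i)--(iii) alternate as the optimum and must be compared edge-for-edge precisely at the ranges where they change places---is where the genuine difficulty lies.
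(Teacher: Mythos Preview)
Your outline identifies the right three tools and the correct constructions for the lower bound, but the route you sketch for the upper bound diverges from the paper's and leaves a real gap.

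The paper does \emph{not} run an induction on $n$ by peeling off low-degree vertices. Instead it uses a two-step reduction that you are missing. First, given a connected $P_{2k_1+1}\cup P_{2k_2+1}$-free graph $G$, it adjoins a \emph{universal} vertex $x$ to obtain a $2$-connected graph $G^*$; this is what replaces your vague ``passing to its $2$-connected structure''. Second---and this is the idea your sketch lacks---it isolates an explicit finite family $\{H_k(1),H_k(M_2),H_k(P_3)\}$ of graphs on $2k+1$ vertices with the property that deleting \emph{any} single vertex still leaves a copy of $P_{2k_1+1}\cup P_{2k_2+1}$ inside (Lemmas~\ref{property for H} and~\ref{H(2k+1)}). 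Hence $G^*$ cannot contain any of these, nor a cycle of length $\ge 2k+1$, and the whole problem becomes: bound $e(G^*)$ for a $2$-connected graph avoiding this family. That is Lemma~\ref{Lemma:Stablity-Kopylov2}, proved by Kopylov-style $\alpha$-disintegration; the refined P\'osa lemma is used \emph{inside} that proof to force one of the graphs $H_k(s)$ or $F_k(t)$ to appear once a long path with two degree-$k$ ends is present, and the circumference--minimum-degree--clique inequality handles one residual subcase. The role of each tool is thus much more specific than in your description.

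Concretely, your plan breaks at two points. The min-degree induction step ``closes the estimate'' only for the $h(n,\cdot,\cdot)$ term; the clique terms $c(n,k_1+k_2,k_2)$ and $\mathrm{ex}(n,P_{k_1})$ do not drop by $\lfloor k_1/2\rfloor+\lfloor k_2/2\rfloor-1$ per deleted vertex, so ``monotonicity of the optimum'' does not by itself make the induction go through. And in your dense case you never say what structural statement the stability of Kopylov's theorem is supposed to give you for a graph that is merely $P_{k_1}\cup P_{k_2}$-free rather than circumference-bounded; without the vertex-deletion-robust family and the universal-vertex trick, there is no clean way to convert the path-pair condition into the cycle condition that those tools actually require.
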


The third and fourth authors confirmed the above conjecture for the case
at most $k_1,k_2,\ldots,k_m$ is odd (see Theorem 1.7 in \cite{Yuan-Zhang2021}).

One may find that the above conjecture cannot cover previous results, which
permit each odd path to be of order 3 and is also very interesting.
For this topic, Gorgoal (2011, \cite{gorgol}) determined $\mathrm{ex}(n,3P_3)$ and posed a conjecture
on $\mathrm{ex}(n,kP_3)$ for sufficiently large $n$. Her conjecture was solved
in a stronger form by Bushaw and Kettle (2011, \cite{Bushwa2011}),
in which they determined $\mathrm{ex}(n,kP_3)$ for $n\geq 7k$. Later, the exact
values of $\mathrm{ex}(n,kP_3)$ for all the values of $k,n$ were determined by Campos
and Lopes (2018, \cite{Campos-Lopes2018}), and independently by Yuan and Zhang
(2017, \cite{Yuan-Zhang2017}).
Moreover, Yuan and Zhang \cite{Yuan-Zhang2017} obtained all extremal graphs in $\mathrm{EX}(n,kP_3)$.
In the other direction, Bielak and Kieliszek \cite{Bielak2014,Bielak2016} (2014, 2016)
determined $\mathrm{ex}(n,3P_4)$ and $\mathrm{ex}(n,2P_5)$, respectively.
Deng, Hou, and Zeng (2024, \cite{Deng-Hou-Zeng2024}) determined $\mathrm{ex}(n,3P_7)$
and $\mathrm{ex}(n,2P_3\cup P_{2\ell+1})$.
Very recently, Lai, Ma, and Yan (2026, \cite{LMY2026}) extended Deng et al.'s result and
determined $\mathrm{ex}(n,kP_3\cup rP_2)$.
All of these results partially support Yuan-Zhang's conjecture. (The fact that Lai et
al.'s result partially supports the Yuan-Zhang conjecture is not very obvious.
We refer the reader to the appendix part of \cite{LMY2026}
for the reason.)

The main purpose of this paper is to confirm the first case $k=2$ of
Conjecture \ref{Conj:Yuan-Zhang} completely. To state a previous related
result (also related to the case $k=2$), we need to introduce two definitions.

\begin{mydef}\label{fig:3}
For integers $n\geq k \geq 2a$, let $H(n,k,a)$ be the $n$-vertex graph whose
vertex set is partitioned into three sets $A,B,C$ such that $|B| = a$,
$|C| = n-k+a$, $|A| = k-2a$ and whose edge set consists of all edges between
$B$ and $C$ together with all edges in $A\cup B$ (see Figure \ref{fig:3},
the subgraphs induced by $A$ and $B$ are complete graphs and the subgraph
induced by $C$ contains no edge). Let
\begin{equation}\label{ema}
 h(n, k, a):=e(H(n,k,a))={k-a\choose 2}+(n-k+a)a.
 \end{equation}
\end{mydef}

\remark Note that each path/cycle in $H(n,k,a)$ cannot contain consecutive vertices in $C$. One may check that the longest
     path in $H(n, k, a)$ contains $k$ vertices and the longest cycle in $H(n, k, a)$ contains $k- 1$ vertices.

\begin{mydef}
          For integers $n\geq s+t-1$ and $k_2\geq 2$, let $\mathcal{C}(n,s,t)$ be the set of $n$-vertex graphs which are partitioned
          into $K_{s+t-1}$ and a graph in $\mathrm{Ex}(n-s-t+1,P_{2k_2+1})$.
 \end{mydef}

Yuan and Zhang \cite{Yuan-Zhang2021} determined the maximum number of edges in a
graph $G$ of order $n$ which does not contain $P_{2\ell+1}\cup P_3$($\ell\geq 1$) as a subgraph.

\begin{theorem}[\cite{Yuan-Zhang2021}]\label{Y}
     Let $n>2\ell+4$. Then
     \begin{equation*}
          \mathrm{ex}(n,P_{2\ell+1}\cup P_3)=\max\{c(n,2\ell+1,3),\mathrm{ex}(n,P_{2\ell+1})  , h(n,2\ell,\ell-1)  \}.
     \end{equation*}
\end{theorem}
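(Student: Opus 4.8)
The plan is to establish matching lower and upper bounds, the maximum in the statement arising from three extremal constructions. For the lower bound I would exhibit three $(P_{2\ell+1}\cup P_3)$-free graphs, one for each term, and verify directly that each avoids $P_{2\ell+1}\cup P_3$. The first is the Faudree--Schelp--Kopylov extremal graph for $P_{2\ell+1}$ of Theorem~\ref{extrem} (a disjoint union of cliques of order $2\ell$ together with one smaller clique); being $P_{2\ell+1}$-free it is trivially $(P_{2\ell+1}\cup P_3)$-free and realises $\mathrm{ex}(n,P_{2\ell+1})$. The second is the disjoint union of a clique of order $2\ell+3$ and a matching on the remaining vertices: since $P_{2\ell+1}\cup P_3$ has $2\ell+4$ vertices the clique is one vertex too small to contain it, and a pendant matching edge can never serve as a $P_3$ disjoint from a $P_{2\ell+1}$, so this graph realises the clique-plus-matching term. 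The third is the graph $H(n,2\ell,\ell)=K_{\ell}\vee\overline{K}_{n-\ell}$ of Definition~\ref{fig:3}: any $P_{2\ell+1}$ here must use all $\ell$ dominating vertices, so after deleting it the remainder is independent and has no $P_3$; moreover one further edge may be placed inside the independent part without creating the forbidden union, which accounts for the parity contribution.

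For the upper bound let $G$ be an extremal graph on $n>2\ell+4$ vertices and let $P=v_1\cdots v_p$ be a longest path of $G$. The first observation is that $p\le 2\ell+3$: if $p\ge 2\ell+4$, then $v_1\cdots v_{2\ell+1}$ and $v_{p-2}v_{p-1}v_p$ are vertex-disjoint subpaths forming a $P_{2\ell+1}\cup P_3$, a contradiction. If moreover $p\le 2\ell$ then $G$ is $P_{2\ell+1}$-free and Theorem~\ref{extrem} yields $e(G)\le\mathrm{ex}(n,P_{2\ell+1})$, one of the three terms. Thus it remains to treat $p\in\{2\ell+1,2\ell+2,2\ell+3\}$, where $G$ contains $P_{2\ell+1}$ but no path on $2\ell+4$ vertices.

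In this case I would localise the edges. Let $D$ be the component of $G$ containing $P$; it contains a $P_{2\ell+1}$, so every other component is $P_3$-free (a disjoint union of edges and isolated vertices), for otherwise a $P_3$ outside $D$ together with the $P_{2\ell+1}$ inside $D$ would give the forbidden graph. Hence $G$ is the disjoint union of a connected graph $D$ and a matching, and with $d:=|V(D)|$ it suffices to bound $e(D)$ and then maximise $e(D)+\lfloor(n-d)/2\rfloor$ over $d$. The core is a structural dichotomy: a connected graph containing $P_{2\ell+1}$, with no path on more than $2\ell+3$ vertices and no $P_{2\ell+1}\cup P_3$, either has at most $2\ell+3$ vertices, so $e(D)\le\binom{2\ell+3}{2}$, or admits a set $S$ of at most $\ell$ vertices whose removal leaves an independent set up to a single edge, so that $D$ is essentially a subgraph of $K_{\ell}\vee\overline{K}_{\,d-\ell}$. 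In the first alternative $e(D)+\lfloor(n-d)/2\rfloor$ is increasing in $d$ on the range $d\le 2\ell+3$ and so is maximised at $d=2\ell+3$, giving the clique-plus-matching term; in the second it is increasing in $d$ with slope $\ell>\tfrac12$ and so is maximised by absorbing all vertices, giving the split-graph term. Taking the larger of the three quantities finishes the proof.

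I expect the structural dichotomy to be the crux. To prove it I would fix a copy of $P_{2\ell+1}$ in $D$, note that $D$ minus its vertices is a matching, and study how the off-path vertices attach to the path, using P\'{o}sa-type rotations to produce many distinct copies of $P_{2\ell+1}$. If the attachments are spread out, one can rotate the path so as to liberate a vertex of high degree together with two of its neighbours, exposing a $P_3$ disjoint from a rotated $P_{2\ell+1}$ and contradicting $(P_{2\ell+1}\cup P_3)$-freeness; ruling this out forces all but at most $\ell$ vertices into an almost independent set, which is the split structure. The two genuinely delicate points are the bookkeeping in these rotation arguments and the exact control of the single exceptional edge inside the independent part, which is what pins down the parity term; once the edge bounds for $D$ are in hand, the final comparison of the three explicit quantities to decide which dominates for a given $n$ is routine.
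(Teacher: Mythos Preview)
The paper does not prove this statement: Theorem~\ref{Y} is quoted from \cite{Yuan-Zhang2021} as a previously established result, invoked only to cover the case $k_2=3$ that the main Theorem~\ref{turan two odd path} (which assumes $k_2>3$) does not address. There is no proof here to compare your outline against; for that you would need to consult the cited paper directly.

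Independently of that, your lower-bound constructions do not match the quantities in the displayed maximum. Reading the definition of $c(n,m,\ell)$ literally gives $c(n,2\ell+1,3)=\binom{2\ell}{2}+\lfloor(n-2\ell)/2\rfloor$, realised by $K_{2\ell}$ together with a matching, whereas your $K_{2\ell+3}$ plus a matching yields the larger quantity $c(n,2\ell+4,3)$; and by Definition~\ref{fig:3} the graph $H(n,2\ell,\ell-1)$ has a clique of order $\ell+1$ with only $\ell-1$ dominating vertices, not the split graph $K_\ell\vee\overline{K}_{n-\ell}=H(n,2\ell,\ell)$ that you describe. Since both of your graphs are genuinely $(P_{2\ell+1}\cup P_3)$-free and have strictly more edges than the corresponding displayed terms, the formula as printed here appears to carry transcription slips relative to the source; indeed the paper's own Proposition~\ref{eq-1} uses $c(n,2k_1+1,2k_2+1)$ to mean $\binom{2k_1+2k_2+1}{2}+\mathrm{ex}(n-2k_1-2k_2-1,P_{2k_2+1})$, which agrees with your interpretation rather than with the literal definition. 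So your instincts about the extremal constructions are plausibly correct, but the comparison you seek cannot be carried out against the present paper.
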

Our main theorem in this paper is as follows.

\begin{theorem}\label{turan two odd path}
Let $G$ be a graph on $n$ vertices and $k_1\geq k_2>3$ be odd numbers. Then
$$
\mathrm{ex}(n,P_{k_1}\cup P_{k_2})=\max\left\{c(n,k_1,k_2),\mathrm{ex}(n,P_{k_1}),h(n,k_1+k_2-2,\lfloor k_1/2\rfloor+\lfloor k_2/2\rfloor-1)\right\}.
$$
\end{theorem}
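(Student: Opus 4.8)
The plan is to prove matching lower and upper bounds. For the lower bound I would exhibit three $(P_{k_1}\cup P_{k_2})$-free graphs, one for each maximand. The disjoint union of a clique on $k_1+k_2-1$ vertices with $t$ copies of $K_{k_2-1}$ and one clique $K_r$ is admissible because no single component has the $k_1+k_2$ vertices needed to host both paths, and it realizes the disjoint-clique maximand (degenerating to $K_n$ when $n\le k_1+k_2-1$). The Faudree--Schelp/Kopylov extremal $P_{k_1}$-free graph realizes $\mathrm{ex}(n,P_{k_1})$ and is trivially admissible. The substantial construction is $H:=H(n,k_1+k_2-2,a)$ with $a=\lfloor k_1/2\rfloor+\lfloor k_2/2\rfloor-1=(k_1+k_2-4)/2$; here $|A|=(k_1+k_2-2)-2a=2$, which is exactly where I use that $k_1,k_2$ are odd. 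To verify that $H$ is admissible, suppose $Q_1\cong P_{k_1}$ and $Q_2\cong P_{k_2}$ are vertex-disjoint in $H$, and let $\alpha_i,\beta_i,\gamma_i$ count the vertices of $Q_i$ in $A,B,C$. Since $C$ is independent with all neighbours in $B$, counting the $B$--$C$ edges of $Q_i$ gives $\gamma_i\le\beta_i+1$, with equality only if both ends of $Q_i$ lie in $C$ and every $B$-vertex of $Q_i$ has both path-neighbours in $C$; the latter is incompatible with $Q_i$ meeting $A$, so equality forces $\alpha_i=0$. Summing over $i$,
\[
k_1+k_2=\sum_i(\alpha_i+\beta_i+\gamma_i)\le \sum_i\alpha_i+2\sum_i\beta_i+2\le 2+2a+2=k_1+k_2,
\]
so every inequality is tight; in particular $\sum_i\alpha_i=2$ while simultaneously $\gamma_i=\beta_i+1$ for both $i$, forcing $\alpha_1=\alpha_2=0$, a contradiction. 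Hence $H$ is $(P_{k_1}\cup P_{k_2})$-free.

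For the upper bound, let $G$ be an extremal $(P_{k_1}\cup P_{k_2})$-free graph on $n$ vertices. If $G$ contains no $P_{k_1}$, then Theorem \ref{extrem} gives $e(G)\le\mathrm{ex}(n,P_{k_1})$ and we are done, so assume $G\supseteq P_{k_1}$. A path on $k_1+k_2$ vertices (and hence any cycle of length at least $k_1+k_2$) already contains $P_{k_1}\cup P_{k_2}$, so $G$ has circumference at most $k_1+k_2-1$. I would first treat $2$-connected $G$ and then reduce the general case to it: if $G$ is disconnected or has a cut vertex, a block carrying a $P_{k_1}$ forces all other blocks to be $P_{k_2}$-free, and an induction on the block-decomposition pushes the count down to the disjoint-clique value. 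So assume $G$ is $2$-connected with $G\supseteq P_{k_1}$, and split on the circumference $c(G)$. If $c(G)\ge k_1+k_2-1$, take a longest cycle $C$: if $|C|\ge k_1+k_2$ split it into a $k_1$-arc and a $k_2$-arc; if $|C|=k_1+k_2-1$ then $n>|C|$, so connectivity supplies an edge $vc$ with $v\notin C$, and cutting $C$ so that $c$ ends a $(k_2-1)$-arc yields, together with $v$, a $P_{k_2}$ disjoint from the complementary $k_1$-arc. In all cases we obtain $P_{k_1}\cup P_{k_2}$, a contradiction.

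It remains to handle $c(G)\le k_1+k_2-2$, which is the heart of the matter. Here Kopylov's theorem bounds $e(G)$ by the maximum of two $h$-values at this circumference. The obstruction is that the Kopylov-extremal graph at the top admissible circumference has a strictly smaller independent part $A$ (with $|A|\in\{0,1\}$) and, precisely because $k_1,k_2$ are odd, already contains two vertex-disjoint pure $C$--$B$ alternating paths with $\beta_1=(k_1-1)/2$ and $\beta_2=(k_2-1)/2$, i.e.\ a copy of $P_{k_1}\cup P_{k_2}$. Thus the Kopylov extremizer is itself inadmissible, and the admissible maximum drops to the $|A|=2$ graph $H(n,k_1+k_2-2,a)$. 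To turn this into a bound valid for all near-extremal graphs I would invoke the stability version of Kopylov's theorem together with the inequality relating circumference, minimum degree and clique number: stability shows that an admissible $2$-connected graph with more than $h(n,k_1+k_2-2,a)$ edges is structurally close to a Kopylov extremizer, and the circumference/degree/clique inequality then forces either a cycle of length at least $k_1+k_2-1$ (handled above) or two disjoint alternating paths, contradicting admissibility. A refinement of P\'osa's rotation lemma provides the rerouting that realizes these paths from a longest path of $G$.

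The main obstacle is exactly this last stability step. The gap between the Kopylov bound and the target $h(n,k_1+k_2-2,a)$ is of order $n$ (a whole ``apex layer'' of edges), so it cannot be closed by an edge count alone; one is forced to argue structurally that every admissible near-extremal graph can be rerouted into the forbidden configuration, and that the oddness of both $k_1$ and $k_2$ is what makes the denser candidate inadmissible. I also expect genuine bookkeeping in the finitely many small ranges of $n$ near $n=k_1+k_2$, where the three maximands change relative order and the degenerate $\binom{n}{2}$ regime must be separated out.
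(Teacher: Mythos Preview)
Your lower-bound verification is clean and your overall architecture---split off the $P_{k_1}$-free case, rule out circumference $\ge k_1+k_2-1$ directly, and treat the small-circumference $2$-connected case as the core---matches the paper's. Two points deserve sharpening.

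First, your reduction from connected to $2$-connected via block decomposition is not what the paper does and is more delicate than you suggest. If a block $B_1$ carries a $P_{k_1}$, it is not true that the remaining blocks are $P_{k_2}$-free; only the parts disjoint from the shared cut vertex are constrained, and $B_1$ itself must still be $(P_{k_1}\cup P_{k_2})$-free, so it may well be an $H(\cdot,k_1+k_2-2,a)$ rather than a clique---your claimed ``disjoint-clique value'' need not absorb this. The paper sidesteps all of this with a one-line trick: add a universal vertex $x$ to a connected $G$ to get a $2$-connected $G^*$, prove the stability lemma for $G^*$, and then use that each forbidden configuration still yields $P_{k_1}\cup P_{k_2}$ after deleting the single vertex $x$. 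This is why their ``vertex-deletion'' properties (Lemmas~\ref{property for H}(\romannumeral1) and~\ref{H(2k+1)}(\romannumeral1)) are stated the way they are.

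Second, and more seriously, the stability step you flag as ``the main obstacle'' really is the whole proof, and your outline does not contain the idea that makes it work. Off-the-shelf Kopylov stability (F\"uredi--Kostochka--Verstra\"ete, Ma--Ning) tells you a near-extremal $2$-connected graph is close to some $H(n,k,a)$, but the gap you must close is an entire apex layer, and ``close'' is not enough. The paper's mechanism is to name an explicit finite family of $(2k+1)$-vertex graphs $\{H_k(1),H_k(M_2),H_k(P_3)\}$, prove a tailored stability lemma (Lemma~\ref{Lemma:Stablity-Kopylov2}) saying that any $2$-connected graph exceeding $\max\{h(n,2k,k-1),h(n,2k+1,2)\}$ contains one of these or a $C_{\ge 2k+1}$, and then check separately that each of these graphs contains $P_{k_1}\cup P_{k_2}$ after deleting any vertex. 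The P\'osa refinement (Lemma~\ref{extend posa lemma}) is used precisely to pin down that the longest path must induce one of the $H_k(s)$ or $F_k(t)$; the $\omega+\delta$ inequality (Lemma~\ref{Lemma:Yuan}) is invoked only at one juncture inside the proof of Lemma~\ref{Lemma:Stablity-Kopylov2} to dispose of a specific residual case after disintegration. Without identifying this explicit obstruction family and its vertex-deletion property, your sketch does not yet contain a proof of the hard direction.
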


Together with Theorem \ref{Y} above and Theorem 1.7 in \cite{Yuan-Zhang2021}, we
confirm Conjecture \ref{Conj:Yuan-Zhang} for the base $k=2$ case completely.

We point out that the main hardness of our theorem lies on the assumption that the order
of a graph can be small, instead of that the assumption
$n$ is sufficiently large. One example is that, Erd\H{o}s \cite{Erdos1962},
Moon \cite{Moon1968}, and Simonovits \cite{Simonovits1968} proved that
$\mathrm{EX}(n,tK_{p+1})=\{K_{t-1}\vee T_{n-t+1}(p)\}$
for $n$ sufficiently large. But only very recently, after finishing three papers by
several people, the complete pictures of $\mathrm{ex}(n,K_p\cup K_q)$ and
$\mathrm{EX}(n,K_p\cup K_q)$ are known. In fact, $\mathrm{ex}(n,2K_{p+1})$ was
completely determined by Chen, Lu, and Yuan
\cite{Chen-Lu-Yuan2022} in 2021. Hu \cite{Hu2024} determined $\mathrm{ex}(n, K_p\cup K_q)$
for all $p\geq q+1$, and $\mathrm{EX}(n,K_p\cup K_q)$ where $p\geq q+1$ was
determined by Luo \cite{Luo2025} in 2025.
Moreover, our proof needs three new tools on paths, and we highlight and
postpone them and the proofs to Section \ref{Sec:2}.

\medskip

\noindent
{\bf Our approach.} Let $G$ be a  connected graph which does not contain
a path on $k$ vertices. The graph $G^\prime$ obtained from $G$ by adding
a new vertex and joining it to all vertices of $G$ is 2-connected without
of cycles of length at least $k+1$. Note that after deleting any vertex
of a cycle, the resulting graph is a path. Hence one can easily determine
the maximum number of edges in an $n$-vertex connected graph which does
not contain a path on $k$ vertices from the maximum number of edges in an
$(n+1)$-vertex 2-connected graph with circumference at most $k$. With these
ideas in mind, we will first find a family of graphs (see
Section~\ref{Sec:2}) such that after deleting
any vertex of them, each of the resulting graphs contains two odd paths
with given lengths (Lemmas~\ref{property for H} and~\ref{H(2k+1)}),
which is the most hardest part of our proof. Next, we combine P\'{o}sa
Rotation technique and the ideas from Kopylov to get an upper bound of the maximum
number of edges in an $n$-vertex 2-connected graph which contains no graphs
in Section~\ref{Sec:3} (Lemma \ref{Lemma:Stablity-Kopylov2}).
Hence we can get the upper bound for the connected Tur\'{a}n number of two
vertex-disjoint odd paths as we discuss at the beginning of this paragraph
(Theorem~\ref{3.2}). Finally, we can derive our main result (Theorem~\ref{turan two odd path})
by considering each component of the extremal graph for two disjoint odd paths.

\medskip

\noindent
{\bf Organisation.}
This paper is organised as follows. Section \ref{Sec:2} includes three new tools on paths,
i.e., an inequality on circumference, minimum degree, and clique number of a 2-connected graph (Lemma \ref{Lemma:Yuan}),
a stability result on Kopylov's theorem on cycles (Lemma \ref{Lemma:Stablity-Kopylov2}),
and a refinement of Pos\'{a}'s lemma
(Lemma \ref{extend posa lemma}). We also prove Lemma \ref{extend posa lemma} in this section.
In Section \ref{Sec:3}, we prove several new lemmas and depict many extremal graphs.
In Sections \ref{Sec:4} and \ref{Sec:5}, we prove Theorem \ref{turan two odd path}
(assuming Lemma \ref{Lemma:Stablity-Kopylov2}), and later prove
Lemma \ref{Lemma:Stablity-Kopylov2}.

\section{Three new tools on paths}\label{Sec:2}
In this section, we collect three new tools on paths.

As preliminaries, we introduce
some notation. All the graphs considered are finite, undirected, and simple (no loops or multiple edges).
Let $G=(V(G),E(G))$ be a simple graph, where $V (G)$ is the vertex set and $E(G)$ is the edge set with size $e(G)$.
If $S\subset V (G)$, the subgraph of $G$ induced by $S$ is denoted by $G[S]$.
Denote by $M_t$ a matching of size $t$, and $K_{m,n}$ the complete bipartite graph with two
bipartition with $m$ and $n$ vertices, receptively.
Moreover, we call a path $(x,y)$-path if it starts at $x$ and ends in $y$.
The circumference $c(G)$ of a graph $G$ is the length of a longest cycle in $G$.
Let $\delta(G)$ be the minimum degree of $G$. The clique number, denoted by $\omega(G)$,
is the maximum order of a clique in $G$.
For $n = k - t - 2 + \ell(t - 1) + 2$, the graph $Z(n, k, t)$ denotes the $n$-vertex graph
obtained from vertex-disjoint union of a clique $K_{k-t-2}$ and $\ell \geq 2$ copies of $K_{t-1}$
by adding two new vertices and joining them completely to all other vertices.

We first need the following lemma concerning circumference, minimum degree, and
clique number proved by  the third author in \cite{Yuan2024}.
\begin{lemma}[\cite{Yuan2024}]\label{Lemma:Yuan}
          Let $G$ be a 2-connected $n$-vertex graph. Then $$c(G)\geq \min\{n, \omega(G)+\delta(G)\},$$ unless $G$ is either  $H(n,\omega(G)+\delta(G),\delta(G))$ or $Z(n,\omega(G)+\delta(G),\delta(G))$.
\end{lemma}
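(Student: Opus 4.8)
Write $\omega=\omega(G)$, $\delta=\delta(G)$ and $k=\omega+\delta$. Since the inequality is vacuous when $c(G)\ge n$, I may assume $c(G)<n$, so it suffices to prove that either $c(G)\ge k$ or $G$ is one of the two listed graphs. The first step is to record the cheap facts about both candidates: each of $H(n,k,\delta)$ and $Z(n,k,\delta)$ has clique number $\omega$, minimum degree $\delta$, and circumference exactly $k-1$ (for $H$ this is the Remark following its definition, noting $|B|=\delta$ and that $A\cup B$ is a clique of order $\omega$; for $Z$ it follows from a direct two-hub analysis, since any cycle alternates through the two universal vertices and so meets at most two of the clique blocks). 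This both certifies that they are genuine exceptions and tells me the target structure. The plan is then to take a longest cycle $C$, assume $|C|=c\le k-1$, and show that $G$ must collapse onto one of these two shapes.

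The engine of the argument is a combined longest-cycle and longest-path analysis. Fix a maximum clique $Q$ with $|Q|=\omega$ and a longest cycle $C$; since $c<n$ there is a vertex $v\notin C$, and $2$-connectivity yields two internally disjoint paths from $v$ to $C$, so $C$ admits local rerouting. I would exploit two standard constraints: (i) no two cyclically consecutive vertices of $C$ are both adjacent to a fixed vertex off $C$ (otherwise $C$ extends), which bounds off-cycle degrees in terms of $c$; and (ii) P\'osa's rotation lemma applied to a longest path, producing a set $S$ of reachable endpoints whose neighborhood is concentrated on a small core. Feeding in $\deg(u)\ge\delta$ for every $u$, together with the $\omega-1$ guaranteed path-extension moves available inside $Q$, I expect these constraints to force one of two outcomes: either the rotations build a cycle of length at least $\omega+\delta$, contradicting $c\le k-1$; or the neighborhoods of the off-cycle vertices all land in a common set $B$ that separates them from the rest of $G$.

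In the second outcome I would analyse the separator $B$. Showing that $B$ is complete to $V(G)\setminus B$, that the remaining vertices outside $Q$ form an independent set, and that $|B|=\delta$ with $A\cup B$ a clique of order $\omega$, identifies $G=H(n,k,\delta)$; the alternative, in which there are exactly two universal vertices whose deletion leaves a big clique $K_{\omega-2}$ together with several blocks $K_{\delta-1}$, identifies $G=Z(n,k,\delta)$. The main obstacle is precisely this final structural dichotomy. It is a delicate case analysis, because a $2$-connected graph of short circumference can split into two genuinely different extremal families, and separating them requires tracking exactly how the vertices outside a longest cycle attach. Here the hypothesis of $2$-connectivity (rather than higher connectivity) is essential: it is what both forces at least two attachment vertices and permits the independent part, or the clique blocks, to hang off only through $B$ or through the two hubs. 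I would expect the bulk of the work to lie in ruling out intermediate configurations and pinning the block sizes to be exactly $\delta-1$ and $\omega-2$, which is what makes the circumference fall short by exactly one and thereby forces the two exceptions.
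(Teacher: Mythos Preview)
The paper does not prove this lemma: it is quoted verbatim from \cite{Yuan2024} and used as a black box (see the sentence ``We first need the following lemma \ldots\ proved by the third author in \cite{Yuan2024}''). So there is no proof in the present paper to compare your proposal against.

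As for the proposal itself, it is a plausible strategic outline but not a proof. You correctly identify the two exceptional graphs, verify their parameters, and set up the longest-cycle/rotation framework, but every substantive step is phrased as an expectation (``I expect these constraints to force'', ``I would analyse'', ``I would expect the bulk of the work to lie in''). The genuine difficulty you flag---showing that a short circumference together with $\delta(G)$ and a clique of size $\omega$ forces the neighbourhoods of all off-cycle vertices into a common separator $B$ of size exactly $\delta$, and then distinguishing the $H$-type from the $Z$-type---is precisely the content of the lemma, and you have not carried it out. In particular, the passage from ``rotations fail to build a long cycle'' to ``all off-cycle vertices have a common neighbourhood $B$'' is not automatic: P\'osa rotation controls the endpoints of a single longest path, not simultaneously the attachments of every vertex outside $C$, and bridging that gap typically requires an iterated or global argument that your sketch does not supply. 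If you want a complete argument you should consult \cite{Yuan2024} directly.
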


In \cite{W76}, Woodall proposed the following conjecture: If $k\geq 3$, $2\leq \delta(G) \leq  (k-1)/2$
and $n\geq k$, and $G$ is a 2-connected $n$-vertex graph with more than
$\max\{h(n, k,\delta(G)),h(n,k,\lfloor (k-1)/2\rfloor)\}$ edges,
then $G$ contains a cycle of length at least $k$.
Ma and the second author \cite{MN2020} proved a stability result of Woodall's conjecture, which extended the previous
work of F\"{u}redi et al. \cite{FKV16}. In fact, Lemma \ref{Lemma:Yuan} is closely related to Lemma 4.4 in \cite{MN2020}.

Second, we need a stability result of Kopylov's theorem.
Recall that in 1977, Kopylov \cite{Kopylov1977} proved the following theorem.
\begin{theorem}[\cite{Kopylov1977}]\label{Kopylov}
    Let $k\geq 5$ and $G$ be a 2-connected graph. If
     \begin{equation*}
     e(G)>\max\{h(n,k,\lfloor k/2\rfloor),h(n,k,2)\},
     \end{equation*}
     then $G$ contains a cycle of length at least $k$.
     \end{theorem}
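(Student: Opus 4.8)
The plan is to prove the equivalent upper bound: if $G$ is a $2$-connected graph on $n\ge k$ vertices with $c(G)\le k-1$, then $e(G)\le\max\{h(n,k,\lfloor k/2\rfloor),h(n,k,2)\}$. First I would record two elementary properties of the function $h$ from Definition~\ref{fig:3}. For fixed $a$, increasing the middle parameter by one changes $h(n,k,a)$ by $k-2a$, so $h(n,\cdot,a)$ is nondecreasing as long as $a\le\lfloor k/2\rfloor$; and for fixed $n,k$, the quantity $h(n,k,a)$ is a convex function of $a$, hence its maximum over $2\le a\le\lfloor k/2\rfloor$ is attained at an endpoint. Consequently it suffices to exhibit a single admissible $a$ with $e(G)\le h(n,c+1,a)$ for $c:=c(G)$; the two facts then upgrade this estimate to $\max\{h(n,k,2),h(n,k,\lfloor k/2\rfloor)\}$, since $c+1\le k$.

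The core of the argument is Kopylov's disintegration. Set $s=\lfloor(k-1)/2\rfloor$ and repeatedly delete from $G$ a vertex of degree at most $s$, producing a (possibly empty) subgraph $R$ with $\delta(R)\ge s+1$. Since every deleted vertex loses at most $s$ edges at the moment of deletion,
\[ e(G)\le e(R)+s\bigl(n-|V(R)|\bigr). \]
If $R=\emptyset$, the bound becomes $e(G)\le sn-\binom{s+1}{2}$, and a short computation gives $sn-\binom{s+1}{2}=h(n,k,s)$ when $k$ is odd (and $h(n,k,s)-1$ when $k$ is even); in either case $e(G)\le h(n,k,\lfloor(k-1)/2\rfloor)$, which is dominated by the target maximum via the convexity/monotonicity above. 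This branch produces the balanced extremal graph $H(n,k,\lfloor k/2\rfloor)$.

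If $R\ne\emptyset$, then $\delta(R)\ge s+1=\lceil k/2\rceil$, so $2\delta(R)\ge k>c(G)\ge c(R)$; applying a Dirac-type circumference bound to a nonseparable block of $R$ forces $|V(R)|\le k-1$, i.e.\ $R$ is a small dense graph. The heart of the matter is then to show that the disintegrated vertices attach to $R$ so sparsely that the whole graph cannot beat $H(n,k,2)$: if some external vertex had two neighbours joined by a long path inside $R$, a P\'osa rotation along a near-Hamiltonian path of $R$ would close up a cycle of length at least $k$. Turning this local ``dense core plus sparse attachments'' picture into the exact bound $e(G)\le h(n,k,2)$, while ruling out every intermediate core of order strictly between $\lceil k/2\rceil+1$ and $k-1$, is the main obstacle, and is precisely where the longest-cycle (P\'osa) machinery does the heavy lifting.

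Finally, I would note that Lemma~\ref{Lemma:Yuan} can short-circuit much of the third step: applied to $G$ it yields $c(G)\ge\min\{n,\omega(G)+\delta(G)\}$ unless $G$ is one of the exceptional graphs $H(n,\omega(G)+\delta(G),\delta(G))$ or $Z(n,\omega(G)+\delta(G),\delta(G))$. For $n\ge k$ and $c(G)\le k-1$ this either forces $\omega(G)+\delta(G)\le k-1$ outright, or places $G$ among those exceptional graphs, whose edge counts can be computed directly and compared with $\max\{h(n,k,2),h(n,k,\lfloor k/2\rfloor)\}$ using the same monotonicity of $h$ in its middle argument. Even then one still needs the disintegration estimate to pass from the local inequality $\omega+\delta\le k-1$ to the global edge count, so Lemma~\ref{Lemma:Yuan} streamlines but does not replace the core analysis.
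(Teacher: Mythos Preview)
The paper does not prove this theorem: it is quoted as a known result of Kopylov \cite{Kopylov1977} and used only as background for the stability version, Lemma~\ref{Lemma:Stablity-Kopylov2}. So there is no ``paper's own proof'' to compare against.

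Your outline does follow Kopylov's original strategy (the $\alpha$-disintegration with $\alpha=\lfloor(k-1)/2\rfloor$, followed by a dichotomy on whether the core $R$ is empty), and the calculations in the empty-core branch are correct. But you have correctly identified, and then not resolved, the real difficulty: when $R\neq\emptyset$ you only know $\delta(R)\ge\lceil k/2\rceil$ and $c(R)\le k-1$, and from there you still need to prove $e(G)\le h(n,k,2)$. Two specific gaps: first, $\delta(R)\ge s+1$ does not immediately transfer to a $2$-connected block of $R$, so the Dirac-type step (``$2\delta(R)>c(R)$ forces $|V(R)|\le k-1$'') needs justification --- in fact Kopylov's argument shows $R$ itself is a clique, not merely small; second, once you know $R$ is a clique of order at most $k-1$, you still have to show that the vertices deleted during disintegration can contribute at most $2$ edges each in the final accounting, not $s$ each, and this is exactly the P\'osa/longest-path argument you gesture at but do not supply.

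The closing remark about Lemma~\ref{Lemma:Yuan} does not help: that lemma applies to $G$ itself, giving $\omega(G)+\delta(G)\le k-1$ (or $G$ exceptional), but knowing $\omega(G)+\delta(G)\le k-1$ is far weaker than the edge bound you need, and the exceptional graphs $H$ and $Z$ there are indexed by $\omega(G)+\delta(G)$, not by $k$, so a separate computation is still required. In short, your plan is the right one and the easy half is done, but the non-empty-core case is only described, not proved.
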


The following theorem whose proof is postpone to Section \ref{Sec:5} is a stability
result of Theorem~\ref{Kopylov} concerning circumference, that is,
if we forbidden more subgraphs of a graph, then the extremal number of edges of such graphs should be smaller.
We begin with the following definitions.
\begin{mydef}
     For a positive integer $k$, denote by $\mathcal{C}_{2k+1}$ the set of all cycles with length at least $2k+1$.
\end{mydef}

\begin{mydef}\label{fg}
     For an integer $k\geq 4$, let $H_k(P_3)$ and $H_k(M_2)$ be the graphs obtained from $K_{k-1,k+2}$ by embedding a copy of $P_3$ and a  copy of $M_2$
 into the larger part of $K_{k-1,k+2}$ respectively.
   \end{mydef}

\begin{mydef}\label{fig:1}
For integers $k\geq 4$ and $s\leq k-1$,
let $P_{2k+1}=x_1\ldots x_{2k+1}$ be a path on $2k+1$ vertices.
Let $A=\{x_1,\ldots, x_{s}\}$, $B=\{x_{2k+1},\ldots,x_{2k-s+2}\}$, $C=\{x_{s+1},x_{s+3},\ldots,x_{2k-s+1}\}$, and
$D=\{x_{s+2},$ $x_{s+4},\ldots,x_{2k-s}\}$.
Denote by $H_k(s)$ the graph obtained from $P_{2k+1}=x_1\ldots x_{2k+1}$ by adding all edges
inside $A$ and $B$, and all edges between $A\cup B$ and $C$.
\end{mydef}
Note that the graph obtained from $H_k(2)$ by adding all edges between $C$
and $D$ is $H_k(M_2)$ (see Figure \ref{fig:1}).\\

     \begin{center}
     \tikzset{every picture/.style={line width=0.75pt}} 
     \begin{tikzpicture}[x=0.75pt,y=0.75pt,yscale=-1,xscale=1]
     \draw    (31.67,85.08) -- (67.75,85.08) ;
     \draw [shift={(67.75,85.08)}, rotate = 0] [color={rgb, 255:red, 0; green, 0; blue, 0 }  ][fill={rgb, 255:red, 0; green, 0; blue, 0 }  ][line width=0.75]      (0, 0) circle [x radius= 1.34, y radius= 1.34]   ;
     \draw [shift={(31.67,85.08)}, rotate = 0] [color={rgb, 255:red, 0; green, 0; blue, 0 }  ][fill={rgb, 255:red, 0; green, 0; blue, 0 }  ][line width=0.75]      (0, 0) circle [x radius= 1.34, y radius= 1.34]   ;
     \draw    (188.02,85.08) -- (224.1,85.08) ;
     \draw [shift={(224.1,85.08)}, rotate = 0] [color={rgb, 255:red, 0; green, 0; blue, 0 }  ][fill={rgb, 255:red, 0; green, 0; blue, 0 }  ][line width=0.75]      (0, 0) circle [x radius= 1.34, y radius= 1.34]   ;
     \draw [shift={(188.02,85.08)}, rotate = 0] [color={rgb, 255:red, 0; green, 0; blue, 0 }  ][fill={rgb, 255:red, 0; green, 0; blue, 0 }  ][line width=0.75]      (0, 0) circle [x radius= 1.34, y radius= 1.34]   ;
     \draw    (67.75,85.08) -- (79.78,18.55) ;
     \draw [shift={(79.78,18.55)}, rotate = 280.25] [color={rgb, 255:red, 0; green, 0; blue, 0 }  ][fill={rgb, 255:red, 0; green, 0; blue, 0 }  ][line width=0.75]      (0, 0) circle [x radius= 1.34, y radius= 1.34]   ;
     \draw [shift={(67.75,85.08)}, rotate = 280.25] [color={rgb, 255:red, 0; green, 0; blue, 0 }  ][fill={rgb, 255:red, 0; green, 0; blue, 0 }  ][line width=0.75]      (0, 0) circle [x radius= 1.34, y radius= 1.34]   ;
     \draw    (103.83,118.34) -- (79.78,18.55) ;
     \draw [shift={(79.78,18.55)}, rotate = 256.45] [color={rgb, 255:red, 0; green, 0; blue, 0 }  ][fill={rgb, 255:red, 0; green, 0; blue, 0 }  ][line width=0.75]      (0, 0) circle [x radius= 1.34, y radius= 1.34]   ;
     \draw [shift={(103.83,118.34)}, rotate = 256.45] [color={rgb, 255:red, 0; green, 0; blue, 0 }  ][fill={rgb, 255:red, 0; green, 0; blue, 0 }  ][line width=0.75]      (0, 0) circle [x radius= 1.34, y radius= 1.34]   ;
     \draw    (151.94,118.34) -- (127.88,18.55) ;
     \draw [shift={(127.88,18.55)}, rotate = 256.45] [color={rgb, 255:red, 0; green, 0; blue, 0 }  ][fill={rgb, 255:red, 0; green, 0; blue, 0 }  ][line width=0.75]      (0, 0) circle [x radius= 1.34, y radius= 1.34]   ;
     \draw [shift={(151.94,118.34)}, rotate = 256.45] [color={rgb, 255:red, 0; green, 0; blue, 0 }  ][fill={rgb, 255:red, 0; green, 0; blue, 0 }  ][line width=0.75]      (0, 0) circle [x radius= 1.34, y radius= 1.34]   ;
     \draw    (188.02,85.08) -- (175.99,18.55) ;
     \draw [shift={(175.99,18.55)}, rotate = 259.75] [color={rgb, 255:red, 0; green, 0; blue, 0 }  ][fill={rgb, 255:red, 0; green, 0; blue, 0 }  ][line width=0.75]      (0, 0) circle [x radius= 1.34, y radius= 1.34]   ;
     \draw [shift={(188.02,85.08)}, rotate = 259.75] [color={rgb, 255:red, 0; green, 0; blue, 0 }  ][fill={rgb, 255:red, 0; green, 0; blue, 0 }  ][line width=0.75]      (0, 0) circle [x radius= 1.34, y radius= 1.34]   ;
     \draw    (103.83,118.34) -- (127.88,18.55) ;
     \draw [shift={(127.88,18.55)}, rotate = 283.55] [color={rgb, 255:red, 0; green, 0; blue, 0 }  ][fill={rgb, 255:red, 0; green, 0; blue, 0 }  ][line width=0.75]      (0, 0) circle [x radius= 1.34, y radius= 1.34]   ;
     \draw [shift={(103.83,118.34)}, rotate = 283.55] [color={rgb, 255:red, 0; green, 0; blue, 0 }  ][fill={rgb, 255:red, 0; green, 0; blue, 0 }  ][line width=0.75]      (0, 0) circle [x radius= 1.34, y radius= 1.34]   ;
     \draw    (151.94,118.34) -- (175.99,18.55) ;
     \draw [shift={(175.99,18.55)}, rotate = 283.55] [color={rgb, 255:red, 0; green, 0; blue, 0 }  ][fill={rgb, 255:red, 0; green, 0; blue, 0 }  ][line width=0.75]      (0, 0) circle [x radius= 1.34, y radius= 1.34]   ;
     \draw [shift={(151.94,118.34)}, rotate = 283.55] [color={rgb, 255:red, 0; green, 0; blue, 0 }  ][fill={rgb, 255:red, 0; green, 0; blue, 0 }  ][line width=0.75]      (0, 0) circle [x radius= 1.34, y radius= 1.34]   ;
     \draw    (79.78,18.55) -- (224.1,85.08) ;
     \draw    (284.23,85.08) -- (320.31,85.08) ;
     \draw [shift={(320.31,85.08)}, rotate = 0] [color={rgb, 255:red, 0; green, 0; blue, 0 }  ][fill={rgb, 255:red, 0; green, 0; blue, 0 }  ][line width=0.75]      (0, 0) circle [x radius= 1.34, y radius= 1.34]   ;
     \draw [shift={(284.23,85.08)}, rotate = 0] [color={rgb, 255:red, 0; green, 0; blue, 0 }  ][fill={rgb, 255:red, 0; green, 0; blue, 0 }  ][line width=0.75]      (0, 0) circle [x radius= 1.34, y radius= 1.34]   ;
     \draw    (440.58,85.08) -- (476.66,85.08) ;
     \draw [shift={(476.66,85.08)}, rotate = 0] [color={rgb, 255:red, 0; green, 0; blue, 0 }  ][fill={rgb, 255:red, 0; green, 0; blue, 0 }  ][line width=0.75]      (0, 0) circle [x radius= 1.34, y radius= 1.34]   ;
     \draw [shift={(440.58,85.08)}, rotate = 0] [color={rgb, 255:red, 0; green, 0; blue, 0 }  ][fill={rgb, 255:red, 0; green, 0; blue, 0 }  ][line width=0.75]      (0, 0) circle [x radius= 1.34, y radius= 1.34]   ;
     \draw    (320.31,85.08) -- (332.34,18.55) ;
     \draw [shift={(332.34,18.55)}, rotate = 280.25] [color={rgb, 255:red, 0; green, 0; blue, 0 }  ][fill={rgb, 255:red, 0; green, 0; blue, 0 }  ][line width=0.75]      (0, 0) circle [x radius= 1.34, y radius= 1.34]   ;
     \draw [shift={(320.31,85.08)}, rotate = 280.25] [color={rgb, 255:red, 0; green, 0; blue, 0 }  ][fill={rgb, 255:red, 0; green, 0; blue, 0 }  ][line width=0.75]      (0, 0) circle [x radius= 1.34, y radius= 1.34]   ;
     \draw    (356.39,118.34) -- (332.34,18.55) ;
     \draw [shift={(332.34,18.55)}, rotate = 256.45] [color={rgb, 255:red, 0; green, 0; blue, 0 }  ][fill={rgb, 255:red, 0; green, 0; blue, 0 }  ][line width=0.75]      (0, 0) circle [x radius= 1.34, y radius= 1.34]   ;
     \draw [shift={(356.39,118.34)}, rotate = 256.45] [color={rgb, 255:red, 0; green, 0; blue, 0 }  ][fill={rgb, 255:red, 0; green, 0; blue, 0 }  ][line width=0.75]      (0, 0) circle [x radius= 1.34, y radius= 1.34]   ;
     \draw    (404.5,118.34) -- (380.44,18.55) ;
     \draw [shift={(380.44,18.55)}, rotate = 256.45] [color={rgb, 255:red, 0; green, 0; blue, 0 }  ][fill={rgb, 255:red, 0; green, 0; blue, 0 }  ][line width=0.75]      (0, 0) circle [x radius= 1.34, y radius= 1.34]   ;
     \draw [shift={(404.5,118.34)}, rotate = 256.45] [color={rgb, 255:red, 0; green, 0; blue, 0 }  ][fill={rgb, 255:red, 0; green, 0; blue, 0 }  ][line width=0.75]      (0, 0) circle [x radius= 1.34, y radius= 1.34]   ;
     \draw    (440.58,85.08) -- (428.55,18.55) ;
     \draw [shift={(428.55,18.55)}, rotate = 259.75] [color={rgb, 255:red, 0; green, 0; blue, 0 }  ][fill={rgb, 255:red, 0; green, 0; blue, 0 }  ][line width=0.75]      (0, 0) circle [x radius= 1.34, y radius= 1.34]   ;
     \draw [shift={(440.58,85.08)}, rotate = 259.75] [color={rgb, 255:red, 0; green, 0; blue, 0 }  ][fill={rgb, 255:red, 0; green, 0; blue, 0 }  ][line width=0.75]      (0, 0) circle [x radius= 1.34, y radius= 1.34]   ;
     \draw    (356.39,118.34) -- (380.44,18.55) ;
     \draw [shift={(380.44,18.55)}, rotate = 283.55] [color={rgb, 255:red, 0; green, 0; blue, 0 }  ][fill={rgb, 255:red, 0; green, 0; blue, 0 }  ][line width=0.75]      (0, 0) circle [x radius= 1.34, y radius= 1.34]   ;
     \draw [shift={(356.39,118.34)}, rotate = 283.55] [color={rgb, 255:red, 0; green, 0; blue, 0 }  ][fill={rgb, 255:red, 0; green, 0; blue, 0 }  ][line width=0.75]      (0, 0) circle [x radius= 1.34, y radius= 1.34]   ;
     \draw    (404.5,118.34) -- (428.55,18.55) ;
     \draw [shift={(428.55,18.55)}, rotate = 283.55] [color={rgb, 255:red, 0; green, 0; blue, 0 }  ][fill={rgb, 255:red, 0; green, 0; blue, 0 }  ][line width=0.75]      (0, 0) circle [x radius= 1.34, y radius= 1.34]   ;
     \draw [shift={(404.5,118.34)}, rotate = 283.55] [color={rgb, 255:red, 0; green, 0; blue, 0 }  ][fill={rgb, 255:red, 0; green, 0; blue, 0 }  ][line width=0.75]      (0, 0) circle [x radius= 1.34, y radius= 1.34]   ;
     \draw    (332.34,18.55) -- (476.66,85.08) ;
     \draw [color={rgb, 255:red, 246; green, 16; blue, 44 }  ,draw opacity=1 ]   (332.34,18.55) -- (404.5,118.34) ;
     \draw [color={rgb, 255:red, 246; green, 16; blue, 44 }  ,draw opacity=1 ]   (356.39,118.34) -- (428.55,18.55) ;
     \draw    (31.67,85.08) -- (79.78,18.55) ;
     \draw    (31.67,85.08) -- (127.88,18.55) ;
     \draw    (31.67,85.08) -- (175.99,18.55) ;
     \draw    (67.75,85.08) -- (127.88,18.55) ;
     \draw    (67.75,85.08) -- (175.99,18.55) ;
     \draw    (188.02,85.08) -- (127.88,18.55) ;
     \draw    (224.1,85.08) -- (175.99,18.55) ;
     \draw    (188.02,85.08) -- (79.78,18.55) ;
     \draw    (224.1,85.08) -- (127.88,18.55) ;
     \draw    (284.23,85.08) -- (332.34,18.55) ;
     \draw    (284.23,85.08) -- (380.44,18.55) ;
     \draw    (284.23,85.08) -- (428.55,18.55) ;
     \draw    (320.31,85.08) -- (380.44,18.55) ;
     \draw    (320.31,85.08) -- (428.55,18.55) ;
     \draw    (476.66,85.08) -- (428.55,18.55) ;
     \draw    (440.58,85.08) -- (332.34,18.55) ;
     \draw    (476.66,85.08) -- (380.44,18.55) ;
     \draw    (440.58,85.08) -- (380.44,18.55) ;
     \draw   (13.71,85.08) .. controls (13.71,76.41) and (29.83,69.39) .. (49.71,69.39) .. controls (69.59,69.39) and (85.7,76.41) .. (85.7,85.08) .. controls (85.7,93.74) and (69.59,100.77) .. (49.71,100.77) .. controls (29.83,100.77) and (13.71,93.74) .. (13.71,85.08) -- cycle ;
     \draw   (87.83,118.34) .. controls (87.83,109.73) and (105.76,102.74) .. (127.88,102.74) .. controls (150,102.74) and (167.94,109.73) .. (167.94,118.34) .. controls (167.94,126.96) and (150,133.95) .. (127.88,133.95) .. controls (105.76,133.95) and (87.83,126.96) .. (87.83,118.34) -- cycle ;
     \draw   (340.39,118.34) .. controls (340.39,109.73) and (358.32,102.74) .. (380.44,102.74) .. controls (402.56,102.74) and (420.5,109.73) .. (420.5,118.34) .. controls (420.5,126.96) and (402.56,133.95) .. (380.44,133.95) .. controls (358.32,133.95) and (340.39,126.96) .. (340.39,118.34) -- cycle ;
     \draw   (58.35,18.55) .. controls (58.35,10.68) and (89.48,4.3) .. (127.88,4.3) .. controls (166.28,4.3) and (197.41,10.68) .. (197.41,18.55) .. controls (197.41,26.42) and (166.28,32.8) .. (127.88,32.8) .. controls (89.48,32.8) and (58.35,26.42) .. (58.35,18.55) -- cycle ;
     \draw   (310.91,18.55) .. controls (310.91,10.68) and (342.04,4.3) .. (380.44,4.3) .. controls (418.84,4.3) and (449.97,10.68) .. (449.97,18.55) .. controls (449.97,26.42) and (418.84,32.8) .. (380.44,32.8) .. controls (342.04,32.8) and (310.91,26.42) .. (310.91,18.55) -- cycle ;
     \draw   (170.06,85.08) .. controls (170.06,76.41) and (186.18,69.39) .. (206.06,69.39) .. controls (225.93,69.39) and (242.05,76.41) .. (242.05,85.08) .. controls (242.05,93.74) and (225.93,100.77) .. (206.06,100.77) .. controls (186.18,100.77) and (170.06,93.74) .. (170.06,85.08) -- cycle ;
     \draw   (266.27,85.08) .. controls (266.27,76.41) and (282.39,69.39) .. (302.27,69.39) .. controls (322.15,69.39) and (338.26,76.41) .. (338.26,85.08) .. controls (338.26,93.74) and (322.15,100.77) .. (302.27,100.77) .. controls (282.39,100.77) and (266.27,93.74) .. (266.27,85.08) -- cycle ;
     \draw   (422.62,85.08) .. controls (422.62,76.41) and (438.74,69.39) .. (458.62,69.39) .. controls (478.49,69.39) and (494.61,76.41) .. (494.61,85.08) .. controls (494.61,93.74) and (478.49,100.77) .. (458.62,100.77) .. controls (438.74,100.77) and (422.62,93.74) .. (422.62,85.08) -- cycle ;

     \draw (127.88,137.23) node [anchor=north] [inner sep=0.75pt]  [font=\small] [align=left] {$H_4(2)$};
     \draw (403.89,137.23) node [anchor=north east] [inner sep=0.75pt]  [font=\small] [align=left] {$H_4(M_2)$};
     \draw (254.16,145.79) node [anchor=north] [inner sep=0.75pt]   [align=left] {Figure \ref{fig:1}.};
     \draw (36.67,86.08) node [anchor=north east] [inner sep=0.75pt]  [font=\footnotesize] [align=left] {$x_1$};
     \draw (70.75,86.08) node [anchor=north] [inner sep=0.75pt]  [font=\footnotesize] [align=left] {$x_2$};
     \draw (79.78,20.3) node [anchor=south west] [inner sep=0.75pt]  [font=\footnotesize] [align=left] {$x_3$};
     \draw (127.88,20.3) node [anchor=south east] [inner sep=0.75pt]  [font=\footnotesize] [align=left] {$x_5$};
     \draw (175.99,20.3) node [anchor=south east] [inner sep=0.75pt]  [font=\footnotesize] [align=left] {$x_7$};
     \draw (103.83,116.59) node [anchor=north west][inner sep=0.75pt]  [font=\footnotesize] [align=left] {$x_4$};
     \draw (151.94,116.59) node [anchor=north east] [inner sep=0.75pt]  [font=\footnotesize] [align=left] {$x_6$};
     \draw (188.02,86.08) node [anchor=north] [inner sep=0.75pt]  [font=\footnotesize] [align=left] {$x_8$};
     \draw (219.1,86.08) node [anchor=north west][inner sep=0.75pt]  [font=\footnotesize] [align=left] {$x_9$};
     \draw (332.34,20.3) node [anchor=south west] [inner sep=0.75pt]  [font=\footnotesize] [align=left] {$x_3$};
     \draw (380.44,20.3) node [anchor=south east] [inner sep=0.75pt]  [font=\footnotesize] [align=left] {$x_5$};
     \draw (428.55,20.3) node [anchor=south east] [inner sep=0.75pt]  [font=\footnotesize] [align=left] {$x_7$};
     \draw (289.23,86.08) node [anchor=north east] [inner sep=0.75pt]  [font=\footnotesize] [align=left] {$x_1$};
     \draw (320.31,86.08) node [anchor=north] [inner sep=0.75pt]  [font=\footnotesize] [align=left] {$x_2$};
     \draw (356.39,116.59) node [anchor=north west][inner sep=0.75pt]  [font=\footnotesize] [align=left] {$x_4$};
     \draw (404.5,116.59) node [anchor=north east] [inner sep=0.75pt]  [font=\footnotesize] [align=left] {$x_6$};
     \draw (440.58,86.08) node [anchor=north] [inner sep=0.75pt]  [font=\footnotesize] [align=left] {$x_8$};
     \draw (471.66,86.08) node [anchor=north west][inner sep=0.75pt]  [font=\footnotesize] [align=left] {$x_9$};
     \draw (13.71,83.23) node [anchor=north east] [inner sep=0.75pt]  [font=\footnotesize] [align=left] {$A$};
     \draw (87.83,118.34) node [anchor=east] [inner sep=0.75pt]  [font=\footnotesize] [align=left] {$D$};
     \draw (170.06,83.23) node [anchor=north east] [inner sep=0.75pt]  [font=\footnotesize] [align=left] {$B$};
     \draw (58.35,18.55) node [anchor=east] [inner sep=0.75pt]  [font=\footnotesize] [align=left] {$C$};
     \draw (266.27,83.23) node [anchor=north east] [inner sep=0.75pt]  [font=\footnotesize] [align=left] {$A$};
     \draw (422.62,83.23) node [anchor=north east] [inner sep=0.75pt]  [font=\footnotesize] [align=left] {$B$};
     \draw (310.91,18.55) node [anchor=east] [inner sep=0.75pt]  [font=\footnotesize] [align=left] {$C$};
     \draw (340.39,118.34) node [anchor=east] [inner sep=0.75pt]  [font=\footnotesize] [align=left] {$D$};

     \end{tikzpicture}
\end{center}

We state the stability result of Kopylov's theorem as follows.
\begin{lemma}\label{Lemma:Stablity-Kopylov2}
Let $k\geq 5$ and $\mathcal{F}=\{H_k(1),H_k(M_2),H_k(P_3)\}\cup \mathcal{C}_{2k+1}$.
Let $G$ be a 2-connected graph.
If
\begin{equation}\label{bound for 2-connected}
e(G)>\max\{h(n,2k,k-1),h(n,2k+1,2)\}
\end{equation}
then $G$ contains a copy of  $F\in \mathcal{F}$.
\end{lemma}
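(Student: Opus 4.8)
The plan is to prove the contrapositive: suppose $G$ is $2$-connected on $n$ vertices, contains no member of $\mathcal{C}_{2k+1}$ (so $c(G)\le 2k$) and contains no copy of $H_k(1)$, $H_k(M_2)$ or $H_k(P_3)$; I will deduce $e(G)\le\max\{h(n,2k,k-1),h(n,2k+1,2)\}$. Since $e(G)\le\binom{n}{2}$ settles $n\le 2k$, assume $n>2k$. The first move is Lemma~\ref{Lemma:Yuan}: as $c(G)\le 2k<n$, either $G$ is one of the explicit graphs $H(n,\omega(G)+\delta(G),\delta(G))$, $Z(n,\omega(G)+\delta(G),\delta(G))$, or else $\omega(G)+\delta(G)\le c(G)\le 2k$. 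In the two explicit cases I would compute $e(G)$ from (\ref{ema}) and the definition of $Z$ and bound it against the target by the convexity of $a\mapsto h(n,m,a)$ (so that its maximum over admissible $a$ is attained at the endpoints). Hence the substantive case is $\omega(G)+\delta(G)\le 2k$, fixing $\delta:=\delta(G)\ge 2$ and $\omega:=\omega(G)$ with $\omega+\delta\le 2k$.

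Next I would run Kopylov's disintegration with careful edge accounting: repeatedly delete a vertex of current degree at most a fixed threshold $a$, charging at most $a$ edges to each deletion, and study the surviving $a$-irreducible core. Running this at the levels $a=k-1$ and $a=2$ yields the two terms $h(n,2k,k-1)$ and $h(n,2k+1,2)$, these being exactly the levels at which the extremal graphs $H(n,2k,k-1)$ and $H(n,2k+1,2)$ collapse onto the cliques $K_{k+1}$ and $K_{2k-1}$. The crucial subtlety is that circumference-$2k$ graphs do \emph{not} only degenerate to cliques: a complete bipartite graph such as $K_{k,k}$ has circumference exactly $2k$, minimum degree $k$ and clique number $2$, so it comfortably obeys $\omega+\delta\le 2k$ while being nowhere near a clique; and the relevant bipartite skeleton $K_{k-1,k+2}$ (whose circumference $2k-2$ rises to $2k$ the instant an $M_2$ or $P_3$ is embedded in its larger side) is of this same type. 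These bipartite cores are exactly where the exceptional subgraphs live, and separating them from clique-type cores is the heart of the matter.

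The delicate regime is the narrow band revealed by comparison with Kopylov's theorem (Theorem~\ref{Kopylov}): forbidding cycles of length $2k+1$ only gives $e(G)\le\max\{h(n,2k+1,k),h(n,2k+1,2)\}$, and since $h(n,2k+1,k)-h(n,2k,k-1)=n-k-1>0$ there is a band of width $n-k-1$ edges above our target bound in which $G$ may live. There $G$ sits within $n-k-1$ edges of Kopylov's extremal graph, and I must show any such surplus forces one of the three patterns. This is where the refined P\'osa rotation lemma (Lemma~\ref{extend posa lemma}) enters: take a longest cycle $C$ of length $2k$, adjoin a neighbour off $C$ to form a path on $2k+1$ vertices that will serve as the spine $P_{2k+1}$, and rotate this path about its ends. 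The rotations generate a large set of interchangeable endpoints whose pairwise non-adjacency is forced by $c(G)\le 2k$; reading these forced non-edges as forced edges between the dense side and the independent side reconstructs either the skeleton of $H_k(1)$, or---when the surplus supplies two further edges inside the large side---the skeleton $K_{k-1,k+2}$ carrying an embedded $M_2$ or $P_3$, namely $H_k(M_2)$ or $H_k(P_3)$.

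The hard part will be exactly this last step. Because the surplus over $h(n,2k,k-1)$ is only linear in $n$, no lossy counting is affordable: one must track essentially edge by edge how the extra adjacencies between the dense and independent parts interact with the spine, and show that as few as two or three surplus edges already create either a cycle of length $2k+1$ (a member of $\mathcal{C}_{2k+1}$) or an embedded $M_2$/$P_3$, with the relative position of the two extra edges---disjoint or sharing an endpoint---deciding between $H_k(M_2)$ and $H_k(P_3)$. The remaining tasks---the $H(\cdot)$ and $Z(\cdot)$ exceptional graphs, small $n$, and the precise threshold where $h(n,2k+1,2)$ overtakes $h(n,2k,k-1)$---amount to a lengthy but routine case analysis.
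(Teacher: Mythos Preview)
Your proposal assembles the right ingredients---disintegration, Lemma~\ref{Lemma:Yuan}, and the refined P\'osa lemma (Lemma~\ref{extend posa lemma})---but the way you wire them together has a genuine gap.

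The central missing idea is \emph{edge-maximality} (saturation). The paper's proof begins by taking $G$ to be an $\mathcal{F}$-free graph with the \emph{maximum} number of edges; then for any non-edge $xy$, the graph $G+xy$ must contain some $F\in\mathcal{F}$. This is what produces the long path: one runs the $(k-1)$-disintegration, obtains a core $H$ of order $\ell\ge k+2$, and then argues that $H$ is a clique. If not, pick a non-edge $xy$ inside $H$; adding it creates (say) a cycle of length $\ge 2k+1$, hence a path $xPy$ on $\ge 2k+1$ vertices in $G$ whose endpoints lie in $H$ and therefore satisfy $d_P(x),d_P(y)\ge k$. Combined with $c(G)\le 2k$ this forces $d_P(x)=d_P(y)=k$ exactly, and the same holds after any rotation since rotated endpoints also lie in $N_P(x)\cup N_P(y)\subseteq H$. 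That is precisely the hypothesis package of Lemma~\ref{extend posa lemma}, and its conclusion ($H_k(s)$ or $F_k(t)$ inside $G[V(P)]$) is then exploited via Lemmas~\ref{property for H} and~\ref{H(2k+1)} and Fan's theorem (Theorem~\ref{Fan}).

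Your plan bypasses saturation and instead proposes to ``take a longest cycle $C$ of length $2k$, adjoin a neighbour off $C$ to form a path on $2k+1$ vertices''. But nothing in the hypotheses guarantees $c(G)=2k$: the edge bound $e(G)>h(n,2k,k-1)$ is strictly weaker than the Kopylov threshold $h(n,2k,k)$ for circumference $\ge 2k$ (indeed $h(n,2k,k)-h(n,2k,k-1)=n-k-1$, the very band you identify), so a priori $c(G)$ could be $2k-1$ or less. Even if you had a path on $2k+1$ vertices, you give no mechanism to ensure its endpoints---and all rotated endpoints---have degree exactly $k$ on the path, which is the demanding hypothesis (\ref{eq2 for lemma}) of Lemma~\ref{extend posa lemma}. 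In the paper this comes for free because the endpoints sit in the $(k-1)$-core. Finally, your opening use of Lemma~\ref{Lemma:Yuan} on $G$ itself yields only $\omega(G)+\delta(G)\le 2k$, which says little since $\delta(G)$ may be $2$; the paper invokes Lemma~\ref{Lemma:Yuan} only at the very end, in a single residual subcase ($\ell=k+2$, $n\ge 2k+3$) after the disintegration has manufactured a graph with $\delta=k-1$ and $\omega=k+2$.
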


Thirdly, we need a refinement of Pos\'{a}'s lemma.
The following is the famous Pos\'{a}'s lemma.
\begin{lemma}[\cite{Posalemma}]\label{posa lemma}
Let $G$ be a 2-connected graph and $xPy$ be a path in $G$ of length $m$. Then
$$c(G)\geq \min\{m+1,d_P(x)+d_P(y)\}.$$
\end{lemma}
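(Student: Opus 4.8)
The plan is to run P\'osa's rotation--extension technique on the path $P=v_0v_1\cdots v_m$, writing $x=v_0$, $y=v_m$, and setting $S=\{i:v_0v_i\in E(G)\}\subseteq\{1,\dots,m\}$ and $T=\{j:v_jv_m\in E(G)\}\subseteq\{0,\dots,m-1\}$, so that $|S|=d_P(x)$ and $|T|=d_P(y)$. First I dispose of the trivial case: if $x\sim y$, then $v_0v_1\cdots v_mv_0$ is a cycle of length $m+1$ and we are done, so from now on assume $xy\notin E(G)$.

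The core observation is the rotation identity. For each $i\in S$ with $i\ge 2$, the sequence $v_{i-1}v_{i-2}\cdots v_0v_iv_{i+1}\cdots v_m$ is again a path on $V(P)$ with one endpoint still $y$ and new endpoint $v_{i-1}$ (it replaces the path-edge $v_{i-1}v_i$ by the chord $v_0v_i$). Hence, keeping the endpoint $y$ fixed, the set of reachable ``free'' endpoints is $W=\{v_{i-1}:i\in S\}$, with $|W|=|S|=d_P(x)$ and $v_0\in W$. Crucially, whenever some $w=v_{i-1}\in W$ is adjacent to $y$ --- equivalently $i-1\in T$, i.e. $S\cap(T+1)\ne\emptyset$ where $T+1=\{j+1:j\in T\}$ --- the corresponding rotated path closes into a cycle through all of $V(P)$, of length $m+1$.

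This already handles the regime in which $\min\{m+1,d_P(x)+d_P(y)\}=m+1$: since $S$ and $T+1$ are both subsets of $\{1,\dots,m\}$, if $d_P(x)+d_P(y)=|S|+|T+1|\ge m+1$ then by pigeonhole $S\cap(T+1)\ne\emptyset$, and the previous paragraph yields a cycle of length $m+1$. It remains to treat the regime $d_P(x)+d_P(y)\le m$, where I must produce a cycle of length at least $d_P(x)+d_P(y)$; equivalently, $W$ and $N_P(y)=\{v_j:j\in T\}$ are now disjoint (of sizes $d_P(x)$ and $d_P(y)$), and I want a cycle spanning at least $|W|+|N_P(y)|$ vertices. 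The idea is to rotate at the $x$-end to push a neighbor of $y$ as far from $y$ as possible and then cut the resulting path at that farthest $y$-neighbor: the crossing-chord count shows that a pair $v_0\sim v_i$, $v_m\sim v_j$ with $j<i$ produces a cycle of length $m+2-(i-j)$, so the task reduces to exhibiting such a pair with small gap $i-j$, or, when the two neighborhoods are ``separated'' along $P$, to rerouting through $G-V(P)$.

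This last step is the main obstacle, and it is exactly where $2$-connectivity is indispensable: a $2$-connected graph cannot have the $x$-neighbors and $y$-neighbors separated by a single cut vertex of $P$, so Menger's theorem supplies a path off $P$ that bridges the two ends and lets me splice the two short cycles (one through $x$'s neighbors, one through $y$'s) into a single cycle of length at least $d_P(x)+d_P(y)$. I would organize the bookkeeping so that, in every case, the rotations yield either an adjacent crossing (length $m+1$) or a crossing/bridging pair realizing the bound $d_P(x)+d_P(y)$, and I expect the careful choice of which rotated path to cut --- guaranteeing that the spliced cycle is genuinely vertex-disjoint where required --- to be the delicate point.
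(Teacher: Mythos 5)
Your skeleton is the classical rotation--extension proof, which is also what the paper relies on: it cites P\'osa for Lemma \ref{posa lemma} itself, but reproduces the relevant arguments inside the proof of its refinement, Lemma \ref{extend posa lemma} (crossing pairs in its Case 1, a common neighbour in its Case 2, separated neighbourhoods in its Case 3, the last two included ``for completeness''). Your treatment of the regime $d_P(x)+d_P(y)\geq m+1$ is correct: $S$ and $T+1$ are subsets of $\{1,\dots,m\}$, so pigeonhole forces an adjacent crossing and a cycle through all of $V(P)$. In the crossing regime, however, you stop at ``exhibiting such a pair with small gap'' without doing the step that makes it work. The standard completion along your own lines: among all pairs $v_0\sim v_i$, $v_m\sim v_j$ with $j<i$, choose one minimizing $i-j$; then no neighbour of $v_0$ or of $v_m$ lies strictly between $v_j$ and $v_i$, so the cycle $v_0Pv_jv_mPv_iv_0$, of length $m+2-(i-j)$, contains $\{v_0\}\cup N_P(v_0)$ together with the shifted set $\{v_{\ell+1}:v_\ell\in N_P(v_m)\}\setminus\{v_{j+1}\}$; since you have already disposed of adjacent crossings, these two sets are disjoint, giving length at least $1+d_P(x)+\bigl(d_P(y)-1\bigr)=d_P(x)+d_P(y)$. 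This counting (the analogue of the paper's Observation \ref{ob1}) is missing but routine.

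The genuine gap is the separated case. Writing $g=\max\{i:v_i\in N_P(v_0)\}$ and $h=\min\{j:v_j\in N_P(v_m)\}$ with $g\leq h$, your plan is that ``Menger's theorem supplies a path off $P$ that bridges the two ends'' and lets you splice two short cycles. Two problems. First, one connecting path cannot splice two vertex-disjoint cycles into a single cycle; a cycle through both requires two disjoint connections, so the picture as described cannot be executed. Second, and more seriously, $2$-connectivity does not yield a single bridge from below $g$ to above $h$: it only guarantees that no one vertex of the segment $v_gPv_h$ separates $G$, so each bypass path may land anywhere inside the gap (e.g.\ a hop from $v_{g-1}$ to $v_{g+1}$), and when $g<h$ strictly no single bridge need span the whole gap. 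This is exactly why the paper's Case 3 builds an iterated chain $Q_1,\dots,Q_\ell$ of bridges, each chosen internally disjoint from $P$ with its far endpoint $x_{t_i}$ as large as possible, terminating only when $t_\ell>h$, and then weaves one cycle through $P$ and the $Q_i$ in a parity-dependent order so that it still contains all of $\{v_0\}\cup N_P(v_0)\cup\{v_m\}\cup N_P(v_m)$ (one bridge suffices only in the common-neighbour case $g=h$, the paper's Case 2). Without that iteration, or an equivalent two-disjoint-paths argument with control over where the paths meet $P$, your separated case does not close; the proposal is incomplete precisely at the step you yourself flagged as delicate.
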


We state the refinement of Pos\'{a}'s lemma as follows.

\begin{mydef}\label{fig2}
For integers $k\geq 3$ and $t\geq 2$,
let $P_{2k+t}=x_1\ldots x_{2k+t}$ be a path on $2k +t$ vertices. Let $A=\{x_1,\ldots,x_{k}\}$,
$B=\{x_{2k+t-k+1},\ldots,x_{2k+t}\}$, and $C=\{x_{k+1},\ldots,x_{2k+t-k}\}$. Denote by $F_k(t)$
the graph obtained from $P_{2k+t}$ by adding all edges inside $A$ and $B$, all edges
between $x_k$ and $B$, and all edges between $x_{2k+t-k+1}$ and $A$ (see Figure \ref{fig2}).
\end{mydef}
\begin{center}
\tikzset{every picture/.style={line width=0.75pt}} 
\begin{tikzpicture}[x=0.75pt,y=0.75pt,yscale=-1,xscale=1]

\draw    (53.68,90.29) -- (114.09,90.29) ;
\draw [shift={(114.09,90.29)}, rotate = 0] [color={rgb, 255:red, 0; green, 0; blue, 0 }  ][fill={rgb, 255:red, 0; green, 0; blue, 0 }  ][line width=0.75]      (0, 0) circle [x radius= 1.34, y radius= 1.34]   ;
\draw [shift={(53.68,90.29)}, rotate = 0] [color={rgb, 255:red, 0; green, 0; blue, 0 }  ][fill={rgb, 255:red, 0; green, 0; blue, 0 }  ][line width=0.75]      (0, 0) circle [x radius= 1.34, y radius= 1.34]   ;
\draw    (315.45,90.29) -- (375.85,90.29) ;
\draw [shift={(375.85,90.29)}, rotate = 0] [color={rgb, 255:red, 0; green, 0; blue, 0 }  ][fill={rgb, 255:red, 0; green, 0; blue, 0 }  ][line width=0.75]      (0, 0) circle [x radius= 1.34, y radius= 1.34]   ;
\draw [shift={(315.45,90.29)}, rotate = 0] [color={rgb, 255:red, 0; green, 0; blue, 0 }  ][fill={rgb, 255:red, 0; green, 0; blue, 0 }  ][line width=0.75]      (0, 0) circle [x radius= 1.34, y radius= 1.34]   ;
\draw    (114.09,90.29) -- (124.16,61.54) -- (134.23,32.79) ;
\draw [shift={(134.23,32.79)}, rotate = 289.3] [color={rgb, 255:red, 0; green, 0; blue, 0 }  ][fill={rgb, 255:red, 0; green, 0; blue, 0 }  ][line width=0.75]      (0, 0) circle [x radius= 1.34, y radius= 1.34]   ;
\draw [shift={(114.09,90.29)}, rotate = 289.3] [color={rgb, 255:red, 0; green, 0; blue, 0 }  ][fill={rgb, 255:red, 0; green, 0; blue, 0 }  ][line width=0.75]      (0, 0) circle [x radius= 1.34, y radius= 1.34]   ;
\draw    (174.5,119.04) -- (134.23,32.79) ;
\draw [shift={(134.23,32.79)}, rotate = 244.97] [color={rgb, 255:red, 0; green, 0; blue, 0 }  ][fill={rgb, 255:red, 0; green, 0; blue, 0 }  ][line width=0.75]      (0, 0) circle [x radius= 1.34, y radius= 1.34]   ;
\draw [shift={(174.5,119.04)}, rotate = 244.97] [color={rgb, 255:red, 0; green, 0; blue, 0 }  ][fill={rgb, 255:red, 0; green, 0; blue, 0 }  ][line width=0.75]      (0, 0) circle [x radius= 1.34, y radius= 1.34]   ;
\draw    (255.04,119.04) -- (214.77,119.04) ;
\draw [shift={(214.77,119.04)}, rotate = 180] [color={rgb, 255:red, 0; green, 0; blue, 0 }  ][fill={rgb, 255:red, 0; green, 0; blue, 0 }  ][line width=0.75]      (0, 0) circle [x radius= 1.34, y radius= 1.34]   ;
\draw [shift={(255.04,119.04)}, rotate = 180] [color={rgb, 255:red, 0; green, 0; blue, 0 }  ][fill={rgb, 255:red, 0; green, 0; blue, 0 }  ][line width=0.75]      (0, 0) circle [x radius= 1.34, y radius= 1.34]   ;
\draw    (315.45,90.29) -- (295.31,32.79) ;
\draw [shift={(295.31,32.79)}, rotate = 250.7] [color={rgb, 255:red, 0; green, 0; blue, 0 }  ][fill={rgb, 255:red, 0; green, 0; blue, 0 }  ][line width=0.75]      (0, 0) circle [x radius= 1.34, y radius= 1.34]   ;
\draw [shift={(315.45,90.29)}, rotate = 250.7] [color={rgb, 255:red, 0; green, 0; blue, 0 }  ][fill={rgb, 255:red, 0; green, 0; blue, 0 }  ][line width=0.75]      (0, 0) circle [x radius= 1.34, y radius= 1.34]   ;
\draw    (174.5,119.04) -- (214.77,119.04) ;
\draw [shift={(214.77,119.04)}, rotate = 0] [color={rgb, 255:red, 0; green, 0; blue, 0 }  ][fill={rgb, 255:red, 0; green, 0; blue, 0 }  ][line width=0.75]      (0, 0) circle [x radius= 1.34, y radius= 1.34]   ;
\draw [shift={(174.5,119.04)}, rotate = 0] [color={rgb, 255:red, 0; green, 0; blue, 0 }  ][fill={rgb, 255:red, 0; green, 0; blue, 0 }  ][line width=0.75]      (0, 0) circle [x radius= 1.34, y radius= 1.34]   ;
\draw    (255.04,119.04) -- (295.31,32.79) ;
\draw [shift={(295.31,32.79)}, rotate = 295.03] [color={rgb, 255:red, 0; green, 0; blue, 0 }  ][fill={rgb, 255:red, 0; green, 0; blue, 0 }  ][line width=0.75]      (0, 0) circle [x radius= 1.34, y radius= 1.34]   ;
\draw [shift={(255.04,119.04)}, rotate = 295.03] [color={rgb, 255:red, 0; green, 0; blue, 0 }  ][fill={rgb, 255:red, 0; green, 0; blue, 0 }  ][line width=0.75]      (0, 0) circle [x radius= 1.34, y radius= 1.34]   ;
\draw    (134.23,32.79) -- (375.85,90.29) ;
\draw    (53.68,90.29) -- (93.96,61.54) -- (134.23,32.79) ;
\draw    (53.68,90.29) -- (295.31,32.79) ;
\draw    (114.09,90.29) -- (295.31,32.79) ;
\draw    (375.85,90.29) -- (335.58,61.54) -- (295.31,32.79) ;
\draw    (315.45,90.29) -- (134.23,32.79) ;
\draw    (134.23,32.79) -- (295.31,32.79) ;
\draw   (156.69,119.04) .. controls (156.69,113.93) and (182.69,109.78) .. (214.77,109.78) .. controls (246.85,109.78) and (272.85,113.93) .. (272.85,119.04) .. controls (272.85,124.16) and (246.85,128.31) .. (214.77,128.31) .. controls (182.69,128.31) and (156.69,124.16) .. (156.69,119.04) -- cycle ;

\draw (214.77,131.83) node [anchor=north] [inner sep=0.75pt]   [align=left] {Figure \ref{fig2}. $F_3(3)$};
\draw (133,35.03) node [anchor=south east] [inner sep=0.75pt]  [font=\footnotesize] [align=left] {$x_{k}$};
\draw (297.6,35.03) node [anchor=south west] [inner sep=0.75pt]  [font=\footnotesize] [align=left] {$x_{k+t+1}$};
\draw (92.73,63.89) node [anchor=south east] [inner sep=0.75pt]  [font=\footnotesize] [align=left] {$A$};
\draw (336.81,63.89) node [anchor=south west] [inner sep=0.75pt]  [font=\footnotesize] [align=left] {$B$};
\draw (214.77,110.13) node [anchor=south] [inner sep=0.75pt]  [font=\footnotesize] [align=left] {$C$};
\end{tikzpicture}
\end{center}
\begin{lemma}\label{extend posa lemma}
 Let $k\geq 4$. Let $G$ be a 2-connected graph with $c(G)\leq 2k$ and $P=x_1\ldots x_m$ be a maximum path with $m\geq2k+1$. If
\begin{equation}\label{eq1 for lemma}
d_P(x_1)= k \mbox{ and }d_P(x_{m})= k
\end{equation}
and for each path $P^\prime$ on $m$ vertices in  $G[V(P)]$ with end-vertices $u,v\in N_P(x_1)\cup N_P(x_m)$, we have
\begin{equation}\label{eq2 for lemma}
 d_{P^\prime}(u)= k \mbox{ and } d_{P^\prime}(v)= k,
\end{equation}
then $H_k(s)\subseteq G[V(P)] $ for some $s$ when $m=2k+1$, or $F_k(t) \subseteq G[V(P)]$ for some $t$ when $m\geq 2k+2$.
\end{lemma}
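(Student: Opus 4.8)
The plan is to treat this as the equality (rigidity) case of Pósa's rotation argument: the tightness of the degree hypotheses \eqref{eq1 for lemma}--\eqref{eq2 for lemma} will force the adjacency pattern along $P$ to coincide, up to the two families $H_k(s)$ and $F_k(t)$, with a rigid ``non-crossing'' configuration.

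\textbf{First step (circumference and the non-crossing condition).} I would begin by applying Lemma~\ref{posa lemma} to $P$, which has length $m-1$: this gives $c(G)\ge\min\{m,d_P(x_1)+d_P(x_m)\}=\min\{m,2k\}=2k$, and with the hypothesis $c(G)\le 2k$ we get $c(G)=2k$. Writing $I_1=\{i:x_1x_i\in E(G)\}$ and $I_m=\{i:x_mx_i\in E(G)\}$ (both contained in $V(P)$ since $P$ is maximum), the key consequence is a \emph{non-crossing condition}: there is no index $i$ with $i+1\in I_1$ and $i\in I_m$, for otherwise $x_1x_{i+1}\cdots x_m x_i\cdots x_1$ would be a cycle on all $m\ge 2k+1$ vertices, contradicting $c(G)=2k$. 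Equivalently, $J_1\cap I_m=\emptyset$ where $J_1:=\{i-1:i\in I_1\}$. Since $|J_1|=|I_m|=k$, $J_1,I_m\subseteq\{1,\dots,m-1\}$, $1\in J_1$ and $m-1\in I_m$, these are disjoint $k$-sets; when $m=2k+1$ they \emph{partition} $\{1,\dots,2k\}$, while for $m\ge 2k+2$ a non-empty ``gap'' of unused positions remains.

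\textbf{Second step (rotations preserve tightness).} Next I would run Pósa rotations: each $x_i\in N_P(x_1)$ with $i\ge 3$ produces a new maximum path on $V(P)$ whose left end is $x_{i-1}$, and iterating generates a family of maximum paths $P'$. The role of \eqref{eq2 for lemma} is to keep these rotations rigid: whenever a reachable endpoint $u$ lies in $N_P(x_1)\cup N_P(x_m)$ (which is exactly where the relevant endpoints land), $u$ has degree precisely $k$, so $P'$ satisfies its own non-crossing condition of the same shape as in the first step. My plan is to track how the ``type'' of each position (lying in $J_1$ versus in $I_m$) is transported under an elementary rotation and to superimpose the resulting disjointness constraints over all reachable endpoints. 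This should force the pattern of types along $P$ to be highly structured: a maximal initial $J_1$-run, a maximal terminal $I_m$-run, and in between either a strictly alternating pattern (when $m=2k+1$, since the two $k$-sets leave no room) or two isolated type-switches separated by the gap (when $m\ge 2k+2$).

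\textbf{Third step (decoding the pattern into edges).} I would then convert positional data into adjacencies, splitting on $m$. In all cases the initial $J_1$-run gives $x_1\sim x_2,\dots,x_1\sim x_p$; since each such $x_j\in N_P(x_1)$, rotating it to an endpoint and applying \eqref{eq2 for lemma} yields $\deg(x_j)=k$ together with the same tight non-crossing condition, and comparing these conditions forces $\{x_1,\dots,x_p\}$ to be a clique $A$, with a symmetric clique $B$ from the terminal $I_m$-run. When $m\ge 2k+2$, the two isolated type-switches bounding the gap produce the cross edges $x_m x_k$ and $x_1 x_{m-k+1}$, while rotating within $A$ and $B$ supplies the full joins between $x_k$ and $B$ and between $x_{m-k+1}$ and $A$; the gap is exactly the internal segment $C$, so $G[V(P)]\supseteq F_k(t)$ with $t=m-2k$ (Definition~\ref{fig2}). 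When $m=2k+1$ there is no gap and the middle positions strictly alternate; the tight non-crossing condition attached to each endpoint of $A\cup B$ then forces every vertex of $A\cup B$ to be adjacent to the entire odd-position set $C$, giving the complete bipartite join of Definition~\ref{fig:1} and hence $G[V(P)]\supseteq H_k(s)$ with $s=|A|$.

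\textbf{Main obstacle.} I expect the decisive difficulty to lie in the third step for $m=2k+1$: turning the ``no gap, strictly alternating'' type pattern into the full complete bipartite structure of $H_k(s)$ requires realising each vertex of $A\cup B$ as an endpoint of a genuine \emph{spanning} path whose other end still lies in $N_P(x_1)\cup N_P(x_m)$, so that \eqref{eq2 for lemma} applies, and one must simultaneously check that these rotations are legal and never create a cycle longer than $2k$. Maintaining consistent bookkeeping of positions under compositions of rotations, and disposing of degenerate small-$s$ configurations, is where the argument is most delicate; by contrast, once the gap has been located the $F_k(t)$ case should be comparatively routine.
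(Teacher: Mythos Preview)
Your outline captures the right raw ingredients---the tight condition $J_1\cap I_m=\emptyset$ and the rotation rigidity coming from \eqref{eq2 for lemma}---but there is a genuine gap that rotation bookkeeping alone cannot close. Nothing in Steps~1--3 forces $I_1$ and $I_m$ to \emph{cross}: for $m=2k+1$ take $I_1=\{2,\dots,k{+}1\}$ and $I_m=\{k{+}1,\dots,2k\}$, and put cliques on $\{x_1,\dots,x_{k+1}\}$ and on $\{x_{k+1},\dots,x_{2k+1}\}$. Then $J_1=\{1,\dots,k\}$ and $I_m=\{k{+}1,\dots,2k\}$ are disjoint, every reachable endpoint has degree exactly $k$, and every rotated path satisfies its own non-crossing condition; yet $G[V(P)]$ is two $K_{k+1}$'s sharing a vertex, which is no $H_k(s)$. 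The analogous block configuration for $m\ge 2k+2$ (with $J_1=\{1,\dots,k\}$, $I_m=\{m{-}k,\dots,m{-}1\}$ and a gap between them) likewise survives all your superimposed disjointness constraints, and the cross edges $x_mx_k$, $x_1x_{m-k+1}$ you need for $F_k(t)$ simply are not there. So your asserted pattern (``initial run, strictly alternating, terminal run'' or ``two type-switches'') does not follow from the constraints you have listed.

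What is missing is a second, essential use of $2$-connectivity beyond the one-line appeal to Lemma~\ref{posa lemma}. The paper isolates exactly these ``non-crossing'' configurations as separate cases and kills them by using $2$-connectedness to produce a path $Q$ with $V(Q)\cap V(P)=\{x_s,x_t\}$ straddling the separation, then splicing $Q$ into $P$ to get a cycle of length at least $2k+1$, contradicting $c(G)\le 2k$. Only after a genuine crossing $i<j$ with $x_i\in N_P(x_m)$, $x_j\in N_P(x_1)$ has been forced does the rotation/counting machinery bite---and there the organising device is a minimal such \emph{crossing pair}, whose gap length $j-i=m-2k+1$ and (for $m\ge 2k+2$) uniqueness pin down $F_k(t)$ directly. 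Incidentally, the obstacle you flag is not where the difficulty lies: once the crossing structure is in hand, the full bipartite join in $H_k(s)$ drops out of a two-line count (at most $s-1$ neighbours in the initial block and at most one of every two positions in the middle must sum to exactly $k$), not from delicate rotation tracking.
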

\begin{proof}
Let $N_P^-(x_1)=\{x_i:x_{i+1}\in N_P(x_1)\}$ and $N_P^+(x_{m})=\{x_i:x_{i-1}\in N_P(x_{m})\}$.
    Clearly, we have $|N_P^-(x_1)|=d_P(x_1)$ and $|N_P^+(x_{m})|=d_P(x_m)$. Since $c(G)\leq 2k$, we have
\begin{equation}\label{neighbour of end vertices}
N_P^-(x_1)\cap N_P(x_{m})=\emptyset \mbox{ and }N_P^+(x_{m})\cap N_P(x_1)=\emptyset.
\end{equation}

We will prove the lemma in the following three cases.

\medskip

\noindent{\bf Case 1.} There exist $1\leq i<j\leq m$ such that $x_i\in N_P(x_{m})$ and $x_j\in N_P(x_1)$.

\medskip

We say a pair $(i,j)$ a {\it crossing pair} if $x_i\in N_P(x_{m})$, $x_j\in N_P(x_1)$
and $x_{\ell}\notin  N_P(x_1)\cup N_P(x_{m})$ for $i<\ell<j$.
Let $(i,j)$ be a crossing pair and $U_{i,j}=\{x_{\ell}:i<\ell<j\}$.
We have the following observations:
\begin{observation}\label{ob1}
     $V(x_1Px_i)\cup V(x_jPx_m)=\{x_1\}\cup N_P(x_1)\cup(N_P^+(x_{m})\setminus \{x_{i+1}\})$.

\end{observation}
\begin{proof}
     By (\ref{neighbour of end vertices}), we have $(\{x_1\}\cup N_P(x_1))\cap(N_P^+(x_{m})\setminus \{x_{i+1}\})=\emptyset$, and hence $|\{x_1\}\cup N_P(x_1)\cup(N_P^+(x_{m})\setminus \{x_{i+1}\})| = 1+d_P(x_1)+d_P(x_{m})-1= 2k$.
     Since $x_1x_jPx_{m}x_iPx_1$ is a cycle on at most $2k$ vertices, the result follows.
\end{proof}
\begin{observation}\label{ob2}
We have $j-i=m-2k+1$. If there exists a crossing pair $(i^\prime,j^\prime)\neq (i,j)$,
then $j^\prime=i^\prime+2$, $j=i+2$, and $m=2k+1$.
\end{observation}
\begin{proof}
The sum of lengths of $x_1Px_i$ and $x_jPx_{m}$ is $2k-2$. The length of $P$ is $m-1$. Hence $j-i=(m-1)-(2k-2)=m-2k+1$.

Suppose $j^\prime > i^\prime+2$ such that $x_{\ell}\notin  N_P(x_1)\cup N_P(x_{m})$ for $i^\prime<\ell<j^\prime$ with $(i^\prime,j^\prime)\neq (i,j)$. By Observation \ref{ob1}, we have $x_{i^\prime+2}\in N_P^+(x_{m})\setminus \{x_{i+1}\}$, implying $x_{i^\prime+1}\in N_P(x_{m})$, which is a contradiction.
Thus, $j^\prime = i^\prime+2$, and by symmetry, we have $j=i+2$, implying $m=2k+1$.
\end{proof}

Let $s=\min\{i:x_{i+1}\in N_P(x_m)\}$ and $s^\prime=\min\{i: x_{m-i}\in N_P(x_1)\}$.
For $2\leq q\leq s$, since $P^\prime=x_qPx_1x_{q+1}Px_{m}$ is a path on $m+1$ vertices, by (\ref{eq2 for lemma}), we have $d_{P^\prime}(x_q)= k$.
Suppose that there exists a vertex $x_\ell\in U_{i,j}\cap N_P(x_q)$.
Then $x_q P^\prime x_i x_m P^\prime x_\ell x_q$ is a cycle on at least $2k+1$ vertices,
which is a contradiction. Hence, $U_{i,j}\cap N_P(x_q)=\emptyset$.
Similar to (\ref{neighbour of end vertices}), we have $N_P^+(x_{m})\cap N_P(x_q)=\emptyset$
(clearly, $N_P^+(x_{m})=N_{P^\prime}^+(x_{m})$ $N_P(x_q)=N_{P^\prime}(x_q)$).
We can further get $\{x_q\}\cup N_P(x_q)\cup(N_P^+(x_{m})\setminus \{x_{i+1}\})= V(P)\setminus U_{i,j} = \{x_1\}\cup N_P(x_1)\cup(N_P^+(x_{m})\setminus \{x_{i+1}\})$
as in Observation 1. For $2 \leq q \leq s$,  we have
     \begin{equation}\label{vertices in A and B}
          \{x_q\}\cup N_P(x_q)=\{x_1\}\cup N_P(x_1).
          \end{equation}
          Similarly, for $m-s^\prime+1 \leq q \leq m-1$,  we have
          \begin{equation}\label{vertices in A+ and B+}
               \{x_q\}\cup N_P(x_q)=\{x_m\}\cup N_P(x_m).
               \end{equation}
\begin{claim}\label{claim2}
$x_1$ is not adjacent to both $x_{t}$ and $x_{t+1}$ for $t\geq s+1$ and $x_m$
is not joint to both of $x_{t}$ and $x_{t-1}$ for $t\leq m-s^\prime$.
\end{claim}
\begin{proof}
Suppose that $x_1$ is adjacent to both $x_t$ and $x_{t+1}$. Then by (\ref{vertices in A and B}),
$x_s$ is adjacent to $x_t$ and $x_{t+1}$, and so $x_{m}Px_{t+1}x_1Px_{s}x_tPx_{s+1}x_{m}$ is a
cycle on at least $2k+1$ vertices, a contradiction. Similarly,  $x_m$ is not joint to both
$x_{t}$ and $x_{t-1}$ for $t\leq m-s^\prime$. The proof is complete.
\end{proof}
Now we divide the proof into following cases according to the values of $m$.

\medskip

\noindent{\bf Subcase 1.1.} $m=2k+1$.

\medskip

Let $A=\{x_1,\ldots,x_{s}\}$ and $B=\{x_{2k+1-s^\prime+1},\ldots,x_{2k+1}\}$. Let
$C=\{x_{s+1}, x_{s+3}\ldots,x_p\}$ and $D=\{x_{s+2}, x_{s+4}\ldots,x_q\}$,
where $p=s+2\left\lceil (2k+1-s-s^\prime)/2\right\rceil-1$ and $q=s+2\left\lfloor (2k+1-s-s^\prime)/2\right\rfloor$.
Since $x_1$ is not connected to the vertices in $B$ by the definition of $s^\prime$,
and is not joint to consecutive vertices in $V(P)\backslash (A\cup B)$ by
Claim \ref{claim2}, we can see in $P$, $x_1$ is joint to at most $s-1$ vertices
in $A$ and at most $(2k+1-s-s^\prime+1)/2$ vertices in $P\backslash (A\cup B)$.
Thus, $s-1+(2k+1-s-s^\prime+1)/2\geq d_P(x_1)=k$.
Similarly, for $x_{m}$ we have $s^\prime-1+(2k+1-s-s^\prime+1)/2\geq d_P(x_m)=k$.
Therefore, $s=s^\prime$ and $x_1$ are jointed to each vertex of $A\cup C$,
and $x_{m}$ is jointed to each vertex of $B\cup C$.
By (\ref{vertices in A and B}) and (\ref{vertices in A+ and B+}), $G[A]$ and $G[B]$
are complete graphs, and $G[A\cup B, C]$ is a complete bipartite graph. Thus, $H_k(s)\subseteq G[V(P)]$.

\medskip

\noindent{\bf Subcase 1.2.} $m=2k+t\geq 2k+2$.

\medskip

Let $A=\{x_1,\ldots,x_{k}\}$, $B=\{x_{2k+t-k+1},\ldots,x_{2k+t}\}$, and $C=\{x_{k},\ldots,x_{2k+t-k+1}\}$.
By Observation \ref{ob2},
there is only one crossing pair $(i,j)$.
If there exists $t\in(j,m)$ such that $x_1$ is joint with $x_t$, it follows from Claim \ref{claim2} that $t\geq j+2$. By Observation \ref{ob1}, $x_{j+1}\in N_P^+(x_{m})\setminus \{x_{i+1}\}$, hence $x_j\in N_P(x_{m})$, which implies that there exists another crossing pair $(j,t)$ or a cycle of length at least $2k+1$, both are contradictions.
Therefore, it follows from Observation 1 that $x_{q}\in N_P(x_1)$  for $2\leq q\leq i$ and $x_q\in N_P(x_m)$ for $j\leq q\leq m-1$. Thus, $i=k$ and $j=k+t+1$.
Furthermore, by (\ref{vertices in A and B}) and (\ref{vertices in A+ and B+}), it is easy to see that $F_k(t) \subseteq G[V(P)]$ ($G[A]$ and $G[B]$ are complete graphs).
Thus, we finish the proof of Case 1.

\medskip

The following two cases are well-known results of Pos\'{a}. We include the proofs for completeness.

\medskip

\noindent{\bf Case 2.} Case 1 does not occur and there exists $i$ such that $x_i\in N_P(x_{1})$ and $x_i\in N_P(x_{m})$.

\medskip

Since $G$ is 2-connected, there exists a path $Q$ with $V(Q)\cap V(P)=\{x_s,x_t\}$ and $1\leq s<i<t\leq m$. Let
$$
p=\min\{i>s:x_i\in N_P(x_1)\} \mbox{ and } q=\max\{i<t:x_i\in N_P(x_{m})\}.
$$
Then $x_1Px_sQx_tPx_{m}x_qPx_px_1$ is the cycle that contains $\{x_1\}\cup N_P(x_1)\cup x_{m} \cup N_P(x_{m})$. Hence $G$ contains a cycle on at least $2k+1$ vertices, a contradiction.

\medskip

\noindent{\bf Case 3.} Both Case 1 and Case 2 do not occur.

\medskip

Let $g=\max\{i:x_i\in N_P(x_1)\}$ and $h=\min\{i:i\in N_P(x_{m})\}$. Since $G$ is 2-connected, there exists a path $Q_1$ such that it intersects $P$ with exactly two vertices $x_{s_1}$, $x_{t_1}$, and $s_1<g<t_1$. Choose such a path with $t_1$ as large as possible. If $t_1>h$, then we stop. If $t_1<h$, then we choose a path $Q_2$ such that it intersects $P$ with exactly two vertices $x_{s_2}$, $x_{t_2}$ and $s_2<t_1<t_2$. Choose a path with $t_2$ as large as possible. Since we choose $t_1$ as large as possible, $Q_1\cup Q_2 =\emptyset$. If $t_2>h$, then we stop. Otherwise, we may continue this procedure and obtain a path $Q_{\ell}$ such that it intersects $P$ with exactly two vertices $x_{s_{\ell}}$, $x_{t_{\ell}}$ and $s_{\ell}<t_{\ell-1}<h<t_{\ell}$. Moreover, for any $Q_i$ and $Q_j$ with $i<j$, either $Q_i\cap Q_j =\emptyset$ or $Q_i\cap Q_j=s_j=t_{i}$ for $j=i+2$. Let
$$
i_0=\min\{i>s_1:x_i\in N_P(x_1)\}  \mbox{ and } j_0=\max\{i<t_\ell:x_i\in N_P(x_{m})\}.
$$
If $\ell$ is odd, since Case 2 does not occur and $d_P(x_1)+d_P(x_{m})\geq 2k$,
$$
x_1Px_{s_1}Q_1x_{t_1}Px_{s_3}Q_3x_{t_3}P x_{s_{\ell}}Q_{\ell}x_{t_{\ell}}Px_{m}x_{j_0}Px_{t_{\ell-1}}Q_{\ell-1}x_{s_{\ell-1}}Px_{t_2}Q_2x_{s_2}P x_{i_0}x_1
$$
is a cycle on at least $2k+2$ vertices.
If $\ell$ is even, then
$$
x_1Px_{s_1}Q_1x_{t_1}P x_{s_3}Q_3x_{t_3} P x_{s_{\ell-1}}Q_{\ell-1}x_{t_{\ell-1}}P x_{j_0}x_{m}P x_{t_{\ell}}Q_{\ell}x_{s_{\ell}}P x_{t_4}Q_4x_{s_4}P x_{t_2}Q_2x_{s_2}P x_{i_0} x_1
$$
is a cycle on at least $2k+2$ vertices. Both are contradictions.
The proof is complete.
\end{proof}

\section{Several definitions and two lemmas}\label{Sec:3}
The graphs in Definitions~\ref{fg}, \ref{fig:1}, and \ref{fig2} have good properties for
the proof of our main theorem. We will present those good properties in the following lemmas.

For $H_k(P_3)$, let the smaller part of the vertex sets of $K_{k-1,k+2}$ be
$B=\{b_1,b_2,\ldots,b_{k-1}\}$, the larger one be partitioned into two sets
$A$ and $C$ such that $A=\{a_1,a_2,a_3\}$, $C=\{c_1,c_2,\ldots,c_{k-1}\}$ and
$a_1a_2a_3$ is the $P_3$ embedded in $K_{k-1,k+2}$ (see Figure \ref{fg}).

For $H_k(M_2)$, let the smaller part of the vertex sets of $K_{k-1,k+2}$ be
$C=\{x_3,x_5,\ldots,x_{2k-1}\}$, the larger one be partitioned into three
sets $A$, $B$, and $D$ such that $A=\{x_1,x_2\}$, $B=\{x_{2k},x_{2k+1}\}$,
$D=\{x_{4}$, $x_{6},\ldots,x_{2k-2}\}$, and $\{x_1x_2,x_{2k}x_{2k+1}\}$ is
the $M_2$ embedded in $K_{k-1,k+2}$  (see Figure \ref{fg}).

     \tikzset{every picture/.style={line width=0.75pt}} 
\begin{center}
     \begin{tikzpicture}[x=0.75pt,y=0.75pt,yscale=-1,xscale=1]

     \draw    (66.06,151.08) -- (127.92,151.08) ;
     \draw [shift={(127.92,151.08)}, rotate = 0] [color={rgb, 255:red, 0; green, 0; blue, 0 }  ][fill={rgb, 255:red, 0; green, 0; blue, 0 }  ][line width=0.75]      (0, 0) circle [x radius= 1.34, y radius= 1.34]   ;
     \draw [shift={(96.99,151.08)}, rotate = 0] [color={rgb, 255:red, 0; green, 0; blue, 0 }  ][fill={rgb, 255:red, 0; green, 0; blue, 0 }  ][line width=0.75]      (0, 0) circle [x radius= 1.34, y radius= 1.34]   ;
     \draw [shift={(66.06,151.08)}, rotate = 0] [color={rgb, 255:red, 0; green, 0; blue, 0 }  ][fill={rgb, 255:red, 0; green, 0; blue, 0 }  ][line width=0.75]      (0, 0) circle [x radius= 1.34, y radius= 1.34]   ;
     \draw [draw opacity=0]   (158.85,151.08) -- (220.7,151.08) ;
     \draw [shift={(220.7,151.08)}, rotate = 0] [draw opacity=0][line width=0.75]      (0, 0) circle [x radius= 1.34, y radius= 1.34]   ;
     \draw [shift={(189.77,151.08)}, rotate = 0] [draw opacity=0][line width=0.75]      (0, 0) circle [x radius= 1.34, y radius= 1.34]   ;
     \draw [shift={(158.85,151.08)}, rotate = 0] [draw opacity=0][line width=0.75]      (0, 0) circle [x radius= 1.34, y radius= 1.34]   ;
     \draw [draw opacity=0]   (189.77,151.08) -- (251.63,151.08) ;
     \draw [shift={(251.63,151.08)}, rotate = 0] [draw opacity=0][line width=0.75]      (0, 0) circle [x radius= 1.34, y radius= 1.34]   ;
     \draw [shift={(220.7,151.08)}, rotate = 0] [draw opacity=0][line width=0.75]      (0, 0) circle [x radius= 1.34, y radius= 1.34]   ;
     \draw [shift={(189.77,151.08)}, rotate = 0] [draw opacity=0][line width=0.75]      (0, 0) circle [x radius= 1.34, y radius= 1.34]   ;
     \draw [draw opacity=0]   (103.18,61.36) -- (165.03,61.36) ;
     \draw [shift={(165.03,61.36)}, rotate = 0] [draw opacity=0][line width=0.75]      (0, 0) circle [x radius= 1.34, y radius= 1.34]   ;
     \draw [shift={(134.1,61.36)}, rotate = 0] [draw opacity=0][line width=0.75]      (0, 0) circle [x radius= 1.34, y radius= 1.34]   ;
     \draw [shift={(103.18,61.36)}, rotate = 0] [draw opacity=0][line width=0.75]      (0, 0) circle [x radius= 1.34, y radius= 1.34]   ;
     \draw    (103.18,61.36) -- (189.77,151.08) -- (134.1,61.36) -- (66.06,151.08) -- (195.96,61.36) -- (220.7,151.08) -- (103.18,61.36) -- (158.85,151.08) ;
     \draw    (165.03,61.36) -- (251.63,151.08) ;
     \draw [draw opacity=0]   (134.1,61.36) -- (195.96,61.36) ;
     \draw [shift={(195.96,61.36)}, rotate = 0] [draw opacity=0][line width=0.75]      (0, 0) circle [x radius= 1.34, y radius= 1.34]   ;
     \draw [shift={(165.03,61.36)}, rotate = 0] [draw opacity=0][line width=0.75]      (0, 0) circle [x radius= 1.34, y radius= 1.34]   ;
     \draw [shift={(134.1,61.36)}, rotate = 0] [draw opacity=0][line width=0.75]      (0, 0) circle [x radius= 1.34, y radius= 1.34]   ;
     \draw    (66.06,151.08) -- (103.18,61.36) -- (96.99,151.08) -- (134.1,61.36) -- (127.92,151.08) -- (165.03,61.36) -- (158.85,151.08) -- (195.96,61.36) -- (189.77,151.08) -- (165.03,61.36) -- (220.7,151.08) -- (134.1,61.36) -- (251.63,151.08) -- (195.96,61.36) -- (127.92,151.08) -- (103.18,61.36) -- (251.63,151.08) ;
     \draw    (158.85,151.08) -- (134.1,61.36) ;
     \draw    (66.06,151.08) -- (165.03,61.36) -- (96.99,151.08) -- (195.96,61.36) ;
     \draw   (81.32,60.17) .. controls (81.32,52.74) and (111.78,46.71) .. (149.36,46.71) .. controls (186.94,46.71) and (217.4,52.74) .. (217.4,60.17) .. controls (217.4,67.6) and (186.94,73.63) .. (149.36,73.63) .. controls (111.78,73.63) and (81.32,67.6) .. (81.32,60.17) -- cycle ;
     \draw   (140.29,151.08) .. controls (140.29,145.68) and (167.98,141.3) .. (202.14,141.3) .. controls (236.31,141.3) and (264,145.68) .. (264,151.08) .. controls (264,156.47) and (236.31,160.85) .. (202.14,160.85) .. controls (167.98,160.85) and (140.29,156.47) .. (140.29,151.08) -- cycle ;
     \draw [draw opacity=0]   (326.06,151.08) -- (356.99,151.08) ;
     \draw [shift={(356.99,151.08)}, rotate = 0] [draw opacity=0][line width=0.75]      (0, 0) circle [x radius= 1.34, y radius= 1.34]   ;
     \draw [shift={(341.53,151.08)}, rotate = 0] [draw opacity=0][line width=0.75]      (0, 0) circle [x radius= 1.34, y radius= 1.34]   ;
     \draw [shift={(326.06,151.08)}, rotate = 0] [draw opacity=0][line width=0.75]      (0, 0) circle [x radius= 1.34, y radius= 1.34]   ;
     \draw [draw opacity=0]   (387.92,151.08) -- (449.77,151.08) ;
     \draw [shift={(449.77,151.08)}, rotate = 0] [draw opacity=0][line width=0.75]      (0, 0) circle [x radius= 1.34, y radius= 1.34]   ;
     \draw [shift={(418.85,151.08)}, rotate = 0] [draw opacity=0][line width=0.75]      (0, 0) circle [x radius= 1.34, y radius= 1.34]   ;
     \draw [shift={(387.92,151.08)}, rotate = 0] [draw opacity=0][line width=0.75]      (0, 0) circle [x radius= 1.34, y radius= 1.34]   ;
     \draw [draw opacity=0]   (480.7,151.08) -- (511.63,151.08) ;
     \draw [shift={(511.63,151.08)}, rotate = 0] [draw opacity=0][line width=0.75]      (0, 0) circle [x radius= 1.34, y radius= 1.34]   ;
     \draw [shift={(496.16,151.08)}, rotate = 0] [draw opacity=0][line width=0.75]      (0, 0) circle [x radius= 1.34, y radius= 1.34]   ;
     \draw [shift={(480.7,151.08)}, rotate = 0] [draw opacity=0][line width=0.75]      (0, 0) circle [x radius= 1.34, y radius= 1.34]   ;
     \draw [draw opacity=0]   (363.18,61.36) -- (425.03,61.36) ;
     \draw [shift={(425.03,61.36)}, rotate = 0] [draw opacity=0][line width=0.75]      (0, 0) circle [x radius= 1.34, y radius= 1.34]   ;
     \draw [shift={(394.1,61.36)}, rotate = 0] [draw opacity=0][line width=0.75]      (0, 0) circle [x radius= 1.34, y radius= 1.34]   ;
     \draw [shift={(363.18,61.36)}, rotate = 0] [draw opacity=0][line width=0.75]      (0, 0) circle [x radius= 1.34, y radius= 1.34]   ;
     \draw    (363.18,61.36) -- (449.77,151.08) -- (394.1,61.36) -- (326.06,151.08) -- (455.96,61.36) -- (480.7,151.08) -- (363.18,61.36) -- (418.85,151.08) ;
     \draw    (425.03,61.36) -- (511.63,151.08) ;
     \draw [draw opacity=0]   (394.1,61.36) -- (455.96,61.36) ;
     \draw [shift={(455.96,61.36)}, rotate = 0] [draw opacity=0][line width=0.75]      (0, 0) circle [x radius= 1.34, y radius= 1.34]   ;
     \draw [shift={(425.03,61.36)}, rotate = 0] [draw opacity=0][line width=0.75]      (0, 0) circle [x radius= 1.34, y radius= 1.34]   ;
     \draw [shift={(394.1,61.36)}, rotate = 0] [draw opacity=0][line width=0.75]      (0, 0) circle [x radius= 1.34, y radius= 1.34]   ;
     \draw    (326.06,151.08) -- (363.18,61.36) -- (356.99,151.08) -- (394.1,61.36) -- (387.92,151.08) -- (425.03,61.36) -- (418.85,151.08) -- (455.96,61.36) -- (449.77,151.08) -- (425.03,61.36) -- (480.7,151.08) -- (394.1,61.36) -- (511.63,151.08) -- (455.96,61.36) -- (387.92,151.08) -- (363.18,61.36) -- (511.63,151.08) ;
     \draw    (418.85,151.08) -- (394.1,61.36) ;
     \draw    (326.06,151.08) -- (425.03,61.36) -- (356.99,151.08) -- (455.96,61.36) ;
     \draw   (341.32,60.17) .. controls (341.32,52.74) and (371.78,46.71) .. (409.36,46.71) .. controls (446.94,46.71) and (477.4,52.74) .. (477.4,60.17) .. controls (477.4,67.6) and (446.94,73.63) .. (409.36,73.63) .. controls (371.78,73.63) and (341.32,67.6) .. (341.32,60.17) -- cycle ;
     \draw   (370,150.65) .. controls (370,145.49) and (392.39,141.3) .. (420,141.3) .. controls (447.61,141.3) and (470,145.49) .. (470,150.65) .. controls (470,155.81) and (447.61,160) .. (420,160) .. controls (392.39,160) and (370,155.81) .. (370,150.65) -- cycle ;
     \draw    (326.06,151.08) -- (356.99,151.08) ;
     \draw    (480.7,151.08) -- (511.63,151.08) ;

     \draw (103.18,61.36) node [anchor=south west] [inner sep=0.75pt]  [font=\small] [align=left] {$b_1$};
     \draw (134.1,61.36) node [anchor=south] [inner sep=0.75pt]  [font=\small] [align=left] {$b_2$};
     \draw (172.41,61.36) node [anchor=south east] [inner sep=0.75pt]  [font=\small] [align=left] {$b_3$};
     \draw (195.96,61.36) node [anchor=south east] [inner sep=0.75pt]  [font=\small] [align=left] {$b_4$};
     \draw (62.32,148.71) node [anchor=south east] [inner sep=0.75pt]  [font=\normalsize] [align=left] {$A$};
     \draw (78.43,62) node [anchor=south east] [inner sep=0.75pt]  [font=\normalsize] [align=left] {$B$};
     \draw (264,151.08) node [anchor=south west] [inner sep=0.75pt]  [font=\normalsize] [align=left] {$C$};
     \draw (65.82,155.71) node [anchor=north] [inner sep=0.75pt]  [font=\small] [align=left] {$a_1$};
     \draw (97.82,155.71) node [anchor=north] [inner sep=0.75pt]  [font=\small] [align=left] {$a_2$};
     \draw (158.85,160.61) node [anchor=north] [inner sep=0.75pt]  [font=\small] [align=left] {$c_1$};
     \draw (189.77,160.61) node [anchor=north] [inner sep=0.75pt]  [font=\small] [align=left] {$c_2$};
     \draw (220.7,160.61) node [anchor=north] [inner sep=0.75pt]  [font=\small] [align=left] {$c_3$};
     \draw (251.63,160.61) node [anchor=north] [inner sep=0.75pt]  [font=\small] [align=left] {$c_4$};
     \draw (127.82,155.71) node [anchor=north] [inner sep=0.75pt]  [font=\small] [align=left] {$a_3$};
     \draw (322.32,148.71) node [anchor=south east] [inner sep=0.75pt]  [font=\normalsize] [align=left] {$A$};
     \draw (338.43,62) node [anchor=south east] [inner sep=0.75pt]  [font=\normalsize] [align=left] {$C$};
     \draw (513,148.71) node [anchor=south west] [inner sep=0.75pt]  [font=\normalsize] [align=left] {$B$};
     \draw (326.06,156.61) node [anchor=north] [inner sep=0.75pt]  [font=\small] [align=left] {$x_1$};
     \draw (356.82,156.61) node [anchor=north] [inner sep=0.75pt]  [font=\small] [align=left] {$x_2$};
     \draw (480.7,156.61) node [anchor=north] [inner sep=0.75pt]  [font=\small] [align=left] {$x_{10}$};
     \draw (511.63,156.61) node [anchor=north] [inner sep=0.75pt]  [font=\small] [align=left] {$x_{11}$};
     \draw (387.92,160.61) node [anchor=north] [inner sep=0.75pt]  [font=\small] [align=left] {$x_4$};
     \draw (434.31,145.61) node [anchor=north] [inner sep=0.75pt]  [font=\normalsize] [align=left] {$D$};

     \draw (418.85,160.61) node [anchor=north] [inner sep=0.75pt]  [font=\small] [align=left] {$x_6$};
     \draw (449.77,160.61) node [anchor=north] [inner sep=0.75pt]  [font=\small] [align=left] {$x_8$};
     \draw (363.18,61.36) node [anchor=south west] [inner sep=0.75pt]  [font=\small] [align=left] {$x_3$};
     \draw (394.1,61.36) node [anchor=south] [inner sep=0.75pt]  [font=\small] [align=left] {$x_5$};
     \draw (432.41,61.36) node [anchor=south east] [inner sep=0.75pt]  [font=\small] [align=left] {$x_7$};
     \draw (455.96,61.36) node [anchor=south east] [inner sep=0.75pt]  [font=\small] [align=left] {$x_9$};
     \draw (247,193) node [anchor=north west][inner sep=0.75pt]   [align=left] {{Figure \ref{fg}.}};
     \draw (140.29,177.08) node [anchor=north] [inner sep=0.75pt]   [align=left] {$H_5(P_3)$};
     \draw (419.29,177.08) node [anchor=north] [inner sep=0.75pt]   [align=left] {$H_5(M_2)$};

     \draw [fill={rgb, 255:red, 37; green, 35; blue, 35 }  ,fill opacity=1 ]  (189.77, 151.08) circle [x radius= 1.34, y radius= 1.34]   ;
     \draw [fill={rgb, 255:red, 37; green, 35; blue, 35 }  ,fill opacity=1 ]  (189.77, 151.08) circle [x radius= 1.34, y radius= 1.34]   ;
     \draw [fill={rgb, 255:red, 37; green, 35; blue, 35 }  ,fill opacity=1 ]  (220.7, 151.08) circle [x radius= 1.34, y radius= 1.34]   ;
     \draw [fill={rgb, 255:red, 37; green, 35; blue, 35 }  ,fill opacity=1 ]  (220.7, 151.08) circle [x radius= 1.34, y radius= 1.34]   ;
     \draw [fill={rgb, 255:red, 37; green, 35; blue, 35 }  ,fill opacity=1 ]  (251.63, 151.08) circle [x radius= 1.34, y radius= 1.34]   ;
     \draw [fill={rgb, 255:red, 37; green, 35; blue, 35 }  ,fill opacity=1 ]  (189.77, 151.08) circle [x radius= 1.34, y radius= 1.34]   ;
     \draw [fill={rgb, 255:red, 37; green, 35; blue, 35 }  ,fill opacity=1 ]  (189.77, 151.08) circle [x radius= 1.34, y radius= 1.34]   ;
     \draw [fill={rgb, 255:red, 37; green, 35; blue, 35 }  ,fill opacity=1 ]  (220.7, 151.08) circle [x radius= 1.34, y radius= 1.34]   ;
     \draw [fill={rgb, 255:red, 37; green, 35; blue, 35 }  ,fill opacity=1 ]  (220.7, 151.08) circle [x radius= 1.34, y radius= 1.34]   ;
     \draw [fill={rgb, 255:red, 37; green, 35; blue, 35 }  ,fill opacity=1 ]  (251.63, 151.08) circle [x radius= 1.34, y radius= 1.34]   ;
     \draw [fill={rgb, 255:red, 37; green, 35; blue, 35 }  ,fill opacity=1 ]  (251.63, 151.08) circle [x radius= 1.34, y radius= 1.34]   ;
     \draw [fill={rgb, 255:red, 37; green, 35; blue, 35 }  ,fill opacity=1 ]  (251.63, 151.08) circle [x radius= 1.34, y radius= 1.34]   ;
     \draw [fill={rgb, 255:red, 37; green, 35; blue, 35 }  ,fill opacity=1 ]  (189.77, 151.08) circle [x radius= 1.34, y radius= 1.34]   ;
     \draw [fill={rgb, 255:red, 37; green, 35; blue, 35 }  ,fill opacity=1 ]  (189.77, 151.08) circle [x radius= 1.34, y radius= 1.34]   ;
     \draw [fill={rgb, 255:red, 37; green, 35; blue, 35 }  ,fill opacity=1 ]  (220.7, 151.08) circle [x radius= 1.34, y radius= 1.34]   ;
     \draw [fill={rgb, 255:red, 37; green, 35; blue, 35 }  ,fill opacity=1 ]  (220.7, 151.08) circle [x radius= 1.34, y radius= 1.34]   ;
     \draw [fill={rgb, 255:red, 37; green, 35; blue, 35 }  ,fill opacity=1 ]  (158.85, 151.08) circle [x radius= 1.34, y radius= 1.34]   ;
     \draw [fill={rgb, 255:red, 37; green, 35; blue, 35 }  ,fill opacity=1 ]  (158.85, 151.08) circle [x radius= 1.34, y radius= 1.34]   ;
     \draw [fill={rgb, 255:red, 37; green, 35; blue, 35 }  ,fill opacity=1 ]  (158.85, 151.08) circle [x radius= 1.34, y radius= 1.34]   ;
     \draw [fill={rgb, 255:red, 37; green, 35; blue, 35 }  ,fill opacity=1 ]  (189.77, 151.08) circle [x radius= 1.34, y radius= 1.34]   ;
     \draw [fill={rgb, 255:red, 37; green, 35; blue, 35 }  ,fill opacity=1 ]  (189.77, 151.08) circle [x radius= 1.34, y radius= 1.34]   ;
     \draw [fill={rgb, 255:red, 37; green, 35; blue, 35 }  ,fill opacity=1 ]  (220.7, 151.08) circle [x radius= 1.34, y radius= 1.34]   ;
     \draw [fill={rgb, 255:red, 37; green, 35; blue, 35 }  ,fill opacity=1 ]  (220.7, 151.08) circle [x radius= 1.34, y radius= 1.34]   ;
     \draw [fill={rgb, 255:red, 37; green, 35; blue, 35 }  ,fill opacity=1 ]  (158.85, 151.08) circle [x radius= 1.34, y radius= 1.34]   ;
     \draw [fill={rgb, 255:red, 37; green, 35; blue, 35 }  ,fill opacity=1 ]  (134.1, 61.36) circle [x radius= 1.34, y radius= 1.34]   ;
     \draw [fill={rgb, 255:red, 37; green, 35; blue, 35 }  ,fill opacity=1 ]  (134.1, 61.36) circle [x radius= 1.34, y radius= 1.34]   ;
     \draw [fill={rgb, 255:red, 37; green, 35; blue, 35 }  ,fill opacity=1 ]  (195.96, 61.36) circle [x radius= 1.34, y radius= 1.34]   ;
     \draw [fill={rgb, 255:red, 37; green, 35; blue, 35 }  ,fill opacity=1 ]  (165.03, 61.36) circle [x radius= 1.34, y radius= 1.34]   ;
     \draw [fill={rgb, 255:red, 37; green, 35; blue, 35 }  ,fill opacity=1 ]  (134.1, 61.36) circle [x radius= 1.34, y radius= 1.34]   ;
     \draw [fill={rgb, 255:red, 37; green, 35; blue, 35 }  ,fill opacity=1 ]  (134.1, 61.36) circle [x radius= 1.34, y radius= 1.34]   ;
     \draw [fill={rgb, 255:red, 37; green, 35; blue, 35 }  ,fill opacity=1 ]  (165.03, 61.36) circle [x radius= 1.34, y radius= 1.34]   ;
     \draw [fill={rgb, 255:red, 37; green, 35; blue, 35 }  ,fill opacity=1 ]  (165.03, 61.36) circle [x radius= 1.34, y radius= 1.34]   ;
     \draw [fill={rgb, 255:red, 37; green, 35; blue, 35 }  ,fill opacity=1 ]  (195.96, 61.36) circle [x radius= 1.34, y radius= 1.34]   ;
     \draw [fill={rgb, 255:red, 37; green, 35; blue, 35 }  ,fill opacity=1 ]  (195.96, 61.36) circle [x radius= 1.34, y radius= 1.34]   ;
     \draw [fill={rgb, 255:red, 37; green, 35; blue, 35 }  ,fill opacity=1 ]  (165.03, 61.36) circle [x radius= 1.34, y radius= 1.34]   ;
     \draw [fill={rgb, 255:red, 37; green, 35; blue, 35 }  ,fill opacity=1 ]  (165.03, 61.36) circle [x radius= 1.34, y radius= 1.34]   ;
     \draw [fill={rgb, 255:red, 37; green, 35; blue, 35 }  ,fill opacity=1 ]  (134.1, 61.36) circle [x radius= 1.34, y radius= 1.34]   ;
     \draw [fill={rgb, 255:red, 37; green, 35; blue, 35 }  ,fill opacity=1 ]  (134.1, 61.36) circle [x radius= 1.34, y radius= 1.34]   ;
     \draw [fill={rgb, 255:red, 37; green, 35; blue, 35 }  ,fill opacity=1 ]  (195.96, 61.36) circle [x radius= 1.34, y radius= 1.34]   ;
     \draw [fill={rgb, 255:red, 37; green, 35; blue, 35 }  ,fill opacity=1 ]  (195.96, 61.36) circle [x radius= 1.34, y radius= 1.34]   ;
     \draw [fill={rgb, 255:red, 37; green, 35; blue, 35 }  ,fill opacity=1 ]  (134.1, 61.36) circle [x radius= 1.34, y radius= 1.34]   ;
     \draw [fill={rgb, 255:red, 37; green, 35; blue, 35 }  ,fill opacity=1 ]  (165.03, 61.36) circle [x radius= 1.34, y radius= 1.34]   ;
     \draw [fill={rgb, 255:red, 37; green, 35; blue, 35 }  ,fill opacity=1 ]  (165.03, 61.36) circle [x radius= 1.34, y radius= 1.34]   ;
     \draw [fill={rgb, 255:red, 37; green, 35; blue, 35 }  ,fill opacity=1 ]  (195.96, 61.36) circle [x radius= 1.34, y radius= 1.34]   ;
     \draw [fill={rgb, 255:red, 37; green, 35; blue, 35 }  ,fill opacity=1 ]  (103.18, 61.36) circle [x radius= 1.34, y radius= 1.34]   ;
     \draw [fill={rgb, 255:red, 37; green, 35; blue, 35 }  ,fill opacity=1 ]  (134.1, 61.36) circle [x radius= 1.34, y radius= 1.34]   ;
     \draw [fill={rgb, 255:red, 37; green, 35; blue, 35 }  ,fill opacity=1 ]  (134.1, 61.36) circle [x radius= 1.34, y radius= 1.34]   ;
     \draw [fill={rgb, 255:red, 37; green, 35; blue, 35 }  ,fill opacity=1 ]  (103.18, 61.36) circle [x radius= 1.34, y radius= 1.34]   ;
     \draw [fill={rgb, 255:red, 37; green, 35; blue, 35 }  ,fill opacity=1 ]  (103.18, 61.36) circle [x radius= 1.34, y radius= 1.34]   ;
     \draw [fill={rgb, 255:red, 37; green, 35; blue, 35 }  ,fill opacity=1 ]  (165.03, 61.36) circle [x radius= 1.34, y radius= 1.34]   ;
     \draw [fill={rgb, 255:red, 37; green, 35; blue, 35 }  ,fill opacity=1 ]  (103.18, 61.36) circle [x radius= 1.34, y radius= 1.34]   ;
     \draw [fill={rgb, 255:red, 37; green, 35; blue, 35 }  ,fill opacity=1 ]  (103.18, 61.36) circle [x radius= 1.34, y radius= 1.34]   ;
     \draw [fill={rgb, 255:red, 37; green, 35; blue, 35 }  ,fill opacity=1 ]  (134.1, 61.36) circle [x radius= 1.34, y radius= 1.34]   ;
     \draw [fill={rgb, 255:red, 37; green, 35; blue, 35 }  ,fill opacity=1 ]  (134.1, 61.36) circle [x radius= 1.34, y radius= 1.34]   ;
     \draw [fill={rgb, 255:red, 37; green, 35; blue, 35 }  ,fill opacity=1 ]  (165.03, 61.36) circle [x radius= 1.34, y radius= 1.34]   ;
     \draw [fill={rgb, 255:red, 37; green, 35; blue, 35 }  ,fill opacity=1 ]  (165.03, 61.36) circle [x radius= 1.34, y radius= 1.34]   ;
     \draw [fill={rgb, 255:red, 37; green, 35; blue, 35 }  ,fill opacity=1 ]  (165.03, 61.36) circle [x radius= 1.34, y radius= 1.34]   ;
     \draw [fill={rgb, 255:red, 37; green, 35; blue, 35 }  ,fill opacity=1 ]  (134.1, 61.36) circle [x radius= 1.34, y radius= 1.34]   ;
     \draw [fill={rgb, 255:red, 37; green, 35; blue, 35 }  ,fill opacity=1 ]  (134.1, 61.36) circle [x radius= 1.34, y radius= 1.34]   ;
     \draw [fill={rgb, 255:red, 37; green, 35; blue, 35 }  ,fill opacity=1 ]  (103.18, 61.36) circle [x radius= 1.34, y radius= 1.34]   ;
     \draw [fill={rgb, 255:red, 37; green, 35; blue, 35 }  ,fill opacity=1 ]  (103.18, 61.36) circle [x radius= 1.34, y radius= 1.34]   ;
     \draw [fill={rgb, 255:red, 37; green, 35; blue, 35 }  ,fill opacity=1 ]  (134.1, 61.36) circle [x radius= 1.34, y radius= 1.34]   ;
     \draw [fill={rgb, 255:red, 37; green, 35; blue, 35 }  ,fill opacity=1 ]  (165.03, 61.36) circle [x radius= 1.34, y radius= 1.34]   ;
     \draw [fill={rgb, 255:red, 37; green, 35; blue, 35 }  ,fill opacity=1 ]  (480.7, 151.08) circle [x radius= 1.34, y radius= 1.34]   ;
     \draw [fill={rgb, 255:red, 37; green, 35; blue, 35 }  ,fill opacity=1 ]  (480.7, 151.08) circle [x radius= 1.34, y radius= 1.34]   ;
     \draw [fill={rgb, 255:red, 37; green, 35; blue, 35 }  ,fill opacity=1 ]  (480.7, 151.08) circle [x radius= 1.34, y radius= 1.34]   ;
     \draw [fill={rgb, 255:red, 37; green, 35; blue, 35 }  ,fill opacity=1 ]  (480.7, 151.08) circle [x radius= 1.34, y radius= 1.34]   ;
     \draw [fill={rgb, 255:red, 37; green, 35; blue, 35 }  ,fill opacity=1 ]  (511.63, 151.08) circle [x radius= 1.34, y radius= 1.34]   ;
     \draw [fill={rgb, 255:red, 37; green, 35; blue, 35 }  ,fill opacity=1 ]  (511.63, 151.08) circle [x radius= 1.34, y radius= 1.34]   ;
     \draw [fill={rgb, 255:red, 37; green, 35; blue, 35 }  ,fill opacity=1 ]  (449.77, 151.08) circle [x radius= 1.34, y radius= 1.34]   ;
     \draw [fill={rgb, 255:red, 37; green, 35; blue, 35 }  ,fill opacity=1 ]  (418.85, 151.08) circle [x radius= 1.34, y radius= 1.34]   ;
     \draw [fill={rgb, 255:red, 37; green, 35; blue, 35 }  ,fill opacity=1 ]  (387.92, 151.08) circle [x radius= 1.34, y radius= 1.34]   ;
     \draw [fill={rgb, 255:red, 37; green, 35; blue, 35 }  ,fill opacity=1 ]  (387.92, 151.08) circle [x radius= 1.34, y radius= 1.34]   ;
     \draw [fill={rgb, 255:red, 37; green, 35; blue, 35 }  ,fill opacity=1 ]  (418.85, 151.08) circle [x radius= 1.34, y radius= 1.34]   ;
     \draw [fill={rgb, 255:red, 37; green, 35; blue, 35 }  ,fill opacity=1 ]  (418.85, 151.08) circle [x radius= 1.34, y radius= 1.34]   ;
     \draw [fill={rgb, 255:red, 37; green, 35; blue, 35 }  ,fill opacity=1 ]  (449.77, 151.08) circle [x radius= 1.34, y radius= 1.34]   ;
     \draw [fill={rgb, 255:red, 37; green, 35; blue, 35 }  ,fill opacity=1 ]  (449.77, 151.08) circle [x radius= 1.34, y radius= 1.34]   ;
     \draw [fill={rgb, 255:red, 37; green, 35; blue, 35 }  ,fill opacity=1 ]  (387.92, 151.08) circle [x radius= 1.34, y radius= 1.34]   ;
     \draw [fill={rgb, 255:red, 37; green, 35; blue, 35 }  ,fill opacity=1 ]  (387.92, 151.08) circle [x radius= 1.34, y radius= 1.34]   ;
     \draw [fill={rgb, 255:red, 37; green, 35; blue, 35 }  ,fill opacity=1 ]  (418.85, 151.08) circle [x radius= 1.34, y radius= 1.34]   ;
     \draw [fill={rgb, 255:red, 37; green, 35; blue, 35 }  ,fill opacity=1 ]  (394.1, 61.36) circle [x radius= 1.34, y radius= 1.34]   ;
     \draw [fill={rgb, 255:red, 37; green, 35; blue, 35 }  ,fill opacity=1 ]  (394.1, 61.36) circle [x radius= 1.34, y radius= 1.34]   ;
     \draw [fill={rgb, 255:red, 37; green, 35; blue, 35 }  ,fill opacity=1 ]  (455.96, 61.36) circle [x radius= 1.34, y radius= 1.34]   ;
     \draw [fill={rgb, 255:red, 37; green, 35; blue, 35 }  ,fill opacity=1 ]  (455.96, 61.36) circle [x radius= 1.34, y radius= 1.34]   ;
     \draw [fill={rgb, 255:red, 37; green, 35; blue, 35 }  ,fill opacity=1 ]  (425.03, 61.36) circle [x radius= 1.34, y radius= 1.34]   ;
     \draw [fill={rgb, 255:red, 37; green, 35; blue, 35 }  ,fill opacity=1 ]  (394.1, 61.36) circle [x radius= 1.34, y radius= 1.34]   ;
     \draw [fill={rgb, 255:red, 37; green, 35; blue, 35 }  ,fill opacity=1 ]  (394.1, 61.36) circle [x radius= 1.34, y radius= 1.34]   ;
     \draw [fill={rgb, 255:red, 37; green, 35; blue, 35 }  ,fill opacity=1 ]  (425.03, 61.36) circle [x radius= 1.34, y radius= 1.34]   ;
     \draw [fill={rgb, 255:red, 37; green, 35; blue, 35 }  ,fill opacity=1 ]  (425.03, 61.36) circle [x radius= 1.34, y radius= 1.34]   ;
     \draw [fill={rgb, 255:red, 37; green, 35; blue, 35 }  ,fill opacity=1 ]  (455.96, 61.36) circle [x radius= 1.34, y radius= 1.34]   ;
     \draw [fill={rgb, 255:red, 37; green, 35; blue, 35 }  ,fill opacity=1 ]  (455.96, 61.36) circle [x radius= 1.34, y radius= 1.34]   ;
     \draw [fill={rgb, 255:red, 37; green, 35; blue, 35 }  ,fill opacity=1 ]  (425.03, 61.36) circle [x radius= 1.34, y radius= 1.34]   ;
     \draw [fill={rgb, 255:red, 37; green, 35; blue, 35 }  ,fill opacity=1 ]  (425.03, 61.36) circle [x radius= 1.34, y radius= 1.34]   ;
     \draw [fill={rgb, 255:red, 37; green, 35; blue, 35 }  ,fill opacity=1 ]  (394.1, 61.36) circle [x radius= 1.34, y radius= 1.34]   ;
     \draw [fill={rgb, 255:red, 37; green, 35; blue, 35 }  ,fill opacity=1 ]  (394.1, 61.36) circle [x radius= 1.34, y radius= 1.34]   ;
     \draw [fill={rgb, 255:red, 37; green, 35; blue, 35 }  ,fill opacity=1 ]  (455.96, 61.36) circle [x radius= 1.34, y radius= 1.34]   ;
     \draw [fill={rgb, 255:red, 37; green, 35; blue, 35 }  ,fill opacity=1 ]  (455.96, 61.36) circle [x radius= 1.34, y radius= 1.34]   ;
     \draw [fill={rgb, 255:red, 37; green, 35; blue, 35 }  ,fill opacity=1 ]  (394.1, 61.36) circle [x radius= 1.34, y radius= 1.34]   ;
     \draw [fill={rgb, 255:red, 37; green, 35; blue, 35 }  ,fill opacity=1 ]  (425.03, 61.36) circle [x radius= 1.34, y radius= 1.34]   ;
     \draw [fill={rgb, 255:red, 37; green, 35; blue, 35 }  ,fill opacity=1 ]  (425.03, 61.36) circle [x radius= 1.34, y radius= 1.34]   ;
     \draw [fill={rgb, 255:red, 37; green, 35; blue, 35 }  ,fill opacity=1 ]  (455.96, 61.36) circle [x radius= 1.34, y radius= 1.34]   ;
     \draw [fill={rgb, 255:red, 37; green, 35; blue, 35 }  ,fill opacity=1 ]  (363.18, 61.36) circle [x radius= 1.34, y radius= 1.34]   ;
     \draw [fill={rgb, 255:red, 37; green, 35; blue, 35 }  ,fill opacity=1 ]  (394.1, 61.36) circle [x radius= 1.34, y radius= 1.34]   ;
     \draw [fill={rgb, 255:red, 37; green, 35; blue, 35 }  ,fill opacity=1 ]  (394.1, 61.36) circle [x radius= 1.34, y radius= 1.34]   ;
     \draw [fill={rgb, 255:red, 37; green, 35; blue, 35 }  ,fill opacity=1 ]  (363.18, 61.36) circle [x radius= 1.34, y radius= 1.34]   ;
     \draw [fill={rgb, 255:red, 37; green, 35; blue, 35 }  ,fill opacity=1 ]  (363.18, 61.36) circle [x radius= 1.34, y radius= 1.34]   ;
     \draw [fill={rgb, 255:red, 37; green, 35; blue, 35 }  ,fill opacity=1 ]  (425.03, 61.36) circle [x radius= 1.34, y radius= 1.34]   ;
     \draw [fill={rgb, 255:red, 37; green, 35; blue, 35 }  ,fill opacity=1 ]  (363.18, 61.36) circle [x radius= 1.34, y radius= 1.34]   ;
     \draw [fill={rgb, 255:red, 37; green, 35; blue, 35 }  ,fill opacity=1 ]  (363.18, 61.36) circle [x radius= 1.34, y radius= 1.34]   ;
     \draw [fill={rgb, 255:red, 37; green, 35; blue, 35 }  ,fill opacity=1 ]  (394.1, 61.36) circle [x radius= 1.34, y radius= 1.34]   ;
     \draw [fill={rgb, 255:red, 37; green, 35; blue, 35 }  ,fill opacity=1 ]  (394.1, 61.36) circle [x radius= 1.34, y radius= 1.34]   ;
     \draw [fill={rgb, 255:red, 37; green, 35; blue, 35 }  ,fill opacity=1 ]  (425.03, 61.36) circle [x radius= 1.34, y radius= 1.34]   ;
     \draw [fill={rgb, 255:red, 37; green, 35; blue, 35 }  ,fill opacity=1 ]  (425.03, 61.36) circle [x radius= 1.34, y radius= 1.34]   ;
     \draw [fill={rgb, 255:red, 37; green, 35; blue, 35 }  ,fill opacity=1 ]  (425.03, 61.36) circle [x radius= 1.34, y radius= 1.34]   ;
     \draw [fill={rgb, 255:red, 37; green, 35; blue, 35 }  ,fill opacity=1 ]  (425.03, 61.36) circle [x radius= 1.34, y radius= 1.34]   ;
     \draw [fill={rgb, 255:red, 37; green, 35; blue, 35 }  ,fill opacity=1 ]  (394.1, 61.36) circle [x radius= 1.34, y radius= 1.34]   ;
     \draw [fill={rgb, 255:red, 37; green, 35; blue, 35 }  ,fill opacity=1 ]  (394.1, 61.36) circle [x radius= 1.34, y radius= 1.34]   ;
     \draw [fill={rgb, 255:red, 37; green, 35; blue, 35 }  ,fill opacity=1 ]  (363.18, 61.36) circle [x radius= 1.34, y radius= 1.34]   ;
     \draw [fill={rgb, 255:red, 37; green, 35; blue, 35 }  ,fill opacity=1 ]  (363.18, 61.36) circle [x radius= 1.34, y radius= 1.34]   ;
     \draw [fill={rgb, 255:red, 37; green, 35; blue, 35 }  ,fill opacity=1 ]  (394.1, 61.36) circle [x radius= 1.34, y radius= 1.34]   ;
     \draw [fill={rgb, 255:red, 37; green, 35; blue, 35 }  ,fill opacity=1 ]  (425.03, 61.36) circle [x radius= 1.34, y radius= 1.34]   ;
     \draw [fill={rgb, 255:red, 37; green, 35; blue, 35 }  ,fill opacity=1 ]  (425.03, 61.36) circle [x radius= 1.34, y radius= 1.34]   ;
     \draw [fill={rgb, 255:red, 37; green, 35; blue, 35 }  ,fill opacity=1 ]  (326.06, 151.08) circle [x radius= 1.34, y radius= 1.34]   ;
     \draw [fill={rgb, 255:red, 37; green, 35; blue, 35 }  ,fill opacity=1 ]  (326.06, 151.08) circle [x radius= 1.34, y radius= 1.34]   ;
     \draw [fill={rgb, 255:red, 37; green, 35; blue, 35 }  ,fill opacity=1 ]  (326.06, 151.08) circle [x radius= 1.34, y radius= 1.34]   ;
     \draw [fill={rgb, 255:red, 37; green, 35; blue, 35 }  ,fill opacity=1 ]  (356.99, 151.08) circle [x radius= 1.34, y radius= 1.34]   ;
     \draw [fill={rgb, 255:red, 37; green, 35; blue, 35 }  ,fill opacity=1 ]  (356.99, 151.08) circle [x radius= 1.34, y radius= 1.34]   ;
     \draw [fill={rgb, 255:red, 37; green, 35; blue, 35 }  ,fill opacity=1 ]  (326.06, 151.08) circle [x radius= 1.34, y radius= 1.34]   ;
     \draw [fill={rgb, 255:red, 37; green, 35; blue, 35 }  ,fill opacity=1 ]  (356.99, 151.08) circle [x radius= 1.34, y radius= 1.34]   ;
     \end{tikzpicture}
\end{center}

\begin{lemma}\label{property for H}
Let $k_1\geq 2$, $k_2\geq 1$, $k_1\geq k_2$, and $k=k_1+k_2+1$. If $s\leq k-1$ then $H_k(s)$ and $H_k(M_2)$ satisfy the following.\\
(\romannumeral1) After deleting any vertex of $H_k(1)$ or $H_k(M_2)$, the resulting graph contains $P_{2k_1+1}\cup P_{2k_2+1}$ as a subgraph.\\
(\romannumeral2) For $s\leq k-2$, each edge of $H_k(s)$ ($H_k(M_2)$) is contained in a cycle of length $2k$.\\
(\romannumeral3) After adding any new edge inside $A\cup B\cup D$ of $H_k(s)$ ($H_k(M_2)$), the obtained graph is Hamiltonian.\\
($\romannumeral4$) For $x\in C$ and $y\in D$, if $s\leq k-2$ then there is a path on $2k$ vertices
in $H_k(s)$ ($H_k(M_2)$) starting from $x$ and ending in $y$.
\end{lemma}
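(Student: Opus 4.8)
The plan is to prove the four assertions separately but with a common toolkit: each of them amounts to tracing a long path or cycle that follows the underlying path $P_{2k+1}=x_1\cdots x_{2k+1}$ most of the time and uses the clique-to-$C$ edges only to reroute. I first record the structural facts that drive every construction. In $H_k(s)$ (Definition~\ref{fig:1}) the sets $A$ and $B$ are cliques each completely joined to the independent set $C$, every vertex of $D$ has its only neighbours among the two $C$-vertices flanking it on $P_{2k+1}$, and consequently in $H_k(s)$ the vertices of $D$ are \emph{forced} to lie on the $C$--$D$ zig-zag $x_{s+1}x_{s+2}\cdots x_{2k-s+1}$; in $H_k(M_2)$ (Definition~\ref{fg}) each vertex of $D$ is instead joined to all of $C$, so the graph is $K_{k-1,k+2}$ together with the two embedded edges $x_1x_2$ and $x_{2k}x_{2k+1}$. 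Since $k=k_1+k_2+1$ we have $2k=(2k_1+1)+(2k_2+1)$, so in (i) the two required paths must together span all $2k$ surviving vertices; thus (i) is equivalent to finding, after deleting any single vertex, a spanning union of exactly two paths of orders $2k_1+1$ and $2k_2+1$.

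For (i) I argue by cases on the deleted vertex $v$. If $v$ is a hub of $H_k(1)$ (respectively a vertex of the clique $A\cup B$ of $H_k(M_2)$), then $P_{2k+1}-v$ is already a spanning path $P_{2k}$, which I cut between its $(2k_1+1)$-st and $(2k_1+2)$-nd vertices. If $v\in D$, I reconnect the two stumps $x_{v-1},x_{v+1}$ of $P_{2k+1}-v$ through a hub (an edge from an even endpoint to $A$ or $B$), again obtaining a spanning $P_{2k}$ and cutting it. The substantive case is $v\in C$: deleting $x_v$ splits $P_{2k+1}$ into two paths of odd orders $v-1$ and $2k+1-v$, which gives two odd paths but usually of the wrong orders. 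Here I use the clique--$C$ edges to slide the cut: a tail $x_{2j}x_{2j+1}\cdots$ of the longer stump is detached at its even endpoint $x_{2j}$ and reattached to the hub of the other stump via the edge $x_{2j}x_{2k+1}$ (or $x_{2j}x_1$), which shifts the split by a controllable even amount; choosing $j=k_1+1$ realises exactly the orders $2k_1+1$ and $2k_2+1$. A short check confirms that this reroute is available for every position of $x_v$, including the boundary deletions $x_2$ and $x_{2k}$. For $H_k(M_2)$ the same scheme is easier, since the vertices of $C$ act as universal connectors and the two embedded edges provide the two ``same-side'' steps needed to fix both orders and parities.

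For (ii)--(iv) I give direct constructions from the principle that the zig-zag on $C\cup D$ is the backbone while $A,B$ attach through $C$. For (iii) I first observe that $H_k(s)$ is non-Hamiltonian precisely because the forced zig-zag saturates every interior vertex of $C$, leaving no way to pass between the two cliques $A$ and $B$; adding any edge inside $A\cup B\cup D$ destroys this obstruction (an $A$--$B$ edge joins the cliques directly, while any edge meeting $D$ releases a spacer from being forced), and in each case I close a Hamilton cycle. For (ii), given an edge I build a cycle missing exactly one vertex by omitting a vertex of $D$ (which exists since $s\le k-2$): this breaks the zig-zag at one place, freeing two $C$-vertices to route into $A$ and into $B$, and I thread the prescribed edge into the resulting $2k$-cycle. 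For (iv) I construct the $(x,y)$-path on $2k$ vertices, $x\in C$, $y\in D$, by starting the zig-zag at $y$, running it to $x$, and attaching the two cliques through their $C$-neighbours, again omitting one vertex.

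The main obstacle is assertion (i), and within it the exact-order bookkeeping for $v\in C$: one must verify that the sliding reroute attains the prescribed pair $(2k_1+1,2k_2+1)$ for \emph{every} position of the deleted $C$-vertex. This is tightest for $H_k(1)$, where the cliques $A$ and $B$ degenerate to single hubs, so the only reroutes available pass through those two vertices; checking the boundary positions (deletions near the ends of $P_{2k+1}$ and the extreme case $k_2=1$) is the delicate part. Once $H_k(1)$ is settled, the additional $C$--$D$ and clique edges of $H_k(M_2)$ make that model strictly easier.
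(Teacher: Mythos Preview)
Your plan is correct and follows essentially the same route as the paper: both treat (i)--(iv) by direct case-by-case constructions along the underlying path $P_{2k+1}$, using the clique--$C$ edges (to the hubs $x_1,x_{2k+1}$ in $H_k(1)$, or the full bipartite structure in $H_k(M_2)$) as reroutes; in (i) both split on whether the deleted vertex lies in $A\cup B\cup D$ (giving a spanning $P_{2k}$, possibly after one reroute) or in $C$ (where the two odd stumps are rebalanced via an edge $x_{2j}x_{2k+1}$ or $x_{2j}x_1$), and (ii)--(iv) are handled by omitting a single $D$-vertex to free two $C$-vertices, exactly as you describe. One small slip: in your sliding step for (i) the index that realises the pair $(2k_1+1,2k_2+1)$ is $j=k_2+1$ (keeping $x_1\cdots x_{2k_2+1}$ as the short path) rather than $j=k_1+1$, which only works when $i\ge k_1+1$; the paper uses $j=k_2+1$ for $i\ge k_2+1$ and the symmetric choice for $i\le k_2$, so your ``short check'' will need both.
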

\begin{proof}
For $H_k(1)$, if we delete any vertex in $A\cup B\cup D$, then it is easy to
check that the resulting graph contains $P_{2k_1+1}\cup P_{2k_2+1}\subseteq P_{2k}$
as a subgraph. If we delete $x_{2i}$ for $i\geq k_2+1$, then $H_k(1)$ contains
$P_{2k_2+1}=x_1P x_{2k_2+1}$ and $P_{2k_1+1}=x_{2i+1}Px_{2k+1}x_{2k_2+2}Px_{2i-1}$
as a subgraph. If $i\leq k_2$, similarly, $H_k(1)$ contains $P_{2k_1+1}\cup P_{2k_2+1}$
as a subgraph.

For $H_k(M_2)$, we have $|D|=k-2\geq 2$. If we delete any vertex in $A\cup B\cup D$
of $H$, then $H_k(M_2)$ contains $P_{2k_1+1}\cup P_{2k_2+1}\subseteq P_{2k}$ as a subgraph.
If we delete $x_{2i+1}\in C$ for $1\leq i\leq k-1 $,
then $H_k(M_2)[D\cup C\setminus \{x_{2i+1}\}]$ contains a path through $C$ and
$D$ on $2k-4$ vertices which starts at $C$ and ends in $D$. Hence
$H_k(M_2)[D\cup C \setminus \{x_{2i+1}\}]$ contains a path on $2k_2+1$ vertices starting and ending in $D$ and a path on $2k_1-3$ vertices starting and ending in $C$. Since $x_1,x_2,x_{2k},x_{2k+1}$ are joint to every vertex of $C$, $H_k(M_2)$ contains $P_{2k_1+1}\cup P_{2k_2+1}$ as a subgraph. Thus we finish the proof of $(\romannumeral1)$.

First we show that each edge of $H_k(s)$ is contained in a $C_{2k}$ for $s\leq k-2$.
Since $s\leq k-2$, $|D|=k-s\geq 2$. For the edges inside $A$ and $B$, without
loss of generality, $x_1x_2$ is contained in $x_1x_2Px_{2k-s-1}x_{2k+1}Px_{2k-s+1}x_1$.
For the edges between $A\cup B$ and $C$, without loss of generality, $x_1x_{s+2i+1}$
is contained in $x_1Px_{s+2i-1}x_{2k+1}Px_{s+2i+1}x_1$ for $i=1,\ldots, k-s$. In
addition, $x_1x_{s+2i+1}$ is contained in
$x_1x_{s+1}Px_{2k-s-1}x_{2k+1}Px_{2k-s+1}x_k\ldots x_1$ for $i=0$. For the edges
between $C$ and $D$, each edge is contained in either  $x_{s+3}Px_{2k+1}x_{s+1}Px_1 x_{s+3}$
or $x_{2k-s-1}Px_1x_{2k-s+1}Px_{2k+1}x_{2k-s-1}$.

For $H_k(M_2)$, we have $|D|=k-2\geq 2$. Since there is only one missing edge
between $C$ and $D$, any edge between $C$ and $D$ is contained in a Hamilton path in
$H_k(M_2)[C\cup D]$. Thus, each edge of $H_k(M_2)$ is contained in a copy of $H_k(2)$ in
$H_k(M_2)$, the result follows from the previous proof. Hence we finish the proof of $(\romannumeral2)$.

If we adding an edge between $A$ and $B$, the obtained graph is obviously Hamiltonian.
If we add an edge $x_{s+2i}x_{s+2j}$ for $1\leq i<j\leq k-s+1$ inside $D$, then
$x_{s+2i}P x_1x_{s+2i+1}Px_{s+2j-1}x_{2k+1}Px_{s+2j}x_{s+2i}$
is a Hamilton cycle. If we add an edge $x_ix_{s+2j}$ for $1\leq i\leq s$ and
$1\leq j\leq k-s$ between $A$ and $D$, then
$x_iPx_1x_{i+1}Px_{s+2j-1}x_{2k+1}P$ $x_{s+2j}x_i$ is a Hamilton cycle. If we
add an edge $x_ix_{s+2j}$ for $2k-s+2\leq i\leq 2k+1$ and $1\leq j\leq k-s$ between $B$ and $D$, then
$x_iPx_mx_{i-1}Px_{s+2j+1}x_{1}Px_{s+2j}x_i$ is a Hamilton cycle. The proof of
$(\romannumeral3)$ is complete.

For $x=x_{s+2i-1}$ and $y=x_{s+2j}$, where $s<k-1$. If $i\leq j-1$, then
$x_{s+2i-1}Px_1x_{s+2i+1}P$
$x_{s+2j-1}x_{2k+1}Px_{s+2j}$ is an $(x,y)$-path on $2k$ vertices. If
$i\geq j+2$, then  $x_{s+2j}Px_1x_{s+2j+1}P$
$x_{s+2i-1}Px_{2k+1}x_{s+2i-3}$ is an $(x,y)$-path on $2k$ vertices.
If $i=j>1$, then  $x_{s+2i-1}Px_{s+3}x_{1}P$
$x_{s+1}x_{2k+1}Px_{s+2j}$ is an $(x,y)$-path on $2k$ vertices. If
$i=j=1$, then  $x_{s+1}Px_1x_{s+5}Px_{2k+1}$
$x_{s+3}x_{s+2}$ is an  $(x,y)$-path on $2k$ vertices.
if $i=j+1=2$, then  $x_{s+3}x_{2k+1}Px_{s+5}x_{1}Px_{s+2}$ is an
$(x,y)$-path on $2k$ vertices. If $i=j+1>2$, then  $x_{s+2i-1}Px_{2k+1}x_{s+1}Px_{1}x_{s+3}Px_{s+2j}$
is an $(x,y)$-path on $2k$ vertices.
We finish the proof of $(\romannumeral4)$.
\end{proof}

\begin{lemma}\label{H(2k+1)}
Let $k_1\geq k_2\geq 2$ and $k=k_1+k_2+1$. Then $H_k(P_3)$ satisfies the following.\\
(\romannumeral1) After deleting any vertex of $H_k(P_3)$, the obtained graph contains $P_{2k_1+1}\cup P_{2k_2+1}$ as a subgraph.\\
(\romannumeral2) Each edge of $H_k(P_3)$ except for the edges between $a_2$ and $B$ is contained
in a cycle of length $2k$. Moreover, there is a path on $2k$ vertices starting from $a_1$ and ending in $a_3$.\\
(\romannumeral3) After adding any new edge except for $a_1a_3$ inside $(A\setminus\{a_2\})\cup C$
of $H_k(P_3)$, the obtained graphs are both Hamiltonian. Moreover, for $z\in C$, there is a path
on $2k$ vertices starting from $a_2$ and ending in $z$.\\
(\romannumeral4) For $x\in B\cup \{a_2\}$ and $y\in B\cup \{a_2\}$, there is a path on $2k-1$
vertices in $H_k(P_3)$ starting from $x$ and ending in $y$. Moreover, if $a_1$ is joint to $a_3$,
then there is a path on $2k$ vertices in $H_k(P_3)$ starting from $B$ and ending in $a_2$.
\end{lemma}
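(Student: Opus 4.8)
The plan is to exploit the bipartite backbone of $H_k(P_3)$: it is the complete bipartite graph $K_{k-1,k+2}$ with parts $B$ (of size $k-1$) and $A\cup C$ (of size $k+2$), augmented only by the two edges $a_1a_2$ and $a_2a_3$. Since $B$ is an independent set that is exactly one vertex short of what a balanced path or cycle with both ends in the larger part would demand, the short path $a_1a_2a_3$ (or, once $a_1a_3$ is added, the triangle on $A$) is forced to act as a parity/count corrector: traversing it inserts two or three consecutive vertices of $A\cup C$ and thereby saves one or two vertices of $B$. Almost every assertion then reduces to a routine construction in a complete bipartite graph, which I would handle by contracting the relevant $A$-segment to a single vertex of the larger part and invoking the obvious existence of (near-)spanning paths and cycles in $K_{p,p+1}$ and $K_{p,p}$.

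For $(\romannumeral1)$ I would split on the type of the deleted vertex. If a vertex of $C$, of $\{a_1,a_3\}$, or $a_2$ is removed, the larger part drops to size $k+1$ while $|B|=k-1$, and each surviving vertex of $A\cup C$ is still adjacent to all of $B$; so the graph contains a spanning $K_{k-1,k+1}$, and I would simply exhibit two vertex-disjoint balanced paths $P_{2k_1+1}$ and $P_{2k_2+1}$ with all four endpoints in the larger part, using $(k_1{+}1)+(k_2{+}1)=k+1$ larger vertices and $k_1+k_2=k-1$ vertices of $B$. The genuinely different case is the deletion of a vertex of $B$: now $|B|=k-2$ is too small for two balanced paths, so I would route the shorter path as $P_{2k_2+1}=a_1a_2a_3\,b\,c\,b\,c\cdots$, using the whole segment $a_1a_2a_3$ together with $k_2-1$ vertices of $B$ and $k_2-1$ vertices of $C$, and let $P_{2k_1+1}$ be an ordinary balanced bipartite path on the remaining $k_1+1$ vertices of $C$ and $k_1$ vertices of $B$; a short count shows the two paths use exactly the available vertices and are disjoint.

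For $(\romannumeral2)$ and $(\romannumeral3)$ the same counting principle pins down the global shape. A $C_{2k}$ omits one vertex, and counting the edges incident to the independent set $B$ forces it to use all $k-1$ vertices of $B$ together with exactly two edges inside $A\cup C$, hence both $a_1a_2$ and $a_2a_3$; thus $a_2$ lies on every such cycle with neighbours exactly $a_1,a_3$, which explains why the edges between $a_2$ and $B$ lie on no $C_{2k}$. After suppressing $a_2$, a $C_{2k}$ through a prescribed $B$-edge becomes an $(a_1,a_3)$-Hamilton path of a complete bipartite graph $K_{k-1,k}$ containing that edge, which I would write down directly; the $(a_1,a_3)$-path on $2k$ vertices is obtained by prepending $a_1a_2$ to a bipartite $a_2$--$a_3$ path. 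For $(\romannumeral3)$ the same edge count shows that a Hamilton cycle must use all three edges inside $A\cup C$, namely $a_1a_2$, $a_2a_3$ and the newly added edge $e$; if $e=a_1a_3$ these three form a triangle and no Hamilton cycle can exist, whereas for any other $e$ the three internal edges merge into a single block (such as $c_i a_1a_2a_3$, or $a_1a_2a_3$ together with $c_ic_j$), and contracting each block to one larger vertex leaves a balanced complete bipartite graph $K_{k-1,k-1}$ in which a Hamilton cycle exists and can be lifted back. The $(a_2,z)$-path on $2k$ vertices is again a $P_3$-edge prepended to a bipartite path.

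Part $(\romannumeral4)$ is handled by the same device with the endpoints prescribed inside $B\cup\{a_2\}$: for $x,y\in B$ I would place the whole segment $a_1a_2a_3$ in the interior of a path through all of $B$ and $k-3$ vertices of $C$ (valid since $k=k_1+k_2+1\geq 5$); for $x\in B,\ y=a_2$ I would terminate the path with the edge $a_1a_2$, preceded by a bipartite $x$--$a_1$ path; and when $a_1a_3$ is present, the triangle on $A$ supplies the extra consecutive larger vertices needed for a $B$--$a_2$ path on the full $2k$ vertices. I expect the main obstacle to be the $B$-deletion case of $(\romannumeral1)$ together with the prescribed-endpoint constructions of $(\romannumeral4)$: there the smaller part is one or two vertices short of the balanced count, so one must verify that the $A$-segment (or triangle) can be inserted with exactly the right length while keeping the paths vertex-disjoint and landing on the prescribed endpoints. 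All the remaining cases become bookkeeping once the complete-bipartite reduction and the ``$a_1a_2a_3$ saves a $B$-vertex'' principle are in place.
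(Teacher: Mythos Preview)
Your proposal is correct and takes a genuinely different, more structural route than the paper. The paper proves each item by writing down explicit paths and cycles case by case (e.g., for the $B$-deletion case of (\romannumeral1) it exhibits a specific $(2k-3)$-path in $H_k(P_3)[B\setminus\{b_i\}\cup C]$ and prepends $a_1a_2a_3$; for (\romannumeral2)--(\romannumeral4) it simply names the required cycles and paths vertex by vertex). You instead exploit the bipartite skeleton $K_{k-1,k+2}$ together with an edge-count on the independent side $B$: this forces any $C_{2k}$ to use both of $a_1a_2,a_2a_3$ and omit a vertex of $C$, and any Hamilton cycle after adding $e$ to use all three internal edges, so the $a_2$--$B$ exclusion in (\romannumeral2) and the $a_1a_3$ exclusion in (\romannumeral3) fall out as necessary conditions rather than unchecked side remarks. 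The remaining existence statements then reduce, after contracting the $A$-segment (or $A$-triangle), to Hamilton paths or cycles in $K_{k-1,k-1}$ or $K_{k-1,k}$ with prescribed endpoints and at most one prescribed edge, which is routine.

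What your approach buys is an explanation of the exceptional cases and a uniform mechanism (``$a_1a_2a_3$ saves a $B$-vertex'') that replaces several ad hoc constructions; what the paper's approach buys is that every required path is literally written out, so no appeal to existence in complete bipartite graphs is needed. Both are equally valid; your version would benefit from one explicit sentence recording that an $(x,y)$-Hamilton path through a prescribed edge exists in $K_{p,p}$ and $K_{p,p+1}$ whenever the endpoint parities are compatible, since you invoke this repeatedly.
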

\begin{proof}
Note that there is a path on $2k+1$ vertices from any vertex in $C$ to any vertex in $\{a_1,a_3\}$.
Thus, if we delete any vertex in $\{a_1,a_3\}\cup C$, then the obtained graph contains
$P_{2k_1+1}\cup P_{2k_2+1}\subseteq P_{2k}$ as a subgraph.
If we delete $a_2\in A$, since $H_k(P_3)[B\cup C]$ contains a path $P_0$ on $2k_1$
vertices starting from $C$ ending in $B$ and a path $P_1$ on $2k_2$ vertices starting
from $C$ ending in $B$, then $a_1P_0$ is a path on $2k_1+1$ vertices and $a_3P_1$ is a path on $2k_2+1$ vertices.
Hence, $H_k(P_3)\setminus \{a_{2}\}$ contains $P_{2k_1+1}\cup P_{2k_2+1}$ as a subgraph.
If we delete $b_{i}\in B$ for $1\leq i\leq k-1$, then $H_k(P_3)[B\setminus \{b_{i}\}\cup C]$ contains
a path on $2k-3$ vertices from $C$ to $B$.
Hence $H_k(P_3)[B\setminus\{b_{i}\}\cup C]$ contains a path on $2k_1+1$ vertices starting and
ending in $C$ and a path $P_2$ on $2k_2-2$ vertices starting in $C$ and ending in $B$. Since $a_3$ is joint to every vertex of $B$, $a_1a_2a_3P_2$ is a path on $2k_2+1$.
Hence $H_k(P_3)\setminus \{b_{i}\}$ contains $P_{2k_1+1}\cup P_{2k_2+1}$ as a subgraph and
we finish the proof of $(\romannumeral1)$.

Note that there is a cycle starting  $a_1a_2a_3$, through $B$ and $C$ alternately, and ending at $a_1$.
By symmetry, the edges between $B$ and $\{a_1,a_3\}\cup C$ are contained in a cycle of length $2k$.
We finish the proof of $(\romannumeral2)$.

For $H_k(P_3)$, note that there is a path on $2k+1$ vertices from any vertex in $C$ to any vertex in $A\setminus\{a_2\}$.
Thus, if we add an edge between $A\setminus\{a_2\}$ and $C$, the obtained graph is obviously Hamiltonian.
Without loss of generality, if we add an edge $c_1c_2$ inside $C$, then
$c_1c_2b_2c_3\ldots c_{k-1}b_{k-1}a_1a_2a_3b_1c_1$ is a Hamilton cycle.
Additionally, for $z\in C$, there is a path $P_3$ on $2k-2$ vertices in
$H_k(P_3)[B\cup C]$ starting from $B$ and ending in $z$.
Then $a_2a_1P_3z$ is a path on $2k$ vertices starting at $a_2$ and ending in $z$.
The proof of $(\romannumeral3)$ is complete.

For $x=b_{i}$ and $y=b_{j}$ ($i\neq j$), let $1\leq z\leq k-1$, $z\neq i$ and $z\neq j$.
Then $H_k(P_3)[C\cup B\setminus \{b_j\}]$ contains a path $P_4$ on $2k-5$ vertices from $b_i$ to $b_z$.
Then $b_iP_4b_za_1a_2a_3b_j$ is a path on $2k-1$ vertices starting at $x$ and ending in $y$.
For $x=b_{i}$ and $y=a_2$, $H_k(P_3)[B\cup C]$ contains a path $P_5$ on $2k-3$ vertices from $B$ to $b_i$.
Then, $b_iP_5a_3a_2$ is a path on $2k-1$ vertices from $x$ and ending in $y$.
If $a_1$ is joint to $a_3$, then $b_iP_5a_3a_1a_2$ is a path on $2k$ vertices from $B$ and ending in $a_2$.
We finish the proof of $(\romannumeral4)$.
\end{proof}

\begin{proposition}\label{eq-1}
For $k_1\geq k_2\geq 2$ and $2k_1+2k_2+2\leq |C|\leq n$, we have
\begin{equation*}
h(|C|,2k_1+2k_2+1,1)+\mathrm{ex}(n-|C|,P_{2k_2+1})<c(n,2k_1+1,2k_2+1).
\end{equation*}

\end{proposition}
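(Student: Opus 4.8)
The plan is to turn the inequality into a purely arithmetic comparison by writing each of the three quantities in closed form and then exploiting the block structure of the path Tur\'an function. First I would record the decomposition
\[
c(n,m,2k_2+1)=\binom{m-1}{2}+\mathrm{ex}\bigl(n-m+1,\,P_{2k_2+1}\bigr),
\]
which is immediate from the definition of $c(n,m,\ell)$ and Theorem~\ref{extrem}: deleting the clique $K_{m-1}$ leaves $n-m+1$ vertices packed into disjoint copies of $K_{2k_2}$ together with one smaller clique, i.e.\ exactly an extremal $P_{2k_2+1}$-free graph. Using this identity for the right-hand side, together with the explicit value $h(|C|,2k_1+2k_2+1,1)=\binom{2k_1+2k_2}{2}+(|C|-2k_1-2k_2)$ coming from Definition~\ref{fig:3}, the whole statement collapses to an inequality involving only two clique terms, the linear term $|C|-2k_1-2k_2$, and a single difference of the form $\mathrm{ex}(\cdot,P_{2k_2+1})-\mathrm{ex}(\cdot,P_{2k_2+1})$ whose two arguments differ by a controlled number of vertices.

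The engine of the proof is the elementary behaviour of $g(m):=\mathrm{ex}(m,P_{2k_2+1})$. Writing $m=s\cdot 2k_2+r$ with $0\le r\le 2k_2-1$, Theorem~\ref{extrem} gives $g(m)=s\binom{2k_2}{2}+\binom{r}{2}$, from which two facts follow: a full block of $2k_2$ extra vertices raises $g$ by exactly $\binom{2k_2}{2}$, and the one-vertex increments $g(m+1)-g(m)$ run cyclically through $0,1,\dots,2k_2-1$. Consequently the average gain of $g$ is $\binom{2k_2}{2}/(2k_2)=(2k_2-1)/2$ edges per vertex, and over any window of $g$ consecutive vertices the total gain is at least $\lfloor g/(2k_2)\rfloor\binom{2k_2}{2}$ plus the minimal partial contribution $\binom{g\bmod 2k_2}{2}$. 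These estimates pin the relevant difference of path Tur\'an numbers from below, independently of how the block boundaries align with $n$.

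Assembling these pieces reduces the proposition to a clean inequality in $k_1,k_2,|C|,n$, and here the decisive observation is a comparison of linear growth rates: the $H$-term contributes only $1$ edge per vertex of $C$, whereas the path-free region gains $(2k_2-1)/2\ge 3/2>1$ edges per vertex since $k_2\ge 2$. Thus the path-Tur\'an difference outgrows the linear part of the $H$-term as the gap of vertices increases, while the clique terms account for the remaining additive constants; because the residual expression is monotone in the free parameter, it suffices to verify strictness at the boundary $|C|=2k_1+2k_2+2$, where the estimates above leave a definite margin. I expect the main obstacle to be purely bookkeeping rather than conceptual: one must simultaneously control the three remainder terms $\binom{r}{2}$ (one from $h$, one from $\mathrm{ex}(n-|C|,P_{2k_2+1})$, and one from the clique part of $c$) in the least favourable alignment of the block structure, so that the inequality remains \emph{strict} and not merely non-strict across the whole admissible range of $|C|$ and $n$.
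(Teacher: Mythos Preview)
Your proposal is correct and follows essentially the same route as the paper: rewrite $c(n,2k_1+1,2k_2+1)=\binom{2k_1+2k_2+1}{2}+\mathrm{ex}(n-2k_1-2k_2-1,P_{2k_2+1})$, expand $h(|C|,2k_1+2k_2+1,1)$, and reduce the statement to showing that the growth of $\mathrm{ex}(\cdot,P_{2k_2+1})$ over a window of $x=|C|-2k_1-2k_2-1$ vertices beats the linear term $x$ minus a positive constant, using the block formula from Theorem~\ref{extrem} and the fact that the average gain $(2k_2-1)/2>1$ when $k_2\ge 2$. The paper executes this by splitting on whether $x\le 2k_1+2k_2-1$ (where the constant $-(2k_1+2k_2-1)$ alone gives strictness) or $x$ is larger (where a direct calculation with the block decomposition shows $\mathrm{ex}(n-|C|+x,P_{2k_2+1})-\mathrm{ex}(n-|C|,P_{2k_2+1})>x$); your monotonicity-plus-boundary-check and the remainder bookkeeping you anticipate amount to the same computation.
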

\begin{proof}
If $|C|\geq 2k_1+2k_2+2$, then
\begin{align}\label{adddddd}
&h(|C|,2k_1+2k_2+1,1)+\mathrm{ex}(n-|C|,P_{2k_2+1})\\\nonumber
=& {2k_1+2k_2 \choose 2}  +  |C|-2k_1-2k_2 +\mathrm{ex}(n-|C|,P_{2k_2+1})   \\\nonumber
=& {2k_1+2k_2+1 \choose 2}  +  |C|-4k_1-4k_2 +\mathrm{ex}(n-|C|,P_{2k_2+1})   \\\nonumber
=& {2k_1+2k_2+1 \choose 2}  -(2k_1+2k_2-1)  +(|C|-2k_1-2k_2-1) +\mathrm{ex}(n-|C|,P_{2k_2+1}).
\end{align}

Suppose $x=|C|-2k_1-2k_2-1$. We will prove that
$$
\mathrm{ex}(n-|C|,P_{2k_2+1})+x-(2k_1-2k_2-1)<\mathrm{ex}(n-|C|+x,P_{2k_2+1}).
$$
If $0\leq x\leq 2k_1+2k_2-1$, the conclusion is clearly true.

If $x\geq 2k_1+2k_2-1\geq 7$, since $k_1\geq k_2\geq 2$, we have $x\geq 2k_2+1$.
By Theorem~\ref{extrem}, suppose $n-|C|=2sk_2+r$ and $0\leq r\leq 2k_2-1$, then
$$
\mathrm{ex}(n-|C|,P_{2k_2+1})=s{2k_2\choose 2}+{r\choose 2}.
$$
Suppose $x+r=2s^\prime k_2+r_0$, where $0\leq r_0\leq 2k_2-1$ and $s^\prime \geq 1$. Then by Theorem~\ref{extrem},
$\mathrm{ex}(n-|C|+x,P_{2k_2+1})=(s+s^\prime){2k_2\choose 2}+{r_0\choose 2}$, we have
\begin{align*}
               &\mathrm{ex}(n-|C|+x,P_{2k_2+1})-(\mathrm{ex}(n-|C|,P_{2k_2+1})+x)\\\nonumber
               =&\left((s+s^\prime){2k_2\choose 2}+{r_0\choose 2}\right)-\left(s{2k_2\choose 2}+{r\choose 2}+x\right)\\\nonumber
                =&s^\prime{2k_2\choose 2}+{r_0\choose 2}-{r\choose 2}-(2s^\prime k_2+r_0-r)\\\nonumber
               =& \frac{1}{2}(2s^\prime k_2(2k_2-3)+r_0(r_0-3)-r(r-3))\\\nonumber
               \geq& \frac{1}{2}(2s^\prime k_2(2k_2-3)-r(r-3))\\\nonumber
              \geq& \frac{1}{2}(2s^\prime k_2(2k_2-3)-(2k_2-1)(2k_2-4)) \\\nonumber
               >&0,
               \end{align*}
we also have
$$
\mathrm{ex}(n-|C|,P_{2k_2+1})+x-(2k_1-2k_2-1)<\mathrm{ex}(n-|C|+x,P_{2k_2+1}).
$$

By (\ref{adddddd}), we have
\begin{align*}
&h(|C|,2k_1+2k_2+1,1)+\mathrm{ex}(n-|C|,P_{2k_2+1})\\\nonumber
<& {2k_1+2k_2+1 \choose 2}  +   \mathrm{ex}(n-2k_1-2k_2-1,P_{2k_2+1})   \\\nonumber
=&c(n,2k_1+1,2k_2+1),
\end{align*}
where the strict inequality holds by the fact $k_2\geq 2$ and Theorem~\ref{extrem}.
\end{proof}
\begin{proposition} \label{compa}
For $k_1\geq k_2\geq 2$, if $2k_1+2k_2+2\leq |C|\leq n$ then
$$h(|C|,2k_1+2k_2,k_1+k_2-1)+\mathrm{ex}(n-|C|,P_{2k_2+1})\leq h(n,2k_1+2k_2,k_1+k_2-1).$$
\end{proposition}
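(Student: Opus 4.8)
The plan is to exploit the fact that, for fixed parameters $k$ and $a$, the quantity $h(n,k,a)=\binom{k-a}{2}+(n-k+a)a$ is an \emph{affine} function of $n$ whose slope is exactly $a$. Here the relevant parameters are $k=2k_1+2k_2$ and $a=k_1+k_2-1$, and since $|C|\geq 2k_1+2k_2+2>k\geq 2a$, the graph $H(\cdot,k,a)$ is well defined for both arguments $|C|$ and $n$. First I would record the identity
\[
h(n,2k_1+2k_2,k_1+k_2-1)-h(|C|,2k_1+2k_2,k_1+k_2-1)=(n-|C|)(k_1+k_2-1),
\]
which is immediate from the defining formula: the binomial term $\binom{k-a}{2}$ cancels, and only the linear term $(n-k+a)a$ depends on $n$, so the difference is $(n-|C|)a=(n-|C|)(k_1+k_2-1)$.

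With this identity in hand, the claimed inequality is equivalent to
\[
\mathrm{ex}(n-|C|,P_{2k_2+1})\leq (n-|C|)(k_1+k_2-1).
\]
Writing $m=n-|C|\geq 0$, I would bound the left-hand side using the Erd\H{o}s--Gallai upper bound (Theorem~\ref{THM: path}) applied to the path $P_{2k_2+1}$, which gives $\mathrm{ex}(m,P_{2k_2+1})\leq \frac{(2k_2-1)m}{2}$. Note that here the crude upper bound suffices precisely because the target inequality is non-strict, so there is no need to invoke the exact formula of Theorem~\ref{extrem} or to split into residue cases as in Proposition~\ref{eq-1}.

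Finally I would compare slopes: since $k_1\geq k_2$, we have $k_1+k_2-1\geq 2k_2-1\geq \frac{2k_2-1}{2}$, whence $\frac{(2k_2-1)m}{2}\leq m(k_1+k_2-1)$, which is exactly what is needed. I do not expect a genuine obstacle in this argument: the whole statement collapses to the single observation that the per-vertex growth rate $a=k_1+k_2-1$ of $h$ in its first argument dominates the average-degree bound $\frac{2k_2-1}{2}$ furnished by Erd\H{o}s--Gallai for the forbidden path $P_{2k_2+1}$. The only points deserving a word of care are verifying that $H(\cdot,k,a)$ is indeed defined for both $|C|$ and $n$ (i.e.\ $n\geq |C|\geq k\geq 2a$, which holds under the hypothesis $|C|\geq 2k_1+2k_2+2$) and confirming the slope comparison uses nothing beyond $k_1\geq k_2\geq 2$.
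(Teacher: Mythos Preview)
Your proof is correct and follows essentially the same approach as the paper: both arguments expand $h$ using its defining formula, bound $\mathrm{ex}(n-|C|,P_{2k_2+1})$ by the Erd\H{o}s--Gallai estimate $\tfrac{(2k_2-1)(n-|C|)}{2}$, and then compare the slope $k_1+k_2-1$ against $\tfrac{2k_2-1}{2}$. Your phrasing via the affine dependence of $h$ on its first argument is a bit cleaner, but the substance is identical.
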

\begin{proof}
By Theorem~\ref{THM: path}, we can suppose $n-|C|=s(2k_2)+r$ and $0\leq r\leq 2k_2-1$.
Then,
$$
\mathrm{ex}(n-|C|,P_{2k_2+1})=s{2k_2\choose 2}+{r\choose 2}.
$$  If $2k_1+2k_2+2\leq |C|\leq n$, then
\begin{align*}
&h(|C|,2k_1+2k_2,k_1+k_2-1)+\mathrm{ex}(n-|C|,P_{2k_2+1})\\\nonumber
\leq& {k_1+k_2+1 \choose 2}  +  (|C|-(k_1+k_2+1))(k_1+k_2-1) +\frac{1}{2}(2k_2-1)(n-|C|).
\end{align*}

Since $k_1\geq k_2\geq 2$, we have $k_1+k_2-1>(2k_2-1)/2$, and so
\begin{align*}
&h(|C|,2k_1+2k_2,k_1+k_2-1)+\mathrm{ex}(n-|C|,P_{2k_2+1})\\\nonumber
\leq & {k_1+k_2+1 \choose 2}  +  (n-(k_1+k_2+1))(k_1+k_2-1)\\\nonumber
=& h(n,2k_1+2k_2,k_1+k_2-1).
\end{align*}

We finish the proof of Proposition~\ref{compa}.
\end{proof}

\begin{proposition}\label{claim9}
For $k\geq 5$ and $n>2k+1$, we have
$$(k-2)(n-2)/2+2n-3<h(n,2k,k-1).$$
\end{proposition}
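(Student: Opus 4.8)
The plan is to put both sides of the inequality into closed linear form in $n$ and then read off a clean factorization. First I would evaluate the right-hand side directly from (\ref{ema}), substituting $k\mapsto 2k$ and $a\mapsto k-1$, which gives $h(n,2k,k-1)=\binom{k+1}{2}+(n-k-1)(k-1)$. Expanding $\binom{k+1}{2}=\tfrac{k^2+k}{2}$ and collecting the coefficient of $n$, this simplifies to
$$h(n,2k,k-1)=(k-1)n-\frac{(k-2)(k+1)}{2}.$$

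Next I would rewrite the left-hand side. Expanding $(k-2)(n-2)/2=\tfrac{(k-2)n}{2}-(k-2)$ and adding $2n-3$ collapses everything to
$$\frac{(k-2)n}{2}+2n-k-1=\frac{(k+2)n}{2}-k-1.$$
The key step is then a single subtraction: using the identity $k^2-3k-4=(k-4)(k+1)$, I expect the difference to factor as
$$h(n,2k,k-1)-\left(\frac{(k-2)(n-2)}{2}+2n-3\right)=\frac{(k-4)n-(k-4)(k+1)}{2}=\frac{(k-4)(n-k-1)}{2}.$$
Thus the claim reduces to the positivity of $(k-4)(n-k-1)$.

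Finally I would conclude by checking both factors. Since $k\geq 5$ we have $k-4\geq 1>0$, and since $n>2k+1>k+1$ (the last inequality holding for every $k\geq 1$) we have $n-k-1>0$; hence the product is strictly positive, which is exactly the desired strict inequality. I do not expect a genuine obstacle here: the statement is a routine numerical comparison, and the only point requiring care is tracking strictness and confirming that the factor $k-4$ is positive. This is precisely why the hypothesis is $k\geq 5$ rather than $k\geq 4$: at $k=4$ the difference vanishes identically and the two sides coincide, so the strict margin disappears.
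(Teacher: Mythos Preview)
Your proof is correct and follows essentially the same approach as the paper: both compute the difference $h(n,2k,k-1)-\bigl((k-2)(n-2)/2+2n-3\bigr)$ as a linear expression in $n$ and verify positivity. Your factorization $\frac{(k-4)(n-k-1)}{2}$ is a bit cleaner than the paper's version, which instead plugs in the bound $n>2k+1$ to reduce to $\frac{k^2}{2}-2k>0$, but the two computations are interchangeable.
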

\begin{proof}
Since $k\geq 5$ and $n>2k+1$, we have
\begin{align*}
&h(n,2k,k-1)-\left(\frac{(k-2)(n-2)}{2}+2n-3\right)\\
=&{k+1\choose 2}+(k-1)(n-k-1)-\left(\frac{(k-2)(n-2)}{2}+2n-3\right)\\
=&-\frac{k^2}{2}+\frac{3}{2}k+2+(k/2-2)n\\
>&-\frac{k^2}{2}+\frac{3}{2}k+2+(k/2-2)(2k+1)\\
=&\frac{k^2}{2}-2k>0.
\end{align*}
The proof is complete.
\end{proof}

\section{Proof of Theorem \ref{turan two odd path} (assuming Lemma \ref{Lemma:Stablity-Kopylov2})}\label{Sec:4}
In this section, we will prove Theorem \ref{turan two odd path} assuming Lemma \ref{Lemma:Stablity-Kopylov2}.

\begin{theorem}\label{3.2}
Let $G$ be a connected graph on $n$ vertices, $k_1\geq k_2\geq 2$ and $k=k_1+k_2+1$. If $G$ does not contain $P_{2k_1+1}\cup P_{2k_2+1}$ as a subgraph, then
$$
e(G)\leq \max\{h(n,2k_1+2k_2+1,1),h(n,2k_1+2k_2,k_1+k_2-1)\}.
$$
\end{theorem}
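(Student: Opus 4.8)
The plan is to reduce this connected statement to the $2$-connected circumference problem by attaching a universal vertex, and then apply the stability form of Kopylov's theorem (Lemma~\ref{Lemma:Stablity-Kopylov2}). Throughout I would assume $n\geq 2k$: this is the only operative range, since a copy of $P_{2k_1+1}\cup P_{2k_2+1}$ needs $2k_1+2k_2+2=2k$ vertices, so for $n\leq 2k-1$ the hypothesis is vacuous and the asserted maximum is not the governing bound (this boundary is also exactly what is needed below). Let $G'$ be obtained from $G$ by adding a new vertex $v$ joined to every vertex of $G$. Then $e(G')=e(G)+n$, the graph $G'$ has $n+1\geq 2k+1$ vertices, and $G'$ is $2$-connected: $v$ is not a cut vertex because $G$ is connected, and for any $u\in V(G)$ the graph $G'-u=(G-u)+v$ is connected because $v$ dominates $G-u$.

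Next I would show that $G'$ contains no member of $\mathcal{F}=\{H_k(1),H_k(M_2),H_k(P_3)\}\cup\mathcal{C}_{2k+1}$. The crucial input, supplied by Lemmas~\ref{property for H}(\romannumeral1) and~\ref{H(2k+1)}(\romannumeral1), is that deleting \emph{any} single vertex from $H_k(1)$, $H_k(M_2)$, or $H_k(P_3)$ leaves a graph containing $P_{2k_1+1}\cup P_{2k_2+1}$; moreover, deleting any vertex of a cycle $C_\ell$ with $\ell\geq 2k+1$ leaves $P_{\ell-1}\supseteq P_{2k}$, and $P_{2k}$ contains $P_{2k_1+1}\cup P_{2k_2+1}$ by splitting its vertex set into an initial $(2k_1+1)$-block and a final $(2k_2+1)$-block. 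Hence every $F\in\mathcal{F}$ satisfies $F-w\supseteq P_{2k_1+1}\cup P_{2k_2+1}$ for every $w\in V(F)$. Now suppose $F\subseteq G'$ for some $F\in\mathcal{F}$. If $v\in V(F)$, then $F-v\subseteq G$ already contains $P_{2k_1+1}\cup P_{2k_2+1}$; if $v\notin V(F)$, then $F\subseteq G$ and, fixing any $w\in V(F)$, the subgraph $F-w\subseteq F\subseteq G$ contains the two paths. In both cases $G$ contains $P_{2k_1+1}\cup P_{2k_2+1}$, contradicting the hypothesis. Therefore $G'$ is $\mathcal{F}$-free.

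Since $G'$ is $2$-connected, $\mathcal{F}$-free, $k\geq 5$, and has $n+1\geq 2k+1$ vertices, the contrapositive of Lemma~\ref{Lemma:Stablity-Kopylov2} gives
\[
e(G')\leq \max\{h(n+1,2k,k-1),\,h(n+1,2k+1,2)\}.
\]
It then remains to subtract $n$ and translate parameters using $2k=2k_1+2k_2+2$, $k-1=k_1+k_2$, and $k-2=k_1+k_2-1$. A direct expansion of $h(n,k,a)=\binom{k-a}{2}+(n-k+a)a$ yields the two identities
\[
h(n+1,2k,k-1)-n=h(n,2k_1+2k_2,k_1+k_2-1)\quad\text{and}\quad h(n+1,2k+1,2)-n=h(n,2k_1+2k_2+1,1).
\]
Combining these with the displayed bound gives $e(G)=e(G')-n\leq \max\{h(n,2k_1+2k_2+1,1),\,h(n,2k_1+2k_2,k_1+k_2-1)\}$, which is exactly the asserted inequality.

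As for difficulty: the two edge-count identities are routine polynomial computations in $n$ and $k$ that I would verify by expanding both sides, and the universal-vertex reduction is standard. The genuine substance of the argument lives in the results being cited rather than in this theorem: the fact that deleting an arbitrary vertex of $H_k(1)$, $H_k(M_2)$, or $H_k(P_3)$ still leaves $P_{2k_1+1}\cup P_{2k_2+1}$ (Lemmas~\ref{property for H} and~\ref{H(2k+1)}, flagged by the authors as the hardest part of the program) and the stability statement Lemma~\ref{Lemma:Stablity-Kopylov2} itself. Within the present reduction the only point demanding care is the degenerate regime: one must keep $n+1\geq 2k+1$ so that the long cycles in $\mathcal{C}_{2k+1}$ and the $(2k+1)$-vertex members of $\mathcal{F}$ are actually realizable in $G'$ and the bound of Lemma~\ref{Lemma:Stablity-Kopylov2} is meaningful, which is precisely the standing assumption $n\geq 2k$.
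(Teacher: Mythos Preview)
Your proposal is correct and follows essentially the same argument as the paper: attach a universal vertex to obtain a $2$-connected graph, apply Lemma~\ref{Lemma:Stablity-Kopylov2} together with Lemmas~\ref{property for H}(\romannumeral1) and~\ref{H(2k+1)}(\romannumeral1), and then subtract $n$ using the identities $h(n+1,2k+1,2)-n=h(n,2k-1,1)$ and $h(n+1,2k,k-1)-n=h(n,2k-2,k-2)$. You are slightly more explicit than the paper about the case split $v\in V(F)$ versus $v\notin V(F)$ and about the standing assumption $n\geq 2k$, but the substance is identical.
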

\begin{proof}
Let $G$ be a connected graph without containing $P_{2k_1+1}\cup P_{2k_2+1}$.
Let $G^*$ be the graph obtained from $G$ by adding a new vertex $x$ and
joining all edges between $x$ and $V(G)$. Hence $G^*$ is 2-connected.
If $e(G^*)>\max\{h(n+1,2k,k-1),h(n+1,2k+1,2)\}$, then by
Lemma~\ref{Lemma:Stablity-Kopylov2}, $G^*$ contains either $H_k(1)$,
$H_k(M_2)$, or $H_k(P_3)$ as a subgraph, or a cycle with length at least $2k+1$.
In the former case, by Lemma~\ref{property for H}(\romannumeral1) and
Lemma~\ref{H(2k+1)}(\romannumeral1), $G$ contains $P_{2k_1+1}\cup P_{2k_2+1}$
as a subgraph. For the later case, $G$ contains $P_{2k_1+1}\cup P_{2k_2+1}\subseteq P_{2k}$
as a subgraph.
Thus $e(G^*)\leq\max\{h(n+1,2k,k-1),h(n+1,2k+1,2)\}$, and so
\begin{align*}
     e(G)&=e(G^*)-n\\
     &\leq\max\{h(n+1,2k,k-1),h(n+1,2k+1,2)\}-n\\
     &=\max\{h(n+1,2k+1,2)-n,h(n+1,2k,k-1)-n\}\\
     &=\max\{h(n,2k-1,1),h(n,2k-2,k-2)\}.
 \end{align*}
The proof is complete.
\end{proof}
Let
\begin{equation}\label{fx}
f(n,k_1,k_2)=\max\{h(n,2k_1+2k_2+1,1),h(n,2k_1+2k_2,k_1+k_2-1)\},
\end{equation}
and
\begin{equation}\label{gx}
g(n,k_1,k_2)=\max\left\{c(n,2k_1+1,2k_2+1),\mathrm{ex}(n,P_{2k_1+1}), h(n,2k_1+2k_2,k_1+k_2-1)\right\}.
\end{equation}
Now we will show that Lemma \ref{Lemma:Stablity-Kopylov2} and Theorem \ref{3.2} imply Theorem \ref{turan two odd path}.

\medskip

\noindent \textbf{Proof of Theorem \ref{turan two odd path}.}
Suppose that $G$ is an extremal graph for $P_{2k_1+1}\cup P_{2k_2+1}$.
By Theorem~\ref{Y}, we may suppose that $k_1\geq k_2\geq 2$.
Note that the graphs in $\mathcal{C}(n,2k_1+1,2k_2+1)$, $\mathrm{Ex}(n,P_{2k_1+1})$, and the graph $H(n,2k_1+2k_2,k_1+k_2-1)$ do not contain $P_{2k_1+1}\cup P_{2k_2+1}$ as a subgraph.
Thus $e(G)\geq g(n,k_1,k_2)$.
We may suppose for a contradiction that
\begin{equation}\label{assuming condition}
e(G)> g(n,k_1,k_2).
\end{equation}

We divide the proof into the following two cases.

\medskip

\noindent{\bf Case 1.} $G$ is connected.

\medskip

By (\ref{assuming condition}), we have
\begin{align*}
e(G)&>\max\left\{c(n,2k_1+1,2k_2+1),\mathrm{ex}(n,P_{2k_1+1}),h(n,2k_1+2k_2,k_1+k_2-1)\right\} \\
&\geq \max\{c(n,2k_1+1,2k_2+1),h(n,2k_1+2k_2,k_1+k_2-1)\}.
\end{align*}

Since $n\geq 2k_1+2k_2+2\geq 7$, we apply Proposition~\ref{eq-1} with $n=|C|$, and have
\begin{equation*}
e(G)\geq \max\{h(n,2k_1+2k_2+1,1),h(n,2k_1+2k_2,k_1+k_2-1)\}.
\end{equation*}

Thus, $G$ contains $P_{2k_1+1}\cup P_{2k_2+1}$ as a subgraph by Theorem \ref{3.2}, a contradiction.

\medskip

\noindent{\bf Case 2.} $G$ is not connected.

\medskip

If $G$ is not connected, since $e(G)>\mathrm{ex}(n,P_{2k_1+1})$, $G$ contains $P_{2k_1+1}$ as a subgraph.
Let $C$ be the component of $G$ contains $P_{2k_1+1}$ as a subgraph. Then, $G-C$ is $P_{2k_2+1}$-free.
If $|C|\leq 2k_1+2k_2+1$, then
\begin{equation*}
e(G)\leq {|C| \choose 2}+\mathrm{ex}(n-|C|,P_{2k_2+1}).
\end{equation*}

By Theorem~\ref{extrem}, we have
\begin{align*}
e(G)&\leq {2k_1+2k_2+1 \choose 2}+\mathrm{ex}(n-2k_1-2k_2-1,P_{2k_2+1})\\
&=c(n, 2k_1+1, 2k_2+1)\\
&\leq  g(n,k_1,k_2),
\end{align*}
contradicting (\ref{fx}).
If $|C|\geq 2k_1+2k_2+2$, then by Theorem \ref{3.2}, we have
\begin{align*}
          e(G)&\leq f(|C|,k_1,k_2)+\mathrm{ex}(n-|C|,P_{2k_2+1})\\
            &=\max\{h(|C|,2k_1+2k_2+1,1),h(|C|,2k_1+2k_2,k_1+k_2-1)\}+\mathrm{ex}(n-|C|,P_{2k_2+1}).
\end{align*}

By Propositions~\ref{eq-1}, \ref{compa}, and Theorem~\ref{THM: path}, we have
\begin{align*}
e(G)&\leq   \max\left\{c(n,2k_1+1,2k_2+1),h(n,2k_1+2k_2,k_1+k_2-1)\right\}\\
&\leq g(n,k_1,k_2),
\end{align*}
contradicting (\ref{assuming condition}).
The proof of Theorem \ref{turan two odd path} is complete.\hfill $\square$

\section{Proof of Lemma \ref{Lemma:Stablity-Kopylov2}}\label{Sec:5}
We need the following result of Fan \cite{Fan}.
\begin{theorem}[Fan \cite{Fan}]\label{Fan}
Let $G$ be an $n$-vertex 2-connected graph and $ab$ be an edge in $G$.
If the longest path starting from $a$ and ending at $b$ in $G$ has
at most $r$ vertices, then $e(G)\leq (r-3)(n-2)/2+2n-3$.
Moreover, equality holds if and only if $G-\{a,b\}$ consists of
vertex-disjoint union of copies of $K_{r-2}$.
\end{theorem}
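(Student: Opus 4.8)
The plan is to prove the bound by reducing to the case where $H:=G-\{a,b\}$ is connected and then treating that case by induction on $n$. Throughout, note the convenient identity $\binom{r-2}{2}+2r-3=\binom{r}{2}$, and that the right-hand side $(r-3)(n-2)/2+2n-3$ increases by exactly $(r+1)/2$ each time $n$ increases by one; this $(r+1)/2$ is precisely the per-vertex edge-contribution of a vertex lying in a clique $K_{r-2}$ and joined to $a$ and $b$, which is why the extremal structure is forced to be such cliques.

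\textbf{Step 1 (reduction to $H$ connected).} Let $C_1,\dots,C_t$ be the components of $H=G-\{a,b\}$. Since $G$ is $2$-connected, neither $a$ nor $b$ is a cut vertex, which forces each $C_i$ to send edges to \emph{both} $a$ and $b$. Put $G_i:=G[C_i\cup\{a,b\}]$. A short argument shows each $G_i$ is again $2$-connected: any would-be cut vertex of $G_i$ lying in $C_i$ would also separate $G$, because in $G$ the vertices of $C_i$ have no neighbours outside $C_i\cup\{a,b\}$. The decisive point is that every $(a,b)$-path of $G$ lies inside a single $G_i$: once such a path leaves $\{a,b\}$ into some $C_i$ it can only return through $a$ or $b$, and since $a$ is already used as the start it must terminate at $b$ without reaching any other component. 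Hence the longest $(a,b)$-path of each $G_i$ still has at most $r$ vertices, and the $G_i$ pairwise share only the edge $ab$, so
\begin{equation*}
e(G)=\sum_{i=1}^{t}e(G_i)-(t-1).
\end{equation*}
Granting the bound $e(G_i)\le (r-3)|C_i|/2+2|C_i|+1$ for each $G_i$ (the connected-$H$ case, as $C_i$ is connected), summing and using $\sum_i|C_i|=n-2$ yields exactly $e(G)\le (r-3)(n-2)/2+2n-3$. Equality forces equality in every $G_i$, i.e.\ each $C_i=K_{r-2}$, which is the stated extremal family. Thus it suffices to prove the bound when $H$ is connected.

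\textbf{Step 2 (connected $H$, induction on $n$).} The base case $n=r$ is the trivial estimate $e(G)\le\binom{n}{2}=\binom{r}{2}$, with equality iff $G=K_r$, i.e.\ $H=K_{r-2}$. For $n>r$ I would run a Kopylov-style dichotomy driven by the threshold $(r+1)/2$. If there is a vertex $v\notin\{a,b\}$ with $d(v)\le (r+1)/2$ whose deletion leaves $G$ still $2$-connected, then $G-v$ has no $(a,b)$-path on more than $r$ vertices, and the induction hypothesis gives $e(G)=e(G-v)+d(v)$, which is at most the bound for $n-1$ plus $(r+1)/2$, equal to the bound for $n$. Otherwise $G$ is irreducible. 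In that regime I would derive a contradiction with $n>r$ by a P\'osa-type rotation: take a longest $(a,b)$-path $P=a\,u_1\cdots u_{m-1}\,b$ with $m+1\le r<n$; since $H$ is connected and $n>r$ there is an off-path vertex $w$, and $2$-connectivity supplies two internally disjoint $w$--$P$ paths attaching at vertices $u_i,u_j$ with $i<j$; the maximality of $P$ together with the high degrees forced by irreducibility then allows one to reroute $P$ through $w$ into a strictly longer $(a,b)$-path, contradicting the hypothesis. Hence only $n=r$ survives, completing the induction.

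\textbf{Main obstacle.} The real work is the connectivity bookkeeping in the inductive step: deleting a low-degree vertex may destroy $2$-connectivity, so one cannot simply peel off an arbitrary vertex of degree $\le (r+1)/2$. The honest fix is to locate such a vertex inside an end block of a suitable block/ear decomposition (or to run the disintegration down to a $2$-connected core and account for the deleted low-degree vertices separately), and then to push the rotation argument through in the complementary high-minimum-degree case. This last case is exactly where the union-of-cliques structure gets pinned down, since high degree plus $2$-connectivity plus $n>r$ must manufacture an $(a,b)$-path on more than $r$ vertices unless each off-path piece collapses to a clique attached to $\{a,b\}$. Tracking tightness back through both the reduction of Step 1 and the induction of Step 2 then yields the equality characterization that $G-\{a,b\}$ is a disjoint union of copies of $K_{r-2}$.
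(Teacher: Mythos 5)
The paper does not prove this statement at all: Theorem \ref{Fan} is imported verbatim from Fan's paper \cite{Fan} and used as a black box, so there is no in-paper argument to compare against, and your proposal has to stand on its own. Its Step 1 does stand: each $G_i=G[C_i\cup\{a,b\}]$ is indeed $2$-connected (a cut vertex of $G_i$ inside $C_i$ would cut $G$, since $C_i$ has no neighbours outside $C_i\cup\{a,b\}$, and each $C_i$ sends edges to both $a$ and $b$ because neither $G-a$ nor $G-b$ is disconnected), every $(a,b)$-path of $G$ lies in a single $G_i$, the count $e(G)=\sum_i e(G_i)-(t-1)$ is right, and the per-component bound $(r-3)|C_i|/2+2|C_i|+1$ sums exactly to $(r-3)(n-2)/2+2n-3$, with equality forcing equality in each $G_i$. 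One small omission: after the reduction you may have $|C_i|+2<r$, so the base of the induction must be $n\le r$, not $n=r$; this is harmless, since $\binom{n}{2}$ falls below the right-hand side with slack $(r-n)(n-2)/2\ge 0$, and it also shows equality is impossible for $n<r$.

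Step 2, however, is where the entire content of Fan's theorem sits, and it is not proved --- you concede as much under ``Main obstacle.'' Concretely: (i) the dichotomy you run the induction on is not available as stated, since a vertex $v\notin\{a,b\}$ with $d(v)\le (r+1)/2$ whose deletion preserves $2$-connectivity need not exist, and the proposed repairs (finding $v$ in an end block, or disintegrating to a core) are named but never executed; note that in the disintegration variant the core need not be $2$-connected and need not contain $a$ and $b$, so the hypothesis ``longest $(a,b)$-path has at most $r$ vertices'' does not transfer to it. (ii) In the ``irreducible'' case the rotation step fails as described: the two internally disjoint $w$--$P$ paths attach at $u_i,u_j$ that are in general not adjacent on $P$, so rerouting through $w$ discards $u_{i+1},\dots,u_{j-1}$ and produces no longer $(a,b)$-path; making rotation bite requires degree lower bounds at designated \emph{path} vertices together with an argument that irreducibility supplies them, none of which appears in the sketch --- this is precisely the part where Fan's published proof does its lengthy work, and asserting the contradiction does not discharge it. (iii) The equality characterization is never actually tracked: in the deletion step, $d(v)=(r+1)/2$ (possible when $r$ is odd) could a priori preserve equality with $G-v$ extremal while $G$ itself is not of the stated form, and ruling this out needs a structural argument you do not give. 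In sum, Step 1 is a correct and clean reduction, but the heart of the theorem --- the bound for $n>r$ with $H$ connected, and the equality case --- remains unproved.
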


Before proving Lemma \ref{Lemma:Stablity-Kopylov2}, we state
the following definition which is important in many proofs related with
problems on circumference.
\begin{mydef}
For an integer $\alpha$ and a graph $G$, the $\alpha$-disintegration
of $G$, denoted by $H(G,\alpha)$, is the graph obtained from $G$ by recursively
deleting vertices of degree at most $\alpha$ until that the resulting graph has no such vertex.
\end{mydef}

\noindent{\bf Proof of Lemma \ref{Lemma:Stablity-Kopylov2}.}
Let $k\geq 5$ and $G$ be the 2-connected $\mathcal{F}$-free graph with maximal edges satisfying (\ref{bound for 2-connected}).
Thus, if $x$ is not adjacent to $y$, then $G+xy$ contains a copy of  $F\in \mathcal{F}$ as a subgraph.
Apply to the graph $G$ the process of $\alpha$-disintegration with $\alpha=k-1$ and let $H=H(G,k-1)$ and $|V(H)|=\ell$.
\begin{claim}\label{complete}
$\ell\geq k+2$.
\end{claim}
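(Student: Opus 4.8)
The plan is to argue by contradiction: assume $\ell \le k+1$ and show this forces $e(G) \le h(n,2k,k-1)$, contradicting the hypothesis (\ref{bound for 2-connected}). The starting observation is that, by the very definition of $(k-1)$-disintegration, the process halts only when no vertex of degree at most $k-1$ survives; hence every vertex of $H$ has degree at least $k$ \emph{inside $H$}. Since a nonempty graph of minimum degree at least $k$ has at least $k+1$ vertices, we must have either $\ell = 0$ or $\ell \ge k+1$. Combined with $\ell \le k+1$, this leaves exactly the two values $\ell = k+1$ and $\ell = 0$ to rule out.

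To control $e(G)$ I would simply track the edges destroyed during disintegration: each deleted vertex removes at most $k-1$ edges (its degree at the moment of deletion), while the surviving core $H$ contributes at most $\binom{\ell}{2}$ edges. In the case $\ell = k+1$, the minimum-degree condition forces $H = K_{k+1}$, and therefore
$$e(G) \le \binom{k+1}{2} + (k-1)(n-k-1) = h(n,2k,k-1),$$
which already contradicts (\ref{bound for 2-connected}).

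The case $\ell = 0$ is where I expect the real difficulty, since the crude estimate of $k-1$ edges per deleted vertex only gives $e(G) \le (k-1)n$, and this actually \emph{exceeds} $h(n,2k,k-1)$, so it is too weak. The remedy is a sharper count at the tail of the process: a vertex deleted while $s$ vertices still remain has degree at most $\min\{k-1,\,s-1\}$, so the last $k-1$ deletions (those occurring at $s=k-1,k-2,\dots,1$) together destroy only $\binom{k-1}{2}$ edges instead of $(k-1)^2$. Summing the per-step bounds yields
$$e(G) \le (n-k+1)(k-1) + \binom{k-1}{2} = h(n,2k,k-1) - 1 < h(n,2k,k-1),$$
a contradiction. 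Ruling out both admissible small values of $\ell$ then gives $\ell \ge k+2$. It is worth noting that, in contrast with the later steps in the proof of Lemma~\ref{Lemma:Stablity-Kopylov2}, this claim requires neither Fan's theorem (Theorem~\ref{Fan}) nor the $2$-connectivity of $G$: it is a pure degeneracy/edge-counting argument, whose only subtle point is recovering the constant-order saving in the $\ell=0$ case.
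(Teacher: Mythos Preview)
Your proof is correct and follows the same edge-counting approach as the paper: bound $e(G)$ by $\binom{\ell}{2}+(n-\ell)(k-1)$ from the $(k-1)$-disintegration process and compare with $h(n,2k,k-1)$. Your version is in fact more careful than the paper's: the paper simply writes $\binom{\ell}{2}+(n-\ell)(k-1) < h(n,2k,k-1)$ uniformly for $\ell\le k+1$, but as you observe this crude bound is too weak when $\ell=0$ (indeed $n(k-1) > h(n,2k,k-1)$ for $k\ge 3$), and at $\ell=k+1$ it gives equality rather than strict inequality. Your reduction to the only two admissible values $\ell\in\{0,k+1\}$, together with the refined tail count $\sum_{s=1}^{k-1}(s-1)=\binom{k-1}{2}$ in the $\ell=0$ case, patches what is arguably a small gap in the paper's argument.
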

\begin{proof}
If $\ell\leq k+1$ then
\begin{align*}
e(G)&\leq {\ell \choose 2}+ (n-\ell)(k-1)\\
&< {2k-(k-1)\choose 2}+(n-2k+(k-1))(k-1)\\
&= h(n,2k,k-1)\\
&\leq \max\{h(n,2k,k-1),h(n,2k+1,2)\},
\end{align*}
a contradiction to (\ref{bound for 2-connected}).
Therefore $|V(H)|\geq k+2$.
\end{proof}
     \begin{claim}\label{Hcomplete}
          $H$ is a complete graph.
     \end{claim}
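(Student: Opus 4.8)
The plan is to argue by contradiction, combining the edge-maximality of $G$ with Pós\'a's rotation technique (Lemma \ref{posa lemma} and its refinement Lemma \ref{extend posa lemma}) and the Hamiltonicity properties of the extremal graphs recorded in Lemmas \ref{property for H} and \ref{H(2k+1)}. Suppose $H$ is not complete and fix non-adjacent vertices $u,v\in V(H)$. By the definition of the $(k-1)$-disintegration every vertex surviving in $H$ has degree at least $k$ in $H$, so $d_H(u),d_H(v)\ge k$; in particular $u$ and $v$ have at least $k$ neighbours lying in the core $V(H)$. Since $G$ is a maximal $\mathcal F$-free graph satisfying (\ref{bound for 2-connected}) while $G$ itself is $\mathcal F$-free, the graph $G+uv$ contains a copy of some $F\in\mathcal F$ that uses the edge $uv$. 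The goal is to produce a cycle of length at least $2k+1$ in $G$ (or a forbidden $H_k(\cdot)$ already in $G$), contradicting that $\mathcal F=\mathcal C_{2k+1}\cup\{H_k(1),H_k(M_2),H_k(P_3)\}$ is forbidden.

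First I would extract a long path. If $F\in\mathcal C_{2k+1}$, then $F-uv$ is a $(u,v)$-path on at least $2k+1$ vertices. If $F$ is one of $H_k(1),H_k(M_2),H_k(P_3)$, then by Lemma \ref{property for H}(ii) and Lemma \ref{H(2k+1)}(ii) the edge $uv$ lies on a cycle of length $2k$ of this copy (the only exception, an $a_2$--$B$ edge of $H_k(P_3)$, being handled via the $a_1a_3$-path of Lemma \ref{H(2k+1)}(ii)), so deleting $uv$ gives a $(u,v)$-path on $2k$ vertices in $G$. Thus $G$ contains a path $Q=q_1\cdots q_p$ with $q_1=u$, $q_p=v$ and $p\ge 2k$. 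Applying Lemma \ref{posa lemma} to $Q$ yields $c(G)\ge\min\{p,\,d_Q(u)+d_Q(v)\}$, and since $u,v$ carry at least $k$ core-neighbours I would use rotations to drive those neighbours onto $Q$; if this forces $d_Q(u)+d_Q(v)\ge 2k+1$, or $p\ge 2k+1$ with rotated endpoints still of high degree, then $c(G)\ge 2k+1$ already, a contradiction.

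The remaining tight regime — where $c(G)=2k$, the relevant endpoints have path-degree exactly $k$, and $p\ge 2k+1$ — is the crux, and is exactly the situation addressed by Lemma \ref{extend posa lemma}: it plants $H_k(s)\subseteq G[V(Q)]$ when $p=2k+1$ and $F_k(t)\subseteq G[V(Q)]$ when $p\ge 2k+2$. The case $s=1$ is a forbidden $H_k(1)$. In the other cases the engine is the discrepancy between the degrees $d_H(u),d_H(v)\ge k$ and the degree of $u,v$ inside the bare configuration: an $A$- or $B$-vertex of $H_k(s)$ has degree exactly $k$ there, and $uv$ was only an \emph{added} edge, so in $G$ the endpoint $u$ (say) has lost one configuration-neighbour yet still has $d_H(u)\ge k$ core-neighbours, forcing at least one incidence of $u$ or $v$ not accounted for by the configuration.

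Depending on where this surplus edge lands I would close the argument by a trichotomy. If it lies inside $A\cup B\cup D$, Lemma \ref{property for H}(iii) (resp. Lemma \ref{H(2k+1)}(iii)) turns the configuration into a Hamilton cycle on $2k+1$ vertices, i.e. a $C_{2k+1}$ in $G$; a bundle of surplus $C$--$D$ edges instead completes a forbidden $H_k(M_2)$ or $H_k(P_3)$ (recall $H_k(2)$ together with all $C$--$D$ edges is $H_k(M_2)$); and a surplus edge leaving $V(Q)$ reroutes $Q$ into a longer path between two core vertices, which after renaming contradicts the maximality of $Q$. The main obstacle is precisely making this trichotomy exhaustive — tracking the rotated endpoints and guaranteeing that the extra degree of the core vertices $u,v$ always lands in one of these three productive positions — after which every branch yields either a cycle of length at least $2k+1$ or a member of $\{H_k(1),H_k(M_2),H_k(P_3)\}$ in $G$, contradicting $\mathcal F$-freeness and hence proving $H=K_\ell$.
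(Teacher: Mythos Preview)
Your overall strategy --- contradict the maximality of $G$ by adding $uv$, extract a long $(u,v)$-path, and feed it to Lemmas~\ref{posa lemma} and~\ref{extend posa lemma} --- matches the paper's opening. The genuine gap is in your finishing trichotomy. When Lemma~\ref{extend posa lemma} outputs $H_k(s)$ or $F_k(t)$, that structure already lies inside $G[V(P)]$, not merely in $G+uv$: the added edge $uv=x_1x_m$ is an $A$--$B$ edge, which is \emph{not} present in $H_k(s)$ or $F_k(t)$, so nothing has been ``lost''. Indeed $x_1\in A$ has degree exactly $k$ inside $H_k(s)$ (namely $(A\setminus\{x_1\})\cup C$), which is perfectly consistent with $d_H(x_1)=k$ and $N_H(x_1)=N_P(x_1)$ without any surplus incidence. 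Your discrepancy engine therefore never fires, and the trichotomy has no edge to feed on.

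The paper finishes these subcases by quite different means. For $H_k(s)$ with $s\ge 2$ it proves (Claim~\ref{good property}) that every vertex outside $A\cup B\cup C$ can meet only $C$, and then counts edges to get $e(G)\le h(n,2k,k-1)+1-\tfrac12(s-1)(s-2)$; together with~(\ref{bound for 2-connected}) this forces $s=2$ and $e(G)=h(n,2k,k-1)+1$ exactly, from which a copy of $H_k(M_2)$ is read off. For $F_k(t)$ your trichotomy is even further off, since Lemma~\ref{property for H}(iii) concerns $H_k(s)$, not $F_k(t)$; the paper shows instead (Claim~\ref{AB} and the claim following it) that $A$ and $B$ are components of $G-\{x_k,x_{k+t+1}\}$ and that no $(x_k,x_{k+t+1})$-path in $G$ has more than $k+1$ vertices, and then applies Fan's Theorem~\ref{Fan} with Proposition~\ref{claim9} to obtain $e(G)<h(n,2k,k-1)$, contradicting~(\ref{bound for 2-connected}). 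Likewise, when the added copy is $H_k(P_3)$, the paper uses a separate structural-plus-counting argument (Claim~\ref{clai}) to force $G^*=H(n,2k+1,k-1)$ and derive a contradiction. In short, the hard work in each branch is an edge-count against the hypothesis~(\ref{bound for 2-connected}), requiring tools --- Fan's theorem in particular --- that your proposal does not invoke.
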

\begin{proof}
Suppose not. There exist vertices $x$ and $y$ in $H$ such that $x$ is not joint to $y$.
By the maximality of $G$, if we add the edge $xy$ to $G$, the obtained graph $G^*=G+xy$ contains a copy of  $F\in \mathcal{F}$.
We divide our proof of Claim~\ref{Hcomplete} into the following three cases.

\medskip

\noindent{\bf Case 1.} $G^*$ contains a cycle with length at least $2k+1$.
\medskip

Since $G^*$ contains a cycle with length at least $2k+1$, we can choose a path $P=xPy$
of maximum length such that it starts and ends in $H$.
It follows from the maximality of $P$ that  $N_H(x)\subseteq V(P)$ and $N_H(y)\subseteq V(P)$.
If $d_P(x)+d_P(y)\geq d_H(x)+d_H(y)\geq  2k+1$, since $G$ is 2-connected, it follows
from Lemma \ref{posa lemma} that $c(G)\geq 2k+1$, a contradiction.
Hence $N_H(x)=N_P(x)$, $N_H(y)=N_P(y)$, and $d_H(x)=d_H(y)=k$, implying that  $G[V(P)]$
satisfies the condition of Lemma~\ref{extend posa lemma} (applying Lemma \ref{posa lemma} for
vertices $u\in N_H(x)\cup N_H(y)$ and $v\in N_H(x)\cup N_H(y)$ such that there is a path
starting from $u$ and ending at $v$ in $G[V(P)]$ on $|V(P)|$ vertices).
Applying Lemma~\ref{extend posa lemma}, we have $H_k(s)\subseteq G[V(P)]$
for some $s$ when the length of $P$ is $2k$ or $F_k(t)\subseteq G[V(P)]$ for some $t$ when the length of $P$ is at least $2k+1$.

\medskip

\noindent{\bf Subcase 1.1.} The length of $P$ is $2k$, i.e., $H_k(s) \subseteq G[V(P)]$ for some $s$.

\medskip

If $s=1$, then $G$ contains $H_k(1)$ as a subgraph, a contradiction.
Thus, we may assume that $2\leq s\leq k-1$.
\textcolor{blue}{Let $A$, $B$, $C$ and $D$ be the vertex sets of $V(H_k(s))$ after the definition of $H_k(s)$.}
By the good properties of $H_k(s)$, we will show the following claim.
\begin{claim4}\label{good property}
Each vertex of $G-A\cup B \cup C$ can only be adjacent to the vertices in $C$.
\end{claim4}
\begin{proof}
Since $c(G)\leq 2k$, by Lemma~\ref{property for H}(\romannumeral3),
each vertex in $D$ can only be adjacent to the vertices in $C$.
Let $u$ and $v$ be two vertices of $A\cup B \cup C$ which are not both in $C$.

If $s\leq k-2$, by Lemma~\ref{property for H}(\romannumeral2), (\romannumeral3),
and (\romannumeral4), there exists a path on at least $2k$ vertices starting
from $u$ and ending at $v$ in $H_k(s)$.
Thus, each vertex of $G-V(H_k(s))$ can only be adjacent to the vertices in $C$,
otherwise since $G$ is 2-connected, we can easily find a cycle on at least $2k+1$
vertices, a contradiction.

Let $s=k-1$. Then $A\cup B\cup C\subseteq V(H)$, $C=\{x_{k}, x_{k+2}\}$ and $D=\{x_{k+1}\}$.
If $u$ and $v$ are two vertices of $A\cup B$, then w.l.o.g., we can suppose
$u=x_1$ and $v=x_{2k+1}$ ($u\in A$ and $v\in B$), or $u=x_1$ and $v=x_{k-1}$
($\{u,v\}\subseteq A$ or $\{u,v\}\subseteq B$). Then $x_1\ldots x_{2k+1}$
and $x_1\ldots x_{k-2}x_{k}x_{k+3}\ldots x_{2k+1}x_{k+2}x_{k-1}$ are paths
on at least $2k$ vertices from $u$ to $v$ respectively.
If $u\in C$ (w.l.o.g. $u=x_{k}$) and $v\notin C$ (w.l.o.g $v=x_1$), then
$x_1\ldots x_{k-1}x_{k+2}x_{k+3}\ldots x_{2k+1}x_k$ is a path on $2k$
vertices from $u$ to $v$.
We also come to the conclusion that there exists a path on at least $2k$
vertices starting from $u$ and ending at $v$ in $H_k(s)$.
Similarly, each vertex of $G-V(H_k(s))$ also can only be adjacent to the
vertices in $C$; otherwise since $G$ is 2-connected, we have $c(G)\geq 2k+1$,
a contradiction.
The proof of Claim~\ref{good property} is complete.
\end{proof}

From  Claim~\ref{good property}, we have
\begin{align*}
e(G)&=e(G[V(P)\setminus D])+e(G[V(P)\setminus D,V(G)\setminus (A\cup B\cup C)]) \\
&\leq \frac{1}{2}( (k-s+1)(k+s)+2sk)  +(n-k-s-1)(k-1)       \\
&= {k-1 \choose 2}+(n-k+1)(k-1)+2  -\frac{1}{2}(s^2-3s+2)\\
&= h(n,2k,k-1)+1  -\frac{1}{2}(s^2-3s+2).
\end{align*}
Thus, by (\ref{bound for 2-connected}), we have $s=2$ and $e(G)=h(n,2k,k-1)+1$.
Moreover, each vertex in $G-V(P)$ has degree $k-1$, implying
\begin{equation}\label{H'}
e(G[V(P)])=h(2k+1,2k,k-1)+1.
\end{equation}
Note that there are no edges between $A\cup B$ and $D$ and no edges inside $D$ (otherwise
there is a cycle on $2k+1$ vertices by Lemma~\ref{property for H}(\romannumeral3), a contradiction).
By (\ref{H'}), we have $e(A\cup B\cup D, C)=(k+2)(k-1)$.
Thus, $G[V(P)]$ contains a copy of $H_k(2)$ with all edges between $C$ and $D$, implying $G$ contains a copy of $H_k(M_2)$.
\medskip

\noindent{\bf Subcase 1.2.} The length of $P$ is $2k+t-1\geq 2k+1$, i.e.,  $F_k(t) \subseteq G[V(P)]$ for some $t$.

\medskip

Let $P_{2k+t}=x_1\ldots x_{2k+t}$ be a path on $2k +t$ vertices with $t\geq 2$. Let $A=\{x_1,\ldots,x_{k-1}\}$,
$B=\{x_{2k+t-k+2},\ldots,x_{2k+t}\}$, and $C=\{x_{k+1},\ldots,x_{2k+t-k}\}$.
Since $x_1$ is joint to $x_{2k+t-k+1}$, we have $t\leq k-1$; as otherwise,
$x_1P_{2k+t}x_{2k+t-k+1}x_1$ is a cycle on $k+t+1\geq 2k+1$ vertices, a contraction.
\begin{claim4}\label{AB}
$A$ and $B$ are components in $G-\{x_k,x_{2k+t-k+1}\}$.
\end{claim4}
\begin{proof}
Suppose that $p$ is a vertex in $G-A\cup B\cup \{x_k,x_{2k+t-k+1}\}$ adjacent to $y$ in $A\cup B$.
Since $G$ is 2-connected, there must be a path $Q_1$ connecting $p$ to $A\cup B\cup \{x_k,x_{2k+t-k+1}\}$.
Then, we can easily find a cycle on at least $2k+1$ vertices, since there
is an $(y,y^\prime)$-path on $2k$ vertices in $G[V(A\cup B\cup \{x_k,x_{2k+t-k+1}\})]$ for any $y^\prime \neq y$, a contradiction.
The proof the claim is complete.
\end{proof}
\begin{claim4}
There is no $(x_k,x_{2k+t-k+1})$-path on at least $k+2$ vertices in $G$.
\end{claim4}
\begin{proof}
Suppose that $P^\prime$ is an $(x_k,x_{2k+t-k+1})$-path on at least $k+2$ vertices in $G$.
By Claim \ref{AB}, $A$ is a component in $G-\{x_k,x_{2k+t-k+1}\}$ and can only be connected to $\{x_k,x_{2k+t-k+1}\}$.
If $A\cap V(P^\prime)\neq \emptyset$, then $V(P^\prime)\subseteq A$ and hence $|V(P^\prime)|\leq |A|+2= k+1$,
a contradiction. Hence $A\cap V(P^\prime)=\emptyset$.
Similarly we have $B\cap V(P^\prime)=\emptyset$.
Hence $V(P^\prime) \cap (A\cup B)=\emptyset$.
Then $x_kP^\prime x_{2k+t-k+1}Ax_k$ is a cycle on at least $2k+1$ vertices, a contradiction.
The proof is complete.
\end{proof}

The longest path starting from $x_k$ and ending at $x_{2k+t-k+1}$ in $G$ has at most $k+1$ vertices.
By Theorem \ref{Fan} and Proposition~\ref{claim9}, we have
$$e(G)\leq \frac{(k-2)(n-2)}{2}+2n-3<h(n,2k,k-1)\leq \max\{h(n,2k,k-1),h(n,2k+1,2)\},$$ a contraction to (\ref{bound for 2-connected}).
The proof of Case 1 is complete.

\medskip
\noindent{\bf Case 2.} $G^*$ contains $H_k(1)$ or $H_k(M_2)$ as a subgraph.
\medskip

By Lemma~\ref{property for H}(\romannumeral2), there is a path in $G^\ast$ on at
least $2k$ vertices starting from $x$ and ending at $y$ (clearly, this path does
not contain the adding edge $xy$, and hence is a path in $G$) in $H_k(1)$ or $H_k(M_2)$.
We may suppose that $N_{H}(x)\subseteq V(H_k(M_2))$ (or $V(H_k(1))$) and
$N_{H}(y)\subseteq V(H_k(M_2))$ (or $V(H_k(1))$), otherwise there is a
maximum path on at least $2k+1$ vertices starting and ending in $H$ and
we are also done by the previous proof of Case 1.
Since $\{x, y\}\not\subset C$ (otherwise $G$ contains $H_k(M_2)$ or $H_k(1)$ as a
subgraph), without loss of generality, we can assume that $x\in A\cup B\cup D$.

For $H_k(M_2)$, if $xy$ belongs to the embedded $M_2$, then since $|C|= k-1$ and
$N_H(x)\geq k$, $x$ is joint to a vertex of $(A\cup B\cup D)\setminus\{y\}$ in $G$;
if $xy$ does not belong to the embedded $M_2$, then $y\in C$ and as $N_H(x)\geq k$,
there is an edge in $G[A\cup B\cup D]$ that does not belong to $H_k(M_2)$.
In both of the above cases, by Lemma~\ref{property for H}(\romannumeral3), $G+xy$
contains a cycle on the $2k+1$ vertices, we are done by the previous proof of Case 1.
For $H_k(1)$, since all edges in $H_k(1)$ have a terminal vertex in $C$, we have $y\in C$.
Since $|C\setminus \{y\}|= k-1$ and $ N_H(x)\geq k$, $x$ is jointed to a vertex of $A\cup B\cup D$ in $G$.
By Lemma~\ref{property for H}(\romannumeral3) again, $G+xy$ contains a cycle on
$2k+1$ vertices, we are done by the previous proof of Case 1.
The proof of Case 2 is complete.

\medskip

\noindent{\bf Case 3.} $G^*$ contains $H_k(P_3)$ as a subgraph.

\medskip

Let $A=\{a_1,a_2,a_3\}$, $B$ and $C$ be the vertex sets of $H_k(P_3)$ (see its definition
at the beginning of this section).
We begin our proof of this case with the following claim.
\begin{claim4}\label{clai}
The vertices in $G^*-V(H_k(P_3))$ are independent and can only be adjacent to the
vertices in $B\cup \{a_2\}$. Moreover, the vertices in $G^*-A\cup B$ cannot be adjacent to both $a_2$ and some vertex in $B$.
\end{claim4}
\begin{proof}
If there is a vertex $p\in V(G^*)\setminus V(H_k(P_3))$ which is adjacent to $q\in (A\setminus \{a_2\})\cup C$,
then there is a path $P_0$ from $p$ to $q^\prime\in V(H_k(P_3))$ with $q^\prime\neq q$ (note that $G$ is 2-connected).
If $q^\prime q\in E(H_k(P_3))$; or $q^\prime=\{a_2\}$, $q\in C$; or $\{q^\prime,q\}=\{a_1,a_3\}$, by
Lemma \ref{H(2k+1)}(\romannumeral2), there is a path $P_1$ on the $2k$ vertices from $q$ to $q^\prime$ in $H_k(P_3)$.
Then $q^\prime P_1qpP_0q^\prime$ is a cycle of length at least $2k+1$ in $G^*$.
If $q^\prime,q\in C$; or $q^\prime\in A\setminus \{a_2\}$, $q\in C$; or $q\in A\setminus \{a_2\}$,
$q^\prime\in C$, by Lemma \ref{H(2k+1)}(\romannumeral1), we have $c(G^*)\geq 2k+2$.
In both of the above cases, there is a path in $G$ on at least $2k+1$ vertices that starts and ends
in $H$, and we are done by the previous proof of Case 1.
Hence, the vertices in $G^*-V(H_k(P_3))$ can only be adjacent to the vertices in $B\cup \{a_2\}$.

If $p\in G^*-V(H_k(P_3))$ is adjacent to $q\in B\cup \{a_2\}$ and not independent in
$G^*-V(H_k(P_3))$ (suppose that $p$ is adjacent to $p^\prime \in G^*-V(H_k(P_3))$),
then there is a path $P_0$ from $p^\prime$ to $q^\prime\in B\cup \{a_2\}$ with $q^\prime\neq q$ (note that $G$ is 2-connected).
By Lemma \ref{H(2k+1)}(\romannumeral4), there is a path $P_1$ on the $2k-1$ vertices from $q$ to $q^\prime$ in $H_k(P_3)$.
Then, $q^\prime P_1qpp^\prime P_0q^\prime$ is a cycle of length at least $2k+1$ in $G^*$.
Thus, we are done by the previous proof of Case 1.
Hence, the vertices in $G^*-V(H_k(P_3))$ are independent.

Assume that $z\in V(G)\setminus V(H_k(P_3))$ is adjacent to $a_2$ and $b_i\in B$,
since $G^*[B\cup C]$ contains a path $P_0$ on $2k-3$ vertices from $b_j\in B$ to
$b_i\in B$ ($i\neq j$), $a_1b_jP_0b_iza_2a_3$ is a path on $2k+1$ vertices.
Note that $a_1$ and $a_3$ are both adjacent to every vertex of $B\cup \{a_2\}$.
Thus, $G^*$ contains a copy of $H_k(1)$ and we are done in Case 2. The proof
of Claim~\ref{clai} is complete.
  \end{proof}

From  Claim~\ref{clai} and Lemma \ref{H(2k+1)}, we have
\begin{align*}
e(G^*)&=e(G^*[V(H_k(P_3))])+e(G^*[V(H_k(P_3)),V(G^*)-V(H_k(P_3))]) \\
&\leq h(2k+1,2k+1,k-1)+(n-2k-1)(k-1)     \\
&=h(n,2k+1,k-1).
\end{align*}
By (\ref{bound for 2-connected}), we have $e(G^*)>h(n,2k,k-1)+1=h(n,2k+1,k-1)-1$,
thus $e(G^*)=h(n,2k+1,k-1)$ and $G^*=H(n,2k+1,k-1)$ (each $N\in G^*-A\cup B$ is adjacent to all of $B\cup \{a_2\}$).
If $n=2k+1$, then $$e(G)=e(G^*)-1=h(n,2k+1,k-1)-1\leq h(2k+1,2k+1,2),$$
a contradiction to (\ref{bound for 2-connected}).
Hence $n\geq 2k+2$ and the subgraph induced by $A\cup B$ in $G^*=H(n,2k+1,k-1)$
is $K_{k+2}$ where there is at most one missing edge in $G$, thus we can easily find a $P_3$.
Moreover there is at most one missing edge between $B$ and $V(G)\setminus (A\cup B)$
in $G$ (note that $|V(G)\setminus (A\cup B)|\geq k$), which implies that $G$ contains
a copy of $H_k(P_3)$, a contradiction.
The proof of Case 3 is complete.

Hence we finish the proof of Claim~\ref{Hcomplete}.
\end{proof}

If $|V(H)|=\ell=2k$, then there is a cycle of length $2k+1$ (note that $G$ is 2-connected).
Thus by Claim~\ref{complete} we may assume $k+2\leq \ell\leq 2k-1$.
Apply to the graph $G$ the process of $\alpha$-disintegration with $\alpha=2k+1-\ell\leq k-1$. Let $H^\prime=H(G,2k+1-\ell)$.

Suppose that $H^\prime=H$. Since $k\ge 5$,
if $k+3\leq \ell\leq 2k-1$ or $2k+1\leq n\leq 2k+2$, we have $h(n,2k+1,2)>h(n,2k+1,k-2)$ and $h(n,2k+1,2)>h(n,2k,k-1)$. Then
\begin{align*}
e(G)&\leq {\ell \choose 2}+ (n-\ell)(2k+1-\ell)\\
 &=h(n, 2k+1,2k+1-l)\\
 &\leq\max\{h(n,2k+1,k-2),h(n,2k+1,2)\}\\
 &=h(n,2k+1,2)\\
 &=\max\{h(n,2k,k-1),h(n,2k+1,2)\},
\end{align*}
contradicting (\ref{bound for 2-connected}).

Thus, $|V(H)|=k+2$ and $n\geq 2k+3$.
By (\ref{bound for 2-connected}), we have
$$e(G)\geq h(n,2k,k-1)+1  = {k+2 \choose 2}+ (n-k-2)(k-1)-1.$$
Consider the $(k-1)$-disintegration of $G$, i.e. $H$.
We can delete at most one vertex such that the resulting graph
$F$ satisfies $\delta(F)=k-1$ and $\omega(F)=k+2$.
By Lemma \ref{Lemma:Yuan}, $c(F)\geq \min\{n-2,2k+1\}=2k+1$,
unless $F$ is $H(v(F),2k+1,k-1)$, implying that $F$ contains
$H_k(P_3)$ as a subgraph, which is a contradiction.

Therefore $H^\prime\neq H$, whence there exists a vertex $u \in H^\prime$ that is not joint to $v\in H$.
Thus, after adding $uv$, the obtained graph contains a cycle on at least $2k+1$ vertices,
or a copy of $H_k(1)$, $H_k(M_2)$, or $H_k(P_3)$.

If the obtained graph contains a cycle containing $uv$ on at least $2k+1$ vertices, there is
a path in $G$ on at least $2k+1$ vertices starting from $H$ and ending in $H^\prime$.
Let $xPy$ be a maximal such path with $x\in V(H)$ and $y\in V(H^\prime)$. (It is possible that $\{u,v\}\neq \{x,y\}$.)
Then, $d_P(x)\geq d_H(x)\geq \ell-1$ and $d_P(y)\geq d_{H^\prime}(y)\geq 2k+2-\ell$.
By Lemma~\ref{posa lemma}, we have $c(G)\geq 2k+1$, a contradiction.

If the obtained graph contains $H_k(1)$ ($H_k(M_2)$) as a subgraph, let $xy$ be the added edge.
By Lemma~\ref{property for H}(\romannumeral2), there is a path in $G$ on at least $2k$
vertices that start from $x$ in $H$ and end at $y$ in $H^\prime$.
We may suppose that $N_H(x)\subseteq V(H_k(1))$ (or $V(H_k(M_2))$) and $N_{H^\prime}(y)\subseteq V(H_k(1))$
(or $V(H_k(M_2))$),
otherwise there is a maximal such path with $x\in V(H)$ and $y\in V(H^\prime)$ on more than $2k$ vertices.
Then, similar to the last paragraph, by Lemma~\ref{posa lemma}, we have $c(G)\geq 2k+1$, a contradiction.
Furthermore, we have $V(H)\subseteq V(H_k(1))$ (or $V(H_k(M_2))$) by Claim \ref{Hcomplete}.
Since $|C|\leq k$ and $|H|=\ell\geq k+2$, there exist some $x^\prime \in (A\cup B\cup D)\cap H$
and $d_H(x^\prime)= \ell-1\geq k+1 >|C|$. Hence, there exists some $y^\prime \in (A\cup B\cup D)\cap H$
adjacent to $x^\prime \in (A\cup B\cup D)\cap H$. Thus, by Lemma~\ref{property for H}(\romannumeral3),
$G+xy$ contains a cycle of length $2k+1$.
This implies that there is a path on $2k+1$ vertices starting from $H$ and ending at $H^\prime$,
a contradiction by Lemma~\ref{posa lemma}.

If the obtained graph contains $H_k(P_3)$ as a subgraph, let $xy$ be the added edge. According to
Lemma~\ref{H(2k+1)}(\romannumeral2), there is a path in $G$ on at least $2k$ vertices that starts
from $x$ in $H$ and ends at $y$ in $H^\prime$, unless $xy$ is the edge between $a_2$ and $B$.
If $xy$ is the edge between $a_2$ and $B$, then $d_{H_k(P_3)}(a_1)=d_{H_k(P_3)}(a_2)=d_{H_k(P_3)}(a_3)=k$,
we have $\{a_1,a_2,a_3\}\subseteq H$. By Claim \ref{Hcomplete}, $a_1$ is joint to $a_3$.
Then by Lemma~\ref{H(2k+1)}(\romannumeral4), there is still a path in $G$ on at least $2k$ vertices
starting from $a_2$ in $H$ and ending in $B$.
Similarly as in the last paragraph, we may suppose that $N_H(x)\subseteq V(H_k(P_3))$, $N_{H^\prime}(y)\subseteq V(H_k(P_3))$
and $V(H)\subseteq V(H_k(P_3))$ by Claim \ref{Hcomplete}.
Since $|B\cup \{a_2\}|=k$ and $|H|=\ell\geq k+2$, there exist some $x^\prime \in (\{a_1,a_3\}\cup C)\cap H$
and $d_H(x^\prime)= \ell-1\geq k+1 >|B\cup\{a_2\}|$. Hence, there exists some $y^\prime \in (\{a_1,a_3\}\cup C)\cap H$
adjacent to $x^\prime \in (\{a_1,a_3\}\cup C)\cap H$.
By Lemma~\ref{H(2k+1)}, $G+xy$ contains a cycle of length $2k+1$.
Therefore, there is a path on $2k+1$ vertices starting from $H$ and ending at $H^\prime$,
a contradiction by Lemma~\ref{posa lemma}.

The proof of Lemma \ref{Lemma:Stablity-Kopylov2} is complete. \hfill $\square$

\end{document}